\newtheorem*{theorem*}{Theorem}
\newtheorem*{cor*}{Corollary}
\theoremstyle{plain}
\newtheorem{theorem}{Theorem}[section]
\newtheorem{cor}[theorem]{Corollary}
\newtheorem{prop}[theorem]{Proposition}
\newtheorem{lem}[theorem]{Lemma}
\theoremstyle{definition}
\newtheorem{rem}[theorem]{Remark}
\newcommand{\Hol}{\operatorname{Hol}}
\newcommand{\GL}{\operatorname{GL}}
\newcommand{\SO}{\operatorname{SO}}
\newcommand{\Or}{\operatorname{O}}
\newcommand{\SU}{\operatorname{SU}}
\newcommand{\Un}{\operatorname{U}}
\newcommand{\Spin}{\operatorname{Spin}}
\newcommand{\Ric}{\operatorname{Ric}}
\def\R{\mathcal{R}}
\def\P{\mathcal{P}}
\newcommand{\pr}{\operatorname{pr}}
\newcommand{\id}{\operatorname{id}}
\def\so{\mathfrak{so}}
\def\su{\mathfrak{su}}
\def\sp{\mathfrak{sp}}
\def\u{\mathfrak{u}}
\def\t{\mathfrak{t}}
\newcommand{\h}{\mathfrak{h}}
\newcommand{\hol}{\mathfrak{hol}}
\newcommand{\bt}{{\boldsymbol{\tau}}}
\newcommand{\Real}{\mathbb{R}}
\newcommand{\Co}{\mathbb{C}}
\newcommand{\Su}{\mathfrak{S}}
\newcommand{\1}{\mathbf{1}}
\newcommand{\tr}{\mathrm{tr}}
\newcommand{\ad}{\mathrm{ad}}
\newcommand{\Ad}{\mathrm{Ad}}
\newcommand{\p}{\mathfrak{p}}
\renewcommand{\k}{\mathfrak{k}}
\newcommand{\g}{\mathfrak{g}}
\newcommand{\Iso}{\mathrm{Iso}}
\newcommand{\D}{{\mathcal D}}
\newcommand{\E}{{E}}
\renewcommand{\d}{\mathrm{d}}
\renewcommand{\L}{{\mathcal L}}
\newcommand{\+}{\oplus}
\newcommand{\N}{N}
\newcommand{\bnab}{\overline{\nabla}}
\newcommand{\tnab}{\widetilde{\nabla}}
 \newcommand{\nbt}{\nabla^\bt}
 \newcommand{\Rbt}{R^\bt}
 \newcommand{\nbbt}{\overline{\nabla}^\bt}
  \newcommand{\nbW}{\overline{\nabla}}
  \newcommand{\Rbbt}{\overline{R}^\bt}
\begin{document}
	
	\title{Holonomy in pseudo-Hermitian geometry}
%	\thanks{\today}
\setlength{\footskip}{.8cm}	
	
	\author[Anton S. Galaev]{Anton S. Galaev}
	\thanks{Corresponding author Email: anton.galaev@uhk.cz}
	\address[Galaev]{University of Hradec Kr\'alov\'e, Faculty of Science, Department of Mathematics, Rokitansk\'eho 62, 500~03 Hradec Kr\'alov\'e, Czech Republic}
	\email{anton.galaev@uhk.cz}

	\author[Thomas Leistner]{Thomas Leistner}\address[Leistner]{School of Computer and Mathematical Sciences, University of Adelaide, SA~5005, Australia}\email{thomas.leistner@adelaide.edu.au}

	\author[Felipe Leitner]{Felipe Leitner}\address[Leitner]{Institut f\"ur Mathematik und Informatik, Walther-Rathenau-Str. 47, 17489 Greifswald, Germany}\email{felipe.leitner@uni-greifswald.de}

	\begin{abstract}
We study the holonomy that is associated to a sub-Riemannian  structure defined 
 on the kernel of a global contact form. This includes the holonomy of  Schouten's horizontal connection as well as of the adapted connection, both canonical invariants of the structure.  	Under a condition on the torsion of the structure, we show that they are either equal or that the former is a codimension one normal subgroup of the latter. Furthermore, we establish a close relation to Riemannian holonomy, which yields a complete holonomy classification in the torsion-free case. For the main result we focus on the special case of pseudo-Hermitian structures and give a classification of holonomy algebras for both the Schouten and the adapted connection. Based on this, we derive a classification of symmetric sub-Riemannian structures and of of those holonomy groups that admit parallel spinors. Finally we exhibit a relation between locally symmetric sub-Riemannian contact structures and locally homogeneous Riemannian structures.

\medskip
		
		{\bf Keywords}: holonomy; pseudo-Hermitian structures;  sub-Riemannian structures; Schouten connection; sub-symmetric spaces; parallel spinors

\medskip
		
		{\bf AMS Mathematics Subject Classification 2020:} 53C29; 53C17; 15A66 
		
		% \tableofcontents
		% 15A66 Clifford algebras, spinors
		% 53B30 Local differential geometry of Lorentz metrics, indefinite metrics
		% 53C29 issues of holonomy
		% 53C18 Conformal structures on manifolds
		% 53C30  	Differential geometry of homogeneous manifolds
		% 	53C17  	Sub-Riemannian geometry
		
	\end{abstract}

	\maketitle
	
\setcounter{tocdepth}{1}	
\tableofcontents

\section{Introduction}
The classification of holonomy groups of Riemannian manifolds by M.~Berger in \cite{berger55} is a classical result that has a many consequences and applications, see for example  \cite{Besse,Bryant92,bryant99,Joyce07}.
For CR~manifolds the situation is very different, as it is stated  in the well-known monograph  on the topic \cite[p. 330]{DT2006}:
\begin{quote}
	{\it ,,A systematic study of the (pseudo-Hermitian) holonomy of a CR manifold is still missing in the present-day mathematical literature.''}\end{quote}
The goal of this paper is to fill this gap by providing a classification of holonomy algebras of pseudo-Hermitian spaces. Let us describe its contents and its  results. 

In Section \ref{secHorCon}, we review basic facts on  contact sub-Riemannian manifolds $(M,\theta,g)$, which we defines as given by  a fixed contact form  $\theta$ on a smooth manifold $M$ and a is a sub-Riemannian metric  $g$ on the contact distribution $\D=\ker\theta$. An invariant of such a structure is the Lie derivative of $g$ along the Reeb vector field $\xi$, denoted by $\bt=\frac{1}{2}\mathcal L_\xi g$ and called the {\em sub-torsion} or {\em torsion} of $(M,\theta,g)$.
A vector field or a curve is called {\em horizontal} if it is tangent to $\D$.
The analogue for the Levi-Civita connection is the {\em Schouten connection} $\nabla$, which is a partial connection along horizontal vector fields.  Then we consider extended connections $\nabla^N$, i.e., connections on the vector bundle $\D$ over $M$ that arise from the Schouten connection and an endomorphism $N$ of $\D$. For our approach two extended connections will be crucial: the well-known {\em adapted connection} $\nabla^\bt$ and the lesser known {\em Wagner connection} $\nabla^W$,  which originates from \cite{Wagner41}. 
Both connections preserve the metric $g$ on $\D$, and we will write down the expressions for the curvature tensors of these connections. 
The importance of the Wagner connection for holonomy was realized by the first author  in \cite{GalHolK}, and it continues to provide a key tool in the current paper. 

In Section \ref{secHol}, we consider the related  holonomy groups. The holonomy group $\Hol(\nabla)$ of the Schouten connection $\nabla$ is defined as the group of parallel transports along horizontal loops,~\cite{FGR}. The holonomy groups $\Hol(\nabla^\bt)$ and $\Hol(\nabla^W)$ are defined, as usual, as the groups of parallel transports along all loops.
We recall the Ambrose--Singer type holonomy theorems from \cite{CGJK,FGR} for the group $\Hol(\nabla)$, and we stress that the results in \cite{CGJK,FGR} imply the equality   \[\Hol(\nabla^W)=\Hol(\nabla).\] 
Then we generalize the results from \cite{GalHolK}. There the first author considered simply connected {\em K-contact manifolds}, i.e., those with vanishing torsion, $\bt=0$, and proved 
that if the Reeb vector field is complete, then 
\begin{equation}
\label{dicho}
\text{either  $\Hol(\nabla)=\Hol(\nabla^\bt)$, or $\Hol(\nabla)\subset\Hol(\nabla^\bt)$ is a codimension-one normal subgroup,}
\end{equation} 
and that $\Hol(\nabla^\bt)$ is a  holonomy group of a Riemannian manifold.
As first generalisation of these results, here we show the following:
\begin{enumerate}[(a)]
	\item \label{a} The dichotomy~(\ref{dicho}) holds when the torsion is a Codazzi-tensor (a differential condition on $\bt$,  completeness of $\xi$ is not assumed anymore, 
	see Theorem~\ref{thHolnabl}).  
	\item \label{b} When $\bt$ is not Codazzi and
	$\hol(\nabla^\bt)$ is irreducible, it is the holonomy algebra of a Riemannian manifold or equal to $\sp(k)\+ \u(1)$ (see Proposition~\ref{propRieamHolor}, and note that the latter is not Berger algebra when $k\geq 2$). 
	%\acom{added:\\ if $k\geq 2$}
\end{enumerate}
In order to prove the first result, for any extended connection $\nabla^N$, we define a modified holonomy group $\widetilde\Hol(\nabla^N)$, which consists of parallel transports along loops $\gamma$  for which the integral of $\theta$ along $\gamma$ is zero. We prove that if the Reeb vector field $\xi$ is complete, then there is a dichotomy: either  $\widetilde\Hol(\nabla^N)=\Hol(\nabla^N)$, or $\widetilde\Hol(\nabla^N)\subset\Hol(\nabla^N)$ is a codimension-one Lie subgroup.  
This is then used to prove (\ref{a}) and (\ref{b}) and  we are able to drop the completeness assumption on $\xi$. 
The strengthened results (\ref{a}) and (\ref{b}) go beyond the K-contact case, but in particular {\em complete} the classification of the holonomy algebras $\hol(\nabla)$ of K-contact sub-Riemannian manifolds:
they are exhausted by the holonomy algebras of Riemannian manifolds and their codimension-one ideals. Thus, we are left with the holonomy classification problem for contact sub-Riemannian manifolds with $\bt\neq 0$. We solve this problem for the case of pseudo-Hermitian manifolds.

In Section \ref{secPsHerm}, we recall the definition and the main properties of {\em pseudo-Hermitian manifolds}. In particular, we show that in that case the adapted connection $\nabla^\bt$ coincides with the Tanaka--Webster connection, and $\bt$ coincides with the Tanaka--Webster torsion. In Section \ref{secpsHermHolClass}, we recall the definition of {\em contact sub-Riemannian symmetric spaces},  defined in \cite{FG}, and related notions.  They are examples of pseudo-Hermitian manifolds. In \cite{BFG,FG,FGR}, simply connected contact sub-Riemannian symmetric spaces were classified and  it was shown that they satisfy the dichotomy~(\ref{dicho}). In Section \ref{secLocsubsym}, we study locally symmetric
contact sub-Riemannian spaces. We show that these spaces may be characterized as contact sub-Riemannian manifolds for which the  Wagner connection $\nabla^W$ is a Riemannian  Ambrose--Singer connection, as in \cite{AmbroseSinger58} or \cite{tricerri-vanhecke83},  i.e., a homogeneous structure, which implies that the Riemannian metric $\theta^2+g$ on $M$ is locally homogeneous. 

Section \ref{SecClass} contains the main result of the paper: Theorem~\ref{ThClass} gives the classification of holonomy algebras of pseudo-Hermitian spaces with non-vanishing torsion. 
For a pseudo-Hermitian manifold of dimension $2m+1\geq 7$ with $\bt\neq 0$
there are the following possibilities for the  holonomy algebras $\hol(\nabla^{\bt})$ of the Tanaka--Webster connection and $\hol(\nabla)$ of the Schouten connection:
\begin{itemize}
	\item[ 1.] $\hol(\nabla^{\bt})=\u(m)$, and either $\hol(\nabla)=\u(m)$ or $\hol(\nabla)=\su(m)$;
	\item[ 2.] $\hol(\nabla^{\bt})=\hol(\nabla)=\su(m)$;
	\item[ 3.] $\hol(\nabla^{\bt})=\so(m)\oplus\so(2)$ and $\hol(\nabla)=\so(m)$; 
	\item[ 4.] $\hol(\nabla^{\bt})=\hol(\nabla)=\so(m)$.
\end{itemize}
Here $\so(m)\subset\su(m)$ is the subalgebra preserving a Lagrangian subspace of $\Real^{2m}$. Note that the classification of holonomy algebras of torsion-free sub-Riemannian contact structures, i.e.~with $\bt=0$,   was given in \cite{GalHolK} and includes the torsion-free pseudo-Hermitian case. We review  these results  in Section~\ref{secbt0}.

In Section \ref{SecCons}, we prove that in the above list, the holonomy 
$\hol(\nabla)=\so(m)$ corresponds to locally sub-symmetric spaces. Moreover, for a pseudo-Hermitian manifold with $\bt\neq 0$,  the condition $\nabla\bt=0$ is the equivalent condition for the space to be locally sub-symmetric. Next, a pseudo-Hermitian manifold with $\nabla\bt\neq 0$ is pseudo-Einstein if and only if  $\hol(\nabla)=\su(m)$. We give a construction of pseudo-Hermitian spaces with $\hol(\nabla)=\su(m)$ using a deformation of pseudoconvex hypersurfaces in $\Co^{m+1}$.

In Section \ref{secparaldtheta}  we consider contact sub-Riemannian spaces $(M,\theta,g)$ with $\nabla\d\theta =0$, which are a generalization of pseudo-Hermitian  spaces. We prove that if $\bt\neq 0$, then the condition $\nabla\d\theta =0$ forces $(M,\theta,g)$ to be  pseudo-Hermitian, after multiplication of $g$ by a constant.

In Section \ref{subsymholclass-sec}, we deduce the classification of simply connected sub-symmetric spaces from the classification of the holonomy algebras. Our approach simplifies the proof from \cite{BFG,FG,FGR}, since we use the information about the holonomy algebra and the structure of the curvature tensor.

%\acom{added}
In Section \ref{secSpinors}, we describe holonomy groups of simply connected pseudo-hermitian manifolds that admit parallel spinors.

\vskip0.3cm

{\bf Acknowledgements.}
A.G.~was supported by the project GF24-10031K of Czech Science Foundation (GA\v{C}R). A.G.~and T.L.~would like to thank Vicente Cort\'es and the University of Hamburg for their hospitality.

\section{Preliminaries on  sub-Riemannian contact structures}\label{secHorCon}% of contact sub-Riemannian manifolds}

\subsection{The Schouten connection and its curvature}Let $(M,\theta,g)$ a contact sub-Riemannian manifold, 
 i.e.,~$\theta$ is a 
 contact form on a smooth manifold $M$, and $g$ is a sub-Riemannian metric on the contact distribution $\D=\ker\theta$.  We assume that $\dim M=2m+1\geq 5$. 

 Denote by $\xi$ the corresponding Reeb vector field on $M$ defined by the conditions 
$$\theta(\xi)=1,\quad \iota_\xi \d\theta=0.$$  
These conditions imply $\L_\xi\d\theta =0$ and that $\L_\xi=[\xi, \,.\, ]$ maps sections of $\D$ to sections of $\D$.
In particular,  $\L_\xi g$ is a well-defined section of $\odot^2\D$. We define the symmetric bilinear form $\bt$ of $\D$ by
\begin{equation}\label{defA} \bt(X,Y)=\tfrac{1}{2}(\mathcal{L}_\xi g)(X,Y),\quad \text{for all } X,Y\in\D.\end{equation}
We denote by the same symbol $\bt$ the corresponding symmetric endomorphism field, which is also called the {\em sub-torsion}, see \cite{FG}.

Denote by $\left<\xi\right>$ the line bundle with fibres $\Real\cdot \xi_p$, 
the decomposition
$$TM=\D\oplus \left<\xi\right>,$$
%\quad \D'=\left<\xi\right>=\Real \cdot \xi.$$ \marginpar{A: it seems to me that it is not quire correct to write $\left<\xi\right>=\Real \cdot \xi$ since $\left<\xi\right>$ is a vector bundle, and $\Real \cdot \xi$ is a 1-dimensional vector space}
and the projections  \begin{equation}\label{projections}
\pi =\mathrm{Id}-\theta\otimes \xi :TM\longrightarrow \D,\quad \pi' =\theta \otimes \xi :TM\longrightarrow \left<\xi\right>.\end{equation} The projection $\pi$ satisfies
\begin{equation}
\label{Lpi}
\left[ \mathcal L_\xi, \pi \right]=0,
\end{equation}
because for all $X\in \Gamma(TM)$, we have
\[
\left[ \xi,\pi(X)\right]- \pi\left( \left[\xi,X\right]\right)
=
-\left( \d\theta(\xi,X) - X(\theta(\xi))\right)\xi=
0,
\]
as $\xi$ is a Reeb vector field.

{\it A horizontal  connection} or {\em partial connection} on the distribution $\D$ is a differential operator 
\begin{equation}\label{nablaD}
\nabla:\Gamma(\D) \longrightarrow \Gamma(\D^*\otimes \D)\end{equation} 
satisfying the Leibniz rule. A partial connection extends to  tensor products $\mathcal \D^{(r,s)}=\otimes^r \D^*\otimes \otimes^s\D$ of $\D$ in a natural way,
\[\nabla: \Gamma ( \mathcal \D^{(r,s)})\longrightarrow\Gamma(\mathcal \D^{(r+1,s)}).\]
The {\em Schouten connection} is a partial connection that is the analogue of the Levi-Civita connection, i.e.,~it is the unique partial connection satisfying the conditions 
\begin{eqnarray}
\label{metriccondit}
\nabla g&=&0,
\\
\label{torsioncondit}\nabla_X Y-\nabla_Y X-\pi[X,Y]&=&0, \quad\text{for all $X,Y\in \Gamma(\D)$.}
\end{eqnarray} It is defined by the Koszul-type formula
\begin{equation}\label{SchoutenCon}
2g(\nabla_X Y,Z)= Xg(Y,Z) +Yg(Z,X) - Zg(X,Y)+ g(\pi[X,Y],Z)- g(\pi[Y,Z],X)- g(\pi[X,Z],Y),
\end{equation} where $X,Y,Z\in\Gamma(\D)$.
The curvature tensor of the Schouten connection $\nabla$ was defined by Schouten
in the following way,
\begin{equation}\label{Schouten}
R(X,Y)Z=\nabla_X\nabla_Y Z-\nabla_Y\nabla_X Z-\nabla_{\pi[X,Y]}Z-\pi\left([\pi' ([X,Y]),Z]\right),\quad \text{for all } X,Y,Z\in\D.
\end{equation}
It is straightforward to check that $R$ is a section of $\Lambda^2\D^*\otimes (\mathrm{End}(\D))$, but one can show more. 
Using that  $\pi'([X,Y])=-\d \theta(X,Y)\xi$, we get
\begin{equation}\label{Schouten2}
R(X,Y)Z=\nabla_X\nabla_Y Z-\nabla_Y\nabla_X Z-\nabla_{\pi[X,Y]}Z
+\d\theta(X,Y)[\xi,Z],
\end{equation}
so that with (\ref{metriccondit}) we have that 
\begin{equation}
\label{SCTsym}
g(R(X,Y)U,V)+g(R(X,Y)V,U)=-2\d\theta(X,Y) \bt(U,V).\end{equation}
This shows that the Schouten curvature tensor $R$ is in fact a section of $\Lambda^2\D^*\otimes \left(\so(\D)\+ \langle \bt\rangle \right)$, where we define the vector bundle
\[\so(\D)=\{A\in \mathrm{End}(\D) \mid g(AX,Y) +g(X,AY)=0\ \text{for all } X,Y\in \D\}.\]
It also satisfies the Bianchi identity 
\begin{equation}\label{BianchiSch}
\underset{X Y Z}{\Su} R(X,Y)Z=0,  \quad\text{for all } X,Y,Z\in\D,
\end{equation}
where $\underset{X Y Z}{\Su}$ denotes the  sum over cyclic permutations of $X, Y$ and $ Z$.

\subsection{Extended connections and the adapted connection} Let $\nabla $ be the Schouten connection on $(M,\theta,g)$.
An {\it extended connection} is a connection on the vector bundle $\D\to M$, 
$$\nabla^\D:\Gamma(\D)\longrightarrow  \Gamma(T^*M\otimes \D)$$ such that $$\nabla^\D_XY=\nabla_XY,\quad  \text{ for all $X,Y\in\Gamma(\D)$.}$$   
An arbitrary extended connection $\nabla^\D$  defines a unique  field $ \N$ of endomorphisms of $\D$ by 
\[\N(X)=\nabla^\D_\xi X-[\xi,X ].\] 
This is the analogue of the well-known {\em Nomizu operator}.
Hence,  for an endomorphism $\N $ of $\D$ we denote  by $\nabla^\N$ the  extended connection  
that is given by
\begin{equation}\label{nablaN} 
\nabla^{\N}_\xi X=[\xi,X]+\N(X),\quad \nabla^{\N}_XY=\nabla_XY,\quad\text{for all } X,Y\in\D,
\end{equation} 
and every extended connection is of this form.
The curvature tensor of the connection $\nabla^{\N}$ is given by
\begin{eqnarray}
\label{RNXY} R^\N (X,Y)&=&R(X,Y)+\d\theta(X,Y)\N ,\\
\label{RNxiX}
R^\N(\xi,X)&=&(\mathcal{L}_\xi\nabla)_X-\nabla_X\N,
\end{eqnarray}
and it holds that
\begin{equation}
\label{RxiXYYX}
R^\N(\xi,X)Y-R^\N(\xi, Y)X=-(\nabla_X\N)Y+(\nabla_Y\N) X,\end{equation}
where $X,Y,Z\in \D$ and $R$ is the Schouten curvature tensor. The first two identities follow from direct computation using~(\ref{projections}) and~(\ref{Schouten2}), see also~\cite{Papa2016}. 
%\marginpar{A: it is enogh to left the reference \cite{Papa2016}}  
The last equality follows from the Jacobi identity for vector fields and the identity~(\ref{Lpi}).
%\[
%(\mathcal{L}_\xi\nabla)_XY- (\mathcal{L}_\xi\nabla)_YX
%=
%\left[ \xi,\pi([X,Y])\right)- \pi\left( \left[\xi,[X,Y]\right]\right)
%=-\d\theta(\xi,[X,Y]) +[X,Y](\theta(\xi))
%0,\]

It is straightforward to check that an extended connection $\nabla^\N$ is compatible with the sub-Riemannian metric $g$, i.e.,~$\nabla^\N g=0$, if and only if the symmetric part of $\N$ is equal to $\frac12 \L_\xi g$, i.e.,
\begin{equation}\label{defA1} 
g(\N(X),Y)+g(X,\N(Y))=(\mathcal{L}_\xi g)(X,Y) =2\bt(X,Y),\quad \text{for all } X,Y\in\D.\end{equation}
This yields the definition of the 
 {\em adapted connection}, which is defined as the extended connection $\nabla^\bt$, i.e.,~the extended connection for which the Nomizu operator is given by \[\N=\bt=\tfrac12\L_\xi g,\] see, e.g., \cite{FG,FGR}. It is compatible with the metric by definition.

The curvature tensor $R^\bt$ of the adapted connection $\nabla^\bt$  is fully determined by $R$, $\theta$ and $\bt$ via~(\ref{RNXY}) and~the following.
\begin{lem}\label{lemRAxi} The curvature tensor of the adapted connection $\nabla^\bt$  satisfies
	$$g(R^\bt(\xi,X)Y,Z)=g((\nabla_Y\bt)X,Z)-g((\nabla_Z\bt)X,Y),\quad\text{for all } X,Y,Z\in\D.$$
	In particular,
	\begin{equation}\label{BianchiRxi}
\underset{X Y Z}{\Su} \ g(R^\bt(\xi ,X)Y,Z)=0,  \quad\text{for all } X,Y,Z\in\D.
\end{equation} 
	
\end{lem}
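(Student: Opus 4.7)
The plan is to combine two facts about the trilinear form $A(X,Y,Z) := g(R^\bt(\xi,X)Y,Z)$ on $\D$. First, since $\nabla^\bt g = 0$, the endomorphism $R^\bt(\xi,X)$ is $g$-skew on $\D$, so $A$ is skew in its last two arguments. Second, pairing the already-proved identity (\ref{RxiXYYX}), specialized to $\N = \bt$, with $Z$ yields
$$A(X,Y,Z) - A(Y,X,Z) = T(Y,X,Z) - T(X,Y,Z),$$
where $T(X,Y,Z) := g((\nabla_X\bt)Y,Z)$. Because $\bt$ is a $g$-symmetric endomorphism field on $\D$, so is $\nabla_X\bt$ for every $X\in\D$, and therefore $T$ is symmetric in its last two arguments.

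I would then check that the candidate right-hand side, $B(X,Y,Z) := T(Y,X,Z) - T(Z,X,Y)$, shares both structural properties of $A$: it is manifestly skew in $(Y,Z)$, and using the symmetry of $T$ in its last two slots a one-line computation gives $B(X,Y,Z) - B(Y,X,Z) = T(Y,X,Z) - T(X,Y,Z)$. Hence $C := A - B$ is a trilinear form on $\D$ that is skew in $(Y,Z)$ and symmetric in $(X,Y)$.

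The key step is the standard Koszul-type uniqueness: any such $C$ must vanish, via the six-term cyclic chain
$$C(X,Y,Z) = C(Y,X,Z) = -C(Y,Z,X) = -C(Z,Y,X) = C(Z,X,Y) = C(X,Z,Y) = -C(X,Y,Z),$$
alternating the symmetry in the first two slots and the skew-symmetry in the last two. This forces $A = B$, which is the claimed formula. The Bianchi identity (\ref{BianchiRxi}) then follows as an immediate corollary: cycling the proved expression over $(X,Y,Z)$ produces six terms that cancel pairwise using the symmetry of $T$ in its last two arguments.

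I do not anticipate a substantive obstacle; the whole statement is a Koszul-style rearrangement built on the already-established identity (\ref{RxiXYYX}) and metric compatibility of the adapted connection.
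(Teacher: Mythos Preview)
Your proof is correct and is essentially the same argument as the paper's. The paper defines the difference $P=A-B$ (your $C$), observes it lies in $\D^*\otimes\so(\D)$ and is symmetric in its first two arguments, and then concludes $P=0$ by remarking that the skew-symmetrisation map $\D^*\otimes\so(\D)\to\Lambda^2\D^*\otimes\D$ is an isomorphism; your six-term cyclic chain is precisely the explicit verification of that last fact.
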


\begin{proof}
For any extended connection $\nabla^\N$ that is compatible with $g$, $R^\N(\xi,.)$ is a section of  $\D^*\otimes \so(\D)\cong \D^*\otimes \Lambda^2\D^*$, 
%\marginpar{A: I suggest to use $\cong$ for an isomorphism} 
and so is the tensor field defined by
	$$P(X,Y,Z):=g(R^\N(\xi,X)Y,Z)-g((\nabla_Y\N)X,Z)+g((\nabla_Z\N)X,Y).$$
	Then from~(\ref{RxiXYYX}) we get that
	\[P(X,Y,Z)-P(Y,X,Z)=(\nabla_Z\N)(X,Y)- (\nabla_Z\N)(Y,X).\]
For the adapted connection $\N=\bt$ is symmetric, so that 
	 $P$ is in the kernel 
of the skew symmetrisation in the first two components
$ \D^*\otimes \so(\D) \to \Lambda^2\D^*\otimes \D$. This  however  is an isomorphism, so that  $P=0$ and the statement follows.
 \end{proof}
 We summarize  the algebraic properties of the curvature of the adapted connection. For this we define the  vector  bundle of algebraic curvature tensors
 \[
 \R(\so(\D))=
 \left\{Q\in \Lambda^2\D^*\otimes \so(\D)\mid 
\underset{X Y Z}{\Su}Q(X,Y)Z  =0,\   \text{for all } X,Y,Z\in\D\right\},\]
and the {\em weak curvature tensors}, see \cite{Leistner},
\[
 \P(\so(\D))=
 \left\{P\in \D^*\otimes \so(\D)\mid 
\underset{X Y Z}{\Su}g( P(X)Y,Z)  =0,\   \text{for all } X,Y,Z\in\D\right\}.\]
Since every $Q\in \R(\h)$ satisfies pairwise symmetry, $g( Q(X,Y)Z,U)=g( Q(Z,U)X,Y)$, we have a $\so(\D)$-invariant homomorphism that sends 
 $R\in\R(\so(\D))$ and  $X\in \D$ to  $R(X,.)\in \P(\so(\D))$. 
Returning to the curvature $R^\bt$ of the adapted connection, from Lemma~\ref{lemRAxi} we get that
\[P^\bt:=R^\bt(\xi,\cdot)\in \P(\so(\D)).\] On the other hand, for 
\[Q^\bt:=R^\bt |_{\Lambda^2\D} \in \Lambda^2\D^*\otimes \so(\D)\]
by~(\ref{RNXY}) we only get the following Bianchi identity
\begin{equation}\label{BianchiTWnew}
\underset{X Y Z}{\Su} Q^\bt(X,Y)Z=\underset{X Y Z}{\Su} \d\theta(X,Y)\bt (Z),  \quad\text{for all } X,Y,Z\in\D.
\end{equation}
Hence, instead of pairwise symmetry we have
\begin{equation}
\label{pairwise}
g(Q^\bt(X,Y)U,V)-g(Q^\bt (U,V)X,Y) =
\underset{U,V}{\Lambda}\left( \d\theta(Y,U)\bt(V,X)- 
\d\theta(X,U)\bt(V,Y) 
\right),
\end{equation}

where $\underset{X,Y}{\Lambda}$ denotes the skew symmetrisation of the tensor with respect to $X$ and $Y$ (without a normalizing factor of $1/2$). 
Indeed, using abstract indices, we have
\[
Q^\bt_{[abc]d}
+
Q^\bt_{[bcd]a}
+
Q^\bt_{[cda]b}
+
Q^\bt_{[dab]c}
=\tfrac{2}{3}\left( Q^\bt_{bdac}-Q^\bt_{acbd}\right)
\]
where the brackets denote skew symmetrisation (with normalizing factor).
On the other hand, by~(\ref{BianchiTWnew}), we have that
\[
Q^\bt_{[abc]d}
+
Q^\bt_{[bcd]a}
+
Q^\bt_{[cda]b}
+
Q^\bt_{[dab]c}
=
\tfrac{4}{3}\left( \d\theta_{a[b}\bt_{d]c}-\d\theta_{c[b}\bt_{d]a}\right),
\]
so that
\[
Q^\bt_{abcd}-Q^\bt_{cdab}=2 \left(\d\theta_{b[c}\bt_{d]a}- \d\theta_{a[c}\bt_{d]b}\right)
,\]
 which confirms formula~(\ref{pairwise}).

 \subsection{The Wagner connection}
Before we introduce another extended connection, we recall some algebraic preliminaries.
For one-forms $\sigma, \mu\in \D^*$ we define $\sigma\wedge \mu=\sigma\otimes \mu-\mu\otimes \sigma\in \Lambda^2\D^*$, and 
for vectors $X,Y\in\D$, we define the bivector
$X\wedge Y\in \Lambda^2 \D$ as 
\[X\wedge Y=X\otimes Y-Y\otimes X.\]
%\tcom{times $\frac12$ perhaps?}
For a 2-form 
$\omega$ on $\D$ with values in a vector bundle over $M$, we set $$\omega(X\wedge Y)=2\omega(X,Y).$$
This allows us to consider the value $\omega(\alpha)$ of the 2-form $\omega$ on any bivector $\alpha\in \Lambda^2 \D$.

Given a non-degenerate $2$-form $\omega\in \Lambda^2\D^*$ we can define the bi-vector  $\omega^{-1}\in \Lambda^2\D$ by the following convention. If $(e_a)_{a=1}^{2m}$ is a basis for $\D$ at a point, we denote the dual basis of $\D^* $ by $e^a$, so that $\omega=\omega_{ab}e^a\wedge e^b$, and define 
\[\omega^{-1}=\hat\omega^{ab}e_a\wedge e_b,\quad\text{ where $\omega_{ac}\hat\omega^{cb}=\delta_a{}^b$.} \]
%
%Moreover, the sub-Riemannian metric provides us with the isomorphism \[\D\ni X\longmapsto X^\flat=g(X,.)\in \D^*,\] with inverse by denoted $\sharp$, that extends to an isomorphism 
%\[\Lambda^2 \D^*\ni \omega\mapsto \omega^{-1}\in \Lambda^2\D,\]
%which is given by 
%\[\omega(X\wedge Y)=X^\flat\wedge Y^\flat(\omega^{-1})=\omega^{-1}(X^\flat, Y^\flat),\]
This implies that  \[\omega(\omega^{-1})=\omega_{ab}\hat\omega^{ab}=-2\mathrm{rk}(\D)=-4m.\] In particular, we have 
\[\Lambda^2\D=\ker(\omega)\+\langle \omega^{-1}\rangle.\]
With this notation, for any extended connection $\nabla^\N$ that is compatible with  $g$, we can compare its curvature to the  curvature tensor of the Schouten connection. From
 \eqref{RNXY}, we obtain
% we see that the Schouten curvature tensor $R$ is a map from $\Gamma(\Lambda^2 D)$ to $\so(D)\oplus\left<A\right>$. Moreover, 
 \begin{align}R(\alpha)&=R^\N(\alpha)\in \so(\D),\quad  \text{ for all  $\alpha\in \Lambda^2\D$ such that $ \d\theta(\alpha)=0$},\\
 \label{preWagner}
R((\d\theta)^{-1})&=R^\N((\d\theta)^{-1})+4m\N.
\end{align}

\begin{prop}
There is a unique extended connection $\nabla^\N$ that is compatible with $g$ and whose curvature satisfies 
 \begin{align}
 \label{Wagner2}
 R^\N ((\d\theta)^{-1})&=0.
\end{align}
This connection is called {\em Wagner connection} after \cite{Wagner41}, and we denote it by $\nabla^W$.
\end{prop}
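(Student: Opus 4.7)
The strategy is to use the already established formula (\ref{preWagner}) to see that the curvature condition (\ref{Wagner2}) uniquely prescribes the Nomizu operator $\N$, and then to verify that the resulting $\N$ automatically satisfies $g$-compatibility, so that uniqueness and existence both collapse to a short algebraic check.

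First, from (\ref{preWagner}) the condition $R^\N((\d\theta)^{-1}) = 0$ is equivalent to
\[
\N \;=\; \tfrac{1}{4m}\, R((\d\theta)^{-1}),
\]
which already determines $\N$, and hence the extended connection $\nabla^\N$, uniquely among all endomorphism fields of $\D$, \emph{irrespective} of $g$-compatibility. The uniqueness assertion of the proposition therefore drops out immediately, and the entire content of the statement is that this specific $\N$ is automatically $g$-compatible.

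To check existence I would then verify the symmetry condition (\ref{defA1}), namely that the symmetric part of the above $\N$ with respect to $g$ is $\bt$. The key input is identity (\ref{SCTsym}): for each $X,Y\in\D$ the endomorphism $R(X,Y)\in\mathrm{End}(\D)$ splits into a skew part lying in $\so(\D)$ and a symmetric part equal to $-\d\theta(X,Y)\,\bt$. Since the contraction $\alpha\mapsto R(\alpha)$ is linear in $\alpha\in\Lambda^2\D$, evaluating on $(\d\theta)^{-1}$ leaves the skew part in $\so(\D)$, while the symmetric part becomes
\[
-\,\d\theta\bigl((\d\theta)^{-1}\bigr)\,\bt \;=\; 4m\,\bt,
\]
using the normalisation $\d\theta((\d\theta)^{-1})=-4m$ from the preceding paragraph. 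Dividing by $4m$ therefore produces exactly $\bt$ as the symmetric part of $\N$, so (\ref{defA1}) holds and $\nabla^\N$ is $g$-compatible.

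There is essentially no obstacle: the proposition reduces to combining identities (\ref{RNXY}) and (\ref{SCTsym}) with the contraction conventions for $(\d\theta)^{-1}$. The only detail requiring care is the constant $4m$, which is forced by the chosen conventions $\omega(X\wedge Y)=2\omega(X,Y)$ and $\omega_{ac}\hat\omega^{cb}=\delta_a{}^b$, and by $\mathrm{rk}(\D)=2m$.
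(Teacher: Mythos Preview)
Your proof is correct and essentially identical to the paper's: both set $\N=\tfrac{1}{4m}R((\d\theta)^{-1})$ via (\ref{preWagner}), verify $g$-compatibility from (\ref{SCTsym}) and the contraction $\d\theta((\d\theta)^{-1})=-4m$, and conclude uniqueness from the fact that $\N$ determines the extended connection. The only cosmetic difference is ordering---you establish uniqueness first and then existence, whereas the paper does the reverse---and your remark that (\ref{preWagner}) holds ``irrespective of $g$-compatibility'' is justified since it follows directly from (\ref{RNXY}), which is stated for arbitrary extended connections.
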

 \begin{proof}
For the existence of this connection, set
\begin{equation}\label{NWagner} \N=\N^W:=\tfrac{1}{4m}R((\d\theta)^{-1}),\end{equation}
and define the extended connection $\nabla^W:=\nabla^\N$.
From (\ref{SCTsym}) we get that
\[\N(X,Y)+\N(Y,X)= \tfrac{1}{4m}g\left(R((\d\theta)^{-1})X,Y)+ g(R((\d\theta)^{-1})Y,X\right)= - \tfrac{1}{2m}\d\theta(\theta^{-1})\bt=2\bt,
\]
so that $\nabla^W$ is compatible with the metric. From (\ref{preWagner}) we see that 
(\ref{Wagner2}) is satisfied. Since every extended connection is uniquely determined by $\N$, the statement follows.
\end{proof}
Let $\nabla^W$ be the Wagner connection defined by  $\N^W$ as in (\ref{NWagner}) and let
$$\N^W=\bt+C,$$ be the decomposition into the symmetric and skew-symmetric part, with 
\[C=\tfrac{1}{4m} R^\bt(\d\theta^{-1})=
\tfrac{1}{4m} R(\d\theta^{-1})- \bt.
\]
so that
\[\nabla^W_\xi X=\nabla^\bt_\xi(X)+C(X).\]
By (\ref{RNXY}) and (\ref{RNxiX}), the curvature of the Wagner and adapted connections are related as
\begin{align}
\label{RWtxy}
R^W(X,Y)&=R^\bt(X,Y)+\d\theta (X,Y)C,\\
R^W(\xi,X)&=R^\bt(\xi,X)-\nabla_XC,\label{RWRAC}
\end{align}
where $X,Y\in\D $. 
\begin{rem}
To be precise, in \cite{FG,FGR} the authors work with  affine connections, i.e.,~connections on $TM$ instead of  $\D$. Their affine connections 
$\overline{\nabla}^N$ can be obtained from our extended connection  $\nabla^N$ by requiring $\overline{\nabla}^N\xi=0$. The torsion of this connection is then given as 
\[T(X,Y)= \nabla^\bt_XY-\nabla^\bt_YX-[X,Y]=\d\theta(X,Y)\xi,\quad T(\xi,X)=N(X).\]
This applies in particular to $N=\bt$, hence the name {\em sub-torsion} for $\bt$. It also applies to the Wagner connection, and we will come back to this in Section~\ref{secLocsubsym}. Note there is a strong interest to holonomy of connections with torsion, see the survey \cite{A10}.

Note that in general the Wagner connection is different from the  {\em special connection} $\tilde\nabla$ in \cite[Section 2.2]{FGR}. The latter is defined by the following condition condition for its curvature $\tilde{R}$: if  $\alpha =\sum_i \lambda_iX_i\wedge Y_i$,  where $(X_i,Y_i)$ is a local orthonormal frame of $\D$ such that $\d\theta (X_i,Y_j)=\lambda_i\delta_{ij}$, then $\tilde{R}(\alpha)=0$.
\end{rem}

\section{Holonomy}\label{secHol}
In this section, let $(M,\theta,g)$ be a contact sub-Riemannian manifold and $\nabla$ be the Schouten connection. For extended connections we  use the notation of the previous section.
 A piece-wise smooth curve in $M$ is called {\it horizontal} if it is tangent to $\D$.

\subsection{Holonomy of the Schouten connection}
The Schouten connection $\nabla$ defines a parallel transport along any  horizontal curve $\gamma:[a,b]\to M$,  
\begin{equation}\label{tau}\tau_\gamma: \D_{\gamma(a)}\to \D_{\gamma(b)}.\end{equation} 
Let $x\in M$. The {\it (horizontal) holonomy group} of the Schouten connection $\nabla$ at the point $x\in M$ is the group of parallel transports along piece-wise smooth horizontal loops at the point $x\in M$. We denote this group by $\Hol_x(\nabla)$.  The holonomy group $\Hol_x(\nabla)$ is a Lie subgroup of $\Or (\D_x,g_x)$.

The {\it restricted holonomy group} $\Hol^0_x(\nabla)$ is the subgroup of 
$\Hol_x(\nabla)$ corresponding to contractable loops. The restricted holonomy group $\Hol^0_x(\nabla)$ is the identity component of $\Hol_x(\nabla)$, \cite{CGJK,FGR,HMCK}, and 
the corresponding Lie subalgebra of $\so (\D_x)$ is called {\it the holonomy algebra} and is denoted by $\hol_x(\nabla)$. The parallel transport along a horizontal curve defines the isomorphism of the holonomy groups and algebras at the initial and the endpoint of the curve. Since $\D$ is a contact distribution, the Chow--Rashevskii theorem ensures that any two points in $M$ are connected by a horizontal curve, so the holonomy is independent of the point up to conjugation in $\mathrm{GL}(n,\Real)$.
%\tcom{Question}{\tt Doesn't this require that we can get from any point to any other by a horizontal curve? I guess this holds because $\D$ is contact, but perhaps we need a reference here.}
%\marginpar{A: yes, since $\D$ is contact, any two points may be connected by a horizontal curve, I do not know the right reference.}

The following are versions of the Ambrose--Singer holonomy Theorem for the horizontal connection.
For this define 
\[\ker(\d\theta):=\{\alpha \in \Lambda^2 \D\mid \d\theta(\alpha)=0\},\]
the space of Legendrian bi-vectors.  
%\acom{above was used $\wedge$, here is used $\Lambda$}
First we have  \cite[Corollary 2.4]{FGR}, see also \cite{CGJK}, the following.

\begin{theorem}\label{ThAS1} The Lie algebra $\hol_x(\nabla)$ is spanned by 
	the following endomorphisms of $\D_x$,
	$$\tau_\gamma^{-1}\circ R(\alpha)\circ\tau_\gamma,$$
	where $\gamma:[a,b]\to M$ is an arbitrary piece-wise smooth horizontal curve starting at $x$ and $\alpha\in\ker(\d\theta)_{\gamma(b)}$.
\end{theorem}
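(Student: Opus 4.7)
The plan is to adapt the classical Ambrose--Singer argument to the horizontal setting, proving both inclusions between $\hol_x(\nabla)$ and the linear span $\mathfrak{h}'_x$ of the prescribed endomorphisms $\tau_\gamma^{-1}\circ R(\alpha)\circ\tau_\gamma$. Throughout, the Chow--Rashevskii theorem is used in the background to guarantee that any two points are connected by horizontal curves, which implies that the holonomy at different base points is conjugate via $\tau_\gamma$ for some horizontal $\gamma$.

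For the inclusion $\mathfrak{h}'_x \subseteq \hol_x(\nabla)$, it suffices (by the conjugation property of $\tau_\gamma$) to show that for any $y\in M$ and any decomposable bivector $\alpha = X\wedge Y \in \ker(\d\theta)_y$, the endomorphism $R(\alpha) = 2R(X,Y)$ lies in $\hol_y(\nabla)$. Extend $X,Y$ to local horizontal vector fields near $y$ and build the usual small coordinate square of side $\varepsilon$, flowing successively along $X,Y,-X,-Y$. The closure defect is of order $\varepsilon^2[X,Y]$, and the key point is that the condition $\d\theta(X,Y)=0$ forces the Lie bracket $[X,Y] = \pi[X,Y]-\d\theta(X,Y)\xi$ to be horizontal, so the defect can be closed off by a horizontal correction path of length $O(\varepsilon^2)$. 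Standard variation-of-parallel-transport estimates then give $\tau_{\text{loop}} = \mathrm{Id} + \varepsilon^2 R(X,Y) + O(\varepsilon^3)$, whence $R(\alpha) \in \hol_y(\nabla)$.

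For the reverse inclusion $\hol_x(\nabla) \subseteq \mathfrak{h}'_x$, I would follow the Ambrose--Singer blueprint. Consider the orthonormal frame bundle of $(\D,g)$ and its subset $P'$ obtained by horizontal $\nabla$-parallel transport from a fixed frame at $x$; it is a reduction of structure group to $\Hol_x(\nabla)$. By the construction of $\mathfrak{h}'_x$, this Lie algebra is invariant under the adjoint action of $\Hol_x(\nabla)$ (any further horizontal loop at $x$ only relabels the curve $\gamma$), so it corresponds to a $\Hol_x(\nabla)$-invariant subbundle of the vertical distribution on $P'$. Combining the horizontal lifts of $\D$ with these vertical directions yields a distribution whose brackets are controlled by the curvature on horizontal bivectors, i.e., by elements in $\mathfrak{h}'_x$ --- this is where the Bianchi identity~(\ref{BianchiSch}) and the restriction $\alpha\in\ker(\d\theta)$ are crucial, since only Legendrian bivectors yield closable horizontal loops. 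A sub-Riemannian version of the Frobenius/Ambrose--Singer argument, as developed in~\cite{CGJK,FGR}, then produces an integral submanifold of $P'$ over a neighbourhood of $x$ whose structure algebra is exactly $\mathfrak{h}'_x$; extending by Chow--Rashevskii yields $\hol_x(\nabla)\subseteq \mathfrak{h}'_x$.

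The main obstacle is the reverse inclusion: in the non-integrable setting one cannot invoke the classical Frobenius theorem directly, since horizontal loops cannot in general be filled by surfaces tangent to $\D$. The correct substitute must track how the failure of integrability, encoded by $\d\theta$, interacts with the closure of horizontal loops; this is precisely what forces the restriction to $\alpha\in \ker(\d\theta)$, as any bivector with $\d\theta(\alpha)\ne 0$ would produce a vertical defect that cannot be compensated inside the class of horizontal loops. The delicate technical work is therefore to justify that Legendrian bivectors already suffice to generate the entire horizontal holonomy; the cited results~\cite[Corollary 2.4]{FGR} and~\cite{CGJK} provide the required sub-Riemannian Ambrose--Singer machinery, and I would invoke them at this point rather than reprove them.
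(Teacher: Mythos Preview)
The paper does not give its own proof of this statement: it is quoted as a known result, attributed to \cite[Corollary 2.4]{FGR} (see also \cite{CGJK}), and used as input for the rest of the paper. Your proposal ultimately does the same thing---after sketching the two inclusions you explicitly invoke \cite[Corollary 2.4]{FGR} and \cite{CGJK} for the hard direction---so in that sense your approach and the paper's coincide.

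That said, your sketch of the ``easy'' inclusion $\mathfrak{h}'_x \subseteq \hol_x(\nabla)$ has a genuine gap. You argue that for a decomposable Legendrian bivector $\alpha = X\wedge Y$ the condition $\d\theta(X,Y)=0$ makes $[X,Y]$ horizontal, so the small-square loop can be closed horizontally. But a general $\alpha\in\ker(\d\theta)_y$ is not decomposable, and a sum of decomposable Legendrian bivectors need not have Legendrian summands (indeed, $\ker(\d\theta)$ has codimension one in $\Lambda^2\D$, while decomposable Legendrian bivectors form a much smaller set). So the linearity step ``it suffices to treat decomposable $\alpha$'' is not justified as stated. Moreover, even for decomposable Legendrian $\alpha=X\wedge Y$, the extension of $X,Y$ to local horizontal fields will not in general satisfy $\d\theta(X,Y)\equiv 0$ in a neighbourhood, so the bracket $[X,Y]$ need not be horizontal along the square; the closure defect then has a vertical component of order $\varepsilon^2$ that cannot be absorbed by a horizontal correction of the same order. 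The actual argument in \cite{FGR,CGJK} proceeds differently (via a suitable extended connection and its Ambrose--Singer theorem, or via the selector construction), not via naive small squares.
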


We also have the following \cite[Theorem~2.16]{CGJK}, \cite[Theorem~2.1]{FGR}.

\begin{theorem} Let $(M,\theta,g)$ be a contact sub-Riemannian manifold and let $\nabla^N$ be an extension of the Schouten connection $\nabla$. Suppose that there is a 
	  bi-vector field $\alpha\in\Gamma(\Lambda^2 \D)$ such that the function $\d\theta(\alpha)$ is non-vanishing and it holds $R^N(\alpha)=0$. Then, $\Hol(\nabla)=\Hol(\nabla^N)$.   \end{theorem}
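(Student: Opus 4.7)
The plan is to establish the two inclusions separately. The inclusion $\Hol(\nabla)\subseteq\Hol(\nabla^N)$ is essentially tautological: every piece-wise smooth horizontal loop at $x$ is a piece-wise smooth loop, and by the defining relation~(\ref{nablaN}) the parallel transports of $\nabla$ and $\nabla^N$ along a horizontal curve coincide. Hence every element of $\Hol_x(\nabla)$ also lies in $\Hol_x(\nabla^N)$.

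For the reverse inclusion I would proceed by an Ambrose--Singer argument for the extended connection $\nabla^N$ and show that every generator lies in $\hol(\nabla)$. The hypothesis $R^N(\alpha)=0$ together with $\d\theta(\alpha)\neq 0$ forces, via~(\ref{RNXY}) evaluated on the bivector $\alpha$, the pointwise identity $N=-R(\alpha)/\d\theta(\alpha)$; moreover it yields the fibrewise splitting
$$\Lambda^2\D=\ker(\d\theta)\oplus\langle\alpha\rangle.$$
For any $X,Y\in\D_y$, writing $X\wedge Y=\beta+c\alpha$ with $\beta\in\ker(\d\theta)_y$ and $c=2\d\theta(X,Y)/\d\theta(\alpha)$, linearity of $R^N$ and $R^N(\alpha)=0$ give
$$2R^N(X,Y)=R^N(\beta)+cR^N(\alpha)=R^N(\beta)=R(\beta),$$
where the last equality uses~(\ref{RNXY}) and $\d\theta(\beta)=0$. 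By Theorem~\ref{ThAS1} applied to the trivial curve at $y$, the right-hand side lies in $\hol_y(\nabla)$, so the restriction $R^N|_{\Lambda^2\D}$ takes values in $\hol(\nabla)$.

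The hard part is to control the Reeb components $R^N(\xi,Y)=(\L_\xi\nabla)_Y-\nabla_YN$ from~(\ref{RNxiX}). My strategy is to invoke the second Bianchi identity for the vector bundle connection $\nabla^N$ on $\D$: for $X,Y,Z\in\D$, the cyclic sum $\sum_{XYZ}(\nabla^N_X R^N)(Y,Z)$ equals (a cyclic sum of) terms involving $R^N(\xi,\cdot)$ weighted by $\d\theta$, because the relevant torsion term produced by the horizontal bracket is $-\d\theta(X,Y)\xi$ (cf.~the discussion following~\eqref{Schouten2}). The left-hand side takes values in $\hol(\nabla)$ by iterated application of Theorem~\ref{ThAS1} to the horizontal component of $R^N$ we just controlled together with the formula $N=-R(\alpha)/\d\theta(\alpha)$. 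Choosing $X,Y,Z$ so that the coefficient of $R^N(\xi,Z)$ is non-zero, which is possible because the distribution is contact and hence $\d\theta|_\D$ is non-degenerate, we can solve for $R^N(\xi,Z)$ and conclude that it too is valued in $\hol(\nabla)$.

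Combining these two facts with the Ambrose--Singer theorem for the full connection $\nabla^N$ — using that by Chow--Rashevskii every point of $M$ is reachable from $x$ by a horizontal curve, along which $\nabla^N$- and $\nabla$-parallel transport coincide and therefore preserve $\hol$ — yields $\hol^0_x(\nabla^N)\subseteq\hol_x(\nabla)$. Together with the first paragraph this gives equality of the restricted holonomy groups; and the full groups coincide because their discrete quotients correspond to parallel transports along horizontal representatives of homotopy classes of loops, which are identical for $\nabla$ and $\nabla^N$.
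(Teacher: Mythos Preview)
The paper does not prove this theorem; it is quoted from \cite[Theorem~2.16]{CGJK} and \cite[Theorem~2.1]{FGR}. So there is no in-paper proof to compare against, and I evaluate your argument on its own.

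Your strategy is sound at the infinitesimal level, and the Bianchi manoeuvre is a nice way to force $R^N(\xi,\cdot)$ into $\hol(\nabla)$: writing the second Bianchi identity for the vector-bundle connection $\nabla^N$ with horizontal entries $X,Y,Z$, the bracket terms produce $\d\theta(X,Y)R^N(\xi,Z)$, the remaining terms lie in $\hol(\nabla)$ because $R^N|_{\Lambda^2\D}$ does and $\hol(\nabla)$ is a $\nabla$-parallel subbundle, and then choosing $X,Y$ in the $\d\theta$-orthogonal of $Z$ with $\d\theta(X,Y)\neq 0$ isolates $R^N(\xi,Z)$.

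There is, however, a genuine gap in your Ambrose--Singer step. Knowing that $R^N$ takes values in the subbundle $\hol(\nabla)\subset\mathrm{End}(\D)$ is \emph{not} enough to conclude $\hol_x(\nabla^N)\subseteq\hol_x(\nabla)$: Ambrose--Singer requires conjugation by $\tau^N_\gamma$ for \emph{all} curves $\gamma$, and your Chow--Rashevskii remark only lets you reach every point along a horizontal curve --- it does not tell you that $\tau^N_\gamma$ along a non-horizontal $\gamma$ preserves $\hol(\nabla)$. What you need is that $\hol(\nabla)$ is $\nabla^N$-parallel, i.e.\ that $\nabla^N_\xi$ preserves it. This can be obtained from the curvature identity on $\mathrm{End}(\D)$: for $A\in\Gamma(\hol(\nabla))$ and $X,Y\in\D$,
\[
\nabla_X\nabla_Y A-\nabla_Y\nabla_X A-\nabla_{\pi[X,Y]}A+\d\theta(X,Y)\,\nabla^N_\xi A=[R^N(X,Y),A],
\]
whose right-hand side and first three left-hand terms lie in $\hol(\nabla)$; choosing $\d\theta(X,Y)\neq 0$ gives $\nabla^N_\xi A\in\Gamma(\hol(\nabla))$. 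With this in hand Ambrose--Singer does yield $\hol_x(\nabla^N)\subseteq\hol_x(\nabla)$.

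A second gap is the passage from restricted to full holonomy. Your sentence about ``horizontal representatives of homotopy classes of loops'' does not establish $\Hol_x(\nabla^N)\subseteq\Hol_x(\nabla)$: equality of Lie algebras gives only $\Hol^0_x(\nabla^N)=\Hol^0_x(\nabla)$, and there is no reason a priori that an arbitrary $\nabla^N$-parallel transport along a non-horizontal loop equals a $\nabla$-parallel transport along some horizontal loop. Closing this requires a separate argument (this is where the cited references do real work, typically by explicitly replacing vertical segments by horizontal loops using the curvature hypothesis $R^N(\alpha)=0$), and your proposal does not supply it.
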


Since $\alpha=\d\theta^{-1}$ satisfies the assumption of this theorem and by the definition of the Wagner connection, we can conclude the following.

\begin{theorem}\label{thholW} Let $(M,\theta,g)$ be a contact sub-Riemannian manifold. Then the horizontal holonomy group $\Hol(\nabla)$ coincides with the holonomy group $\Hol(\nabla^W)$ of the Wagner connection. 
\end{theorem}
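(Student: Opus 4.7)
The plan is to apply the preceding Ambrose--Singer-type theorem (the unnumbered theorem from \cite{CGJK,FGR} quoted immediately above) to the specific, globally defined bi-vector field
\[
\alpha = (\d\theta)^{-1} \in \Gamma(\Lambda^2\D),
\]
which exists and is smooth because $\theta$ being a contact form means that $\d\theta$ restricts to a non-degenerate $2$-form on $\D$, so that pointwise inversion in $\Lambda^2\D$ is well-defined and smooth.

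First I would verify the two hypotheses of the cited theorem for this $\alpha$ and for $N = N^W$. The non-vanishing condition $\d\theta(\alpha) \ne 0$ is simply the identity $\d\theta((\d\theta)^{-1}) = -4m$ which was computed just before the definition of the Wagner connection; since $\dim M = 2m+1 \geq 5$ we have $-4m \neq 0$ everywhere on $M$. The curvature condition $R^N(\alpha) = 0$ with $N = N^W$ is precisely the defining equation \eqref{Wagner2} of the Wagner connection $\nabla^W$, which Proposition-preceding-the-statement established to hold. Thus both hypotheses are satisfied at every point of $M$.

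With these two hypotheses in hand, the cited theorem immediately yields $\Hol(\nabla) = \Hol(\nabla^W)$, completing the proof. I do not foresee any real obstacle, as this is essentially a direct specialisation of the general principle that killing the curvature along a ``non-Legendrian'' bi-vector direction is exactly what is needed to identify the horizontal holonomy (defined via horizontal loops) with the full holonomy of the extended connection (defined via all loops); the Wagner connection was engineered precisely so that $\alpha = (\d\theta)^{-1}$ plays this role.
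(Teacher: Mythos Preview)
Your proposal is correct and matches the paper's approach exactly: the paper simply notes that $\alpha=(\d\theta)^{-1}$ satisfies the hypotheses of the cited theorem (non-vanishing $\d\theta(\alpha)$ and $R^W(\alpha)=0$ by the defining property~\eqref{Wagner2} of the Wagner connection), and concludes. Your write-up is just a slightly more explicit version of the same one-line argument.
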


\subsection{The connection $\nabla^\theta$}\label{secConTheta}

Let $(M,\theta)$ be a contact manifold.  
Using the 1-form $\theta$, we define the connection $\nabla^\theta$ on the trivial line bundle $\left<\xi\right>$ over $M$ by setting 
$$\nabla^\theta_X\xi=\theta(X)\xi,\quad X\in\Gamma(TM).$$
The curvature of the connection $\nabla^\theta$ coincides with $\d\theta$, i.e.,
$$R^\theta(X,Y)\xi=\d\theta(X,Y)\xi,\quad X,Y\in TM.$$
The parallel transport along a closed curve $\mu(t)$ at a point $x$ is given by
\begin{equation}
\label{parnabtheta}
\xi_x\mapsto\exp\left(-\int_\mu \theta\right)\xi_x,
\end{equation}
see, e.g.,~\cite{GalHolK}.
The holonomy group $\Hol_x(\nabla^\theta)$ is isomorphic to the subgroup $\Real_+\subset\GL(1,\Real)$. 

%\newpage

\subsection{The group $\widetilde{\Hol}_x(\nabla^N)$}
\label{tildehol-sec}

Let $\nabla^N$ be a metric extended connection, $R^\N$ its curvature tensor, and $\tau^\N_\gamma$ the corresponding parallel transport along a piece-wise smooth curve $\gamma:[a,b]\to M$.
Denote by $\widetilde{\Hol}_x(\nabla^N)$ the subgroup of the holonomy group   ${\Hol}_x(\nabla^N)$ of $\nabla^N$ that consists of parallel transports for the connection $\nabla^N$ along the loops in $M$ that satisfy the condition
\begin{equation}\label{condintegr0}\int_\mu\theta=0.\end{equation}
It holds 
$${\Hol}_x(\nabla)\subseteq\widetilde{\Hol}_x(\nabla^N)\subseteq {\Hol}_x(\nabla^N) \subseteq \Or(\D_x,g_x).$$ 
%\acom{Thomas, please, decide if we use everywhere $\subseteq$ or $\subseteq$} 

\begin{theorem} \label{theorHoltild}
Let $(M,g,\theta)$ be a sub-Riemannian contact structure and let $\nabla^N$ be a metric extended connection.
Then	 it holds that either $\widetilde{\Hol}_x(\nabla^N)={\Hol}_x(\nabla^N)$, or $\widetilde{\Hol}_x(\nabla^N)$ is a co-dimension one  normal Lie subgroup of  ${\Hol}_x(\nabla^N)$.

		The Lie algebra $\widetilde{\hol}_x(\nabla^N)$  of the Lie group $\widetilde{\Hol}_x(\nabla^N)$  is spanned by the endomorphisms
	$$(\tau^N_\gamma)^{-1}\circ R^N(\alpha)\circ\tau^N_\gamma,\quad (\tau_\gamma^N)^{-1}\circ R^N(\xi,X)\circ\tau^N_\gamma, $$
	where $\gamma:[a,b]\to M$ is an arbitrary piece-wise smooth curve starting at $x$,  $X\in \D_{\gamma(b)} $ and $\alpha\in \ker(\d\theta)_{\gamma(b)}$.
\end{theorem}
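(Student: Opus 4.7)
The plan is to reduce the statement to the standard Ambrose--Singer theorem applied to the product connection $\widehat\nabla := \nabla^N \oplus \nabla^\theta$ on $TM = \D \oplus \langle\xi\rangle$. By~(\ref{parnabtheta}), its parallel transport along a loop $\mu$ at $x$ is $\tau^N_\mu \oplus e^{-\int_\mu\theta}\cdot\id_{\langle\xi_x\rangle}$, so that, denoting the two projections by $\pi_1,\pi_2$,
\[
\widetilde{\Hol}_x(\nabla^N) \;=\; \pi_1\bigl(\ker\bigl(\pi_2\colon \Hol_x(\widehat\nabla) \to \Hol_x(\nabla^\theta)\bigr)\bigr).
\]
Normality of $\widetilde{\Hol}_x(\nabla^N)$ in $\Hol_x(\nabla^N)$ is immediate from additivity of $\mu\mapsto\int_\mu\theta$: whenever $\int_\nu\theta=0$, we have $\int_{\mu\nu\mu^{-1}}\theta=0$ for any loop $\mu$.

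For the dichotomy, set $\mathfrak{g} := \hol_x(\widehat\nabla) \subseteq \hol_x(\nabla^N)\oplus\Real$. By Ambrose--Singer, $\mathfrak{g}$ is spanned by parallel transports (along horizontal-free curves from $x$) of $(R^N(X,Y),\d\theta(X,Y))$ and of $(R^N(\xi,X),0)$, where $X,Y\in\D_{\gamma(b)}$ and the zero in the second generator uses $\iota_\xi\d\theta=0$. Since $\d\theta$ is non-degenerate on $\D$, one may choose $X,Y$ with $\d\theta(X,Y)\ne 0$, so $\pi_2\colon\mathfrak{g}\to\Real$ is surjective and $\ker\pi_2$ is a codimension-one ideal of $\mathfrak{g}$. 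The projection $\pi_1\colon\mathfrak{g}\to\hol_x(\nabla^N)$ is surjective too, and $\ker\pi_1\subseteq\{0\}\oplus\Real$ is either $0$ or the entire line. In either case $\ker\pi_1\cap\ker\pi_2=0$, so $\pi_1|_{\ker\pi_2}$ is injective and $\widetilde{\hol}_x(\nabla^N)=\pi_1(\ker\pi_2)$ has dimension $\dim\mathfrak{g}-1$. When $\ker\pi_1=0$, this gives codimension one in $\hol_x(\nabla^N)$; when $\ker\pi_1=\{0\}\oplus\Real$, it forces $\widetilde{\hol}_x(\nabla^N)=\hol_x(\nabla^N)$. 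In both cases $\widetilde{\hol}_x(\nabla^N)$ is an ideal, as the image of an ideal under a surjective Lie-algebra homomorphism.

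The explicit spanning formula then follows by identifying which generators of $\mathfrak{g}$ already lie in $\ker\pi_2$. Linear combinations of the first type of generators that cancel the $\Real$-components are precisely of the form $(R^N(\alpha),0)$ with $\alpha\in\ker(\d\theta)\subseteq\Lambda^2\D$ a Legendrian bivector, while the generators $(R^N(\xi,X),0)$ automatically lie in $\ker\pi_2$; projecting by $\pi_1$ and retaining the parallel transports $\tau^N_\gamma$ along curves from $x$ yields exactly the two families of endomorphisms in the statement.

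Finally, the Lie-algebra dichotomy needs to be promoted to the asserted group-level statement. In the case $\ker\pi_1=\{0\}\oplus\Real$, the element $(0,c)\in\mathfrak{g}$ with $c\ne 0$ exponentiates inside $\Hol^0_x(\widehat\nabla)$, producing contractible loops with $\tau^N=\id$ and prescribed value of $\int\theta$; concatenating any loop representing a given $A\in\Hol_x(\nabla^N)$ with such a correction loop kills its $\theta$-integral, hence $A\in\widetilde{\Hol}_x(\nabla^N)$ and thus $\widetilde{\Hol}_x(\nabla^N)=\Hol_x(\nabla^N)$. Otherwise the codimension-one ideal integrates to a codimension-one normal Lie subgroup. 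The main technical point I expect to navigate is the bookkeeping at the group level in the equality case, to make sure equality holds on the nose rather than only at the identity component; the non-degeneracy of $\d\theta$ on $\D$ provided by the contact condition is the decisive geometric input that makes the entire argument run.
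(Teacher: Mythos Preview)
Your proof is correct and follows essentially the same approach as the paper: both introduce the product connection $\nabla^N\oplus\nabla^\theta$ on $TM=\D\oplus\langle\xi\rangle$, identify $\widetilde{\Hol}_x(\nabla^N)$ with the kernel of the second projection on the product holonomy, and read off the generators of $\widetilde{\hol}_x(\nabla^N)$ from Ambrose--Singer applied to that product connection. Your write-up is in fact slightly more careful than the paper's in one respect: you explicitly promote the Lie-algebra equality $\widetilde{\hol}_x=\hol_x(\nabla^N)$ to the group level by exponentiating $(0,c)$ to obtain correction loops, whereas the paper handles the dichotomy more tersely via the exact sequence. (One cosmetic remark: the phrase ``horizontal-free curves'' is confusing---you presumably mean arbitrary, not-necessarily-horizontal curves.)
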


% The subgroup $\widetilde{\Hol}_x(\nabla^N)\subseteq {\Hol}_x(\nabla^N)$ is a Lie group subgroup, since  each curve  $\mu(t)$ satisfying the condition \eqref{condintegr} maybe connected  with the curve $\tilde\mu(t)$ by the $s$-family of curves  $\varphi_{sf(t)}\mu(t)$, and each curve from the family satisfies (I will check!) the condition \eqref{condintegr}.

\begin{proof} 
Consider the affine connection $\tnab^N:=\nabla^{N}\oplus\nabla^\theta$ on the tangent bundle $TM=D\oplus\left<\xi\right>$ defined by
$$\tnab^N(Y+\xi)=\nabla^{N}Y+\nabla^\theta\xi,\qquad \text{ for }Y\in\Gamma(\D).$$ The holonomy group 
${\Hol}_x(\tnab^\N)$ of this connection consists of the pairs $(\tau^N_\gamma,\tau^\theta_\gamma)$, where $\gamma$ is a loop at $x$, i.e., ${\Hol}_x(\tnab^\N)$ is contained in the product 
\[{\Hol}_x(\tnab^\N)\ \subseteq\ {\Hol}_x(\nabla^N)\times {\Hol}_x(\nabla^\theta)={\Hol}_x(\nabla^N)\times \Real_{+}.\] 
Consider the projection
$$\pr_2:  {\Hol}_x(\nabla^N)\times \Real_+\longrightarrow \Real_+,$$
and its restriction
$$\pi_2:=\pr_2|_{{\Hol}_x(\tnab^\N)}:  {\Hol}_x(\tnab^\N)\longrightarrow \Real_+,$$
which are  Lie group homomorphisms.   Moreover, we have  Lie group homomorphism $\iota:\Hol(\nabla^N)\ni A\to (A,1)\in \Hol(\nabla^N)\times \Real_{+}$ with the exact sequence of Lie groups 
\[\begin{array}{ccccccccccc}
\1&\longrightarrow& \Hol(\nabla^N)&\stackrel{\iota}{\longrightarrow} & \Hol(\nabla^N) \times \Real_+ &\stackrel{\pr_2}{\longrightarrow}& \Real_+ &\longrightarrow&\1.
\end{array}
\]
By the definition of $\widetilde{\Hol}_x(\nabla^N)\subseteq \Hol_x(\nabla^N) $ and~(\ref{parnabtheta}), and by the fact that $\nabla^\theta$ is not flat,  it restricts to an exact sequence of groups
\[\begin{array}{ccccccccccc}
\1&\longrightarrow&\widetilde{\Hol}_x(\nabla^N) &\stackrel{\iota}{\longrightarrow} & \Hol(\tnab^N) &\stackrel{\pi_2}{\longrightarrow}& \Real_+ &\longrightarrow&\1.
\end{array}
\]
Hence, $\ker (\pi_2)= \widetilde{\Hol}_x(\nabla^N) \times \{1\}$ is a normal Lie subgroup of ${\Hol}_x(\tnab^\N)$ of  co-dimension $1$.
%\acom{here "co-dimension 1", but above "co-dimension one "} 
Moreover it shows 
 that $\widetilde{\Hol}_x(\nabla^N)$ is a
normal Lie subgroup of 
$ {\Hol}_x(\nabla^\N)$, of co-dimension $0$ or $1$, depending on whether $\Hol(\tnab^N)= \Hol(\nabla^N) \times \Real_+ $ or not.
See also  \cite[Th. 5]{GalHolK}.

%Then  $$
%%\widetilde{\Hol}_x(\nabla^N)\cong
%\ker\pr_2|_{{\Hol}_x(\tnab^\N)}$$
%This implies that $\widetilde{\Hol}_x(\nabla^N)$ is a  Lie subgroup of ${\Hol}_x(\nabla^\N)$.
%
%We have the following group isomorphism 
%$$\{(\tau^N_\gamma,\tau^\theta_\gamma)|\gamma \text{ satisfies } \eqref{condintegr0}\}\cong \widetilde{\Hol}_x(\nabla^N),\quad (\tau^N_\gamma,\tau^\theta_\gamma=1)\mapsto \tau^N_\gamma.$$
%
%The facts that  $\widetilde{\Hol}_x(\nabla^N\subseteq{\Hol}_x(\nabla^\N)$ is a normal Lie subgroup may be proved  as in \cite[Th. 5]{GalHolK}. 
%
%Next, following the proof of \cite[Th. 5]{GalHolK}, suppose that 
%
%$\gamma$ and $\mu$ be two loops at the point $x$ satisfying the condition
%$$ \int_\mu\theta=\int_\gamma\theta.$$
%Then it holds that $\tau^{N}_{\mu\gamma^{-1}}\in\widetilde\Hol_x(\nabla^N)$. This shows that the map
%$$\lambda:\Hol_x(\nabla^\theta)=\Real_+\to\Hol_x(\nabla^{N})/\widetilde{\Hol}_x(\nabla^N),$$
%$$\lambda:\exp\left(-\int_\mu\theta\right)\mapsto \tau^{N}_\mu\cdot \widetilde{\Hol_x}(\nabla^N)$$
%is well-defined. This map is a surjective  Lie group homomorphism. This shows that either
% $\widetilde{\Hol}_x(\nabla^N)={\Hol}_x(\nabla^N)$, or $\widetilde{\Hol}_x(\nabla^N)$ is a co-dimension one  normal Lie subgroup of  ${\Hol}_x(\nabla^N)$.

By the Ambrose--Singer Theorem, the Lie algebra  ${\hol}_x(\tnab^\N)$ is spanned by the endomorphisms of the form
$$\big((\tau^N_\gamma)^{-1}\circ R^N(\beta)\circ\tau^N_\gamma,(\tau^\theta_\gamma)^{-1}\circ R^\theta(\beta)\circ\tau^\theta_\gamma\big),$$
where $\gamma:[a,b]\to M$ is an arbitrary piece-wise smooth curve starting at $\gamma(a)=x$, and $\beta\in \Lambda^2T_{\gamma(b)}M$. Since 
$$\widetilde{\hol}_x(\nabla^N)\cong\ker (\d\pi_2),$$
the above endomorphism belongs to $\widetilde{\hol}_x(\nabla^N)$ if and only if $R^\theta(\beta)=0$, which means that $\d\theta(\beta)=0$, i.e.,~$\beta=\xi\wedge X+\alpha$, where $X\in \D_{\gamma(b)}$,  $\alpha\in\ker(\d\theta)_{\gamma(b)}$. This proves the last statement of the theorem. \end{proof}

\subsection{The group $\widehat{\Hol}_x(\nabla^N)$.} 
Let us assume that the Reeb vector field $\xi$ is complete 
and we denote its flow by $\varphi_t$. Fix a point $x\in M$. If the orbit $\varphi_t(x)$ of the point $x$ is cyclic, then we denote by $t_x$ the smallest positive $t_x$ such that $\varphi_{t_x}(x)=x$. Otherwise we assume that $t_x=0$.
Let $$\mu:[a,b]\to M$$ be  a piece-wise smooth curve. In \cite{GalHolK} it is shown that the  curve   

\begin{equation}\label{deftildmu}
\tilde \mu(t)=\varphi_{f(t)}(\mu(t)),\quad\text{ 
where}\quad f(t)=-\int_a^t\theta(\dot\mu(r))\d r,\quad  t\in[a,b],\end{equation}
 is horizontal and $\tilde\mu(a)=\mu(a)$. Moreover, if $\mu$ is a loop at $x$, then $\tilde\mu$ is a loop  if and only if 
 	\begin{equation}\label{condintegr}\int_\mu\theta\in \mathbb{Z}t_x.\end{equation}
Let $\mu$ be any curve starting at $x$ and satisfying \eqref{condintegr0}.
	Let $\tilde\mu$ be the corresponding horizontal curve defined in~(\ref{deftildmu}), where condition~\eqref{condintegr0} ensures that the endpoints of $\mu$ and $\tilde \mu$ coincide, so that $\mu*\tilde\mu^{-1}$ is a loop.  Let $\widehat{\Hol}_x(\nabla^N)$ be the group generated by parallel transports along all loops of the form $\mu*\tilde\mu^{-1}$. 
It is clear that $\widehat{\Hol}_x(\nabla^N)\subseteq \widetilde{\Hol}_x(\nabla^N)$ is a subgroup. 

Let $\mu:[a,b]\to M$ be any curve starting at $x$ and satisfying \eqref{condintegr0}.
Consider the $s$-family $\mu_s$, $s\in[0,1]$, of the curves 
 \begin{equation}
\label{mus}
t\longmapsto \mu_s(t)=\varphi_{sf(t)}(\mu(t)),\end{equation}
with $\mu_0=\mu$, and $\mu_1=\tilde\mu$.

\begin{lem}
\label{homotopylem}
	For each $s\in[0,1]$, the curve $\mu_s$ satisfies \eqref{condintegr0}.
\end{lem}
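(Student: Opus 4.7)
The plan is a direct computation: differentiate $\mu_s$, apply $\theta$, and integrate, using the single fact that the Reeb flow preserves $\theta$.

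First I would record the key property of the Reeb flow. Since $\theta(\xi)=1$ and $\iota_\xi\d\theta=0$, Cartan's formula gives $\mathcal{L}_\xi\theta=\iota_\xi\d\theta+\d(\iota_\xi\theta)=0$, hence $\varphi_t^*\theta=\theta$ for all $t$ in the flow domain. In particular, for every tangent vector $v$ and every admissible $t$,
\begin{equation*}
\theta\bigl((\d\varphi_t)(v)\bigr)=\theta(v).
\end{equation*}

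Next I would differentiate $\mu_s(t)=\varphi_{sf(t)}(\mu(t))$ with respect to $t$. Viewing this as a composition (first move along $\mu$, then flow by $\xi$ for time $sf(t)$), the chain rule yields
\begin{equation*}
\dot\mu_s(t)=sf'(t)\,\xi_{\mu_s(t)}+(\d\varphi_{sf(t)})_{\mu(t)}\bigl(\dot\mu(t)\bigr).
\end{equation*}
Applying $\theta$ and using $\theta(\xi)=1$ together with the invariance above, this simplifies to
\begin{equation*}
\theta(\dot\mu_s(t))=sf'(t)+\theta(\dot\mu(t))=-s\,\theta(\dot\mu(t))+\theta(\dot\mu(t))=(1-s)\,\theta(\dot\mu(t)),
\end{equation*}
where we used $f'(t)=-\theta(\dot\mu(t))$ from the definition of $f$ in \eqref{deftildmu}.

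Integrating from $a$ to $b$ then gives
\begin{equation*}
\int_{\mu_s}\theta=(1-s)\int_\mu\theta=0,
\end{equation*}
since $\mu$ satisfies \eqref{condintegr0} by assumption. There is no real obstacle here; the only subtlety is the bookkeeping that $\mu_s$ is indeed well-defined as a piece-wise smooth curve, which follows from completeness of $\xi$ (so that $\varphi_{sf(t)}$ is defined for all $s\in[0,1]$ and $t\in[a,b]$) and from piece-wise smoothness of both $\mu$ and $f$.
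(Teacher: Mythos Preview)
Your proof is correct and follows essentially the same approach as the paper: differentiate $\mu_s$, use $\varphi_t^*\theta=\theta$, and integrate. The only cosmetic difference is that you substitute $f'(t)=-\theta(\dot\mu(t))$ before integrating to obtain the clean factor $(1-s)$, whereas the paper integrates $s\dot f(t)+\theta(\dot\mu(t))$ directly to $s(f(b)-f(a))+\int_\mu\theta$ and then observes both summands vanish.
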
 

\begin{proof} It holds $$\dot\mu_s(t)=s\dot f(t) \xi_{\varphi_{sf(t)}\mu(t)}+(d\varphi_{sf(t)})\dot\mu(t),$$ where 
$$d\varphi_{sf(t)}:T_{\mu(t)}M\to T_{\varphi_{sf(t)}\mu(t)}M$$ is the differential of the diffeomorphism $\varphi_{sf(t)}$. Since $\varphi$ preserves $\theta$, we get 
$$\theta(\dot\mu_s(t))=s\dot f(t)+\theta ((d\varphi_{sf(t)})\dot\mu(t))=s\dot f(st)+\theta (\dot\mu(t)).$$ Now, 
$$\int_{\mu_s}\theta=s(f(b)-f(a))+\int_{\mu}\theta=0.$$ \end{proof}

\begin{prop} 
Let $(M,g,\theta)$ be a sub-Riemannian contact structure with complete Reeb vector field and let $\nabla^N$ be an extended connection.
The group $\widehat{\Hol}_x(\nabla^N)$ is a connected Lie subgroup of $\widetilde{\Hol}_x(\nabla^N)$.
\end{prop}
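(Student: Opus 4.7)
The plan is to prove path-connectedness of $\widehat{\Hol}_x(\nabla^N)$ and then invoke Yamabe's theorem (every path-connected subgroup of a Lie group is itself a connected Lie subgroup). Since $\widehat{\Hol}_x(\nabla^N)\subseteq\widetilde{\Hol}_x(\nabla^N)\subseteq\Or(\D_x,g_x)$, this gives the desired conclusion.

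The mechanism for path-connectedness is the homotopy $\mu_s$ from Lemma~\ref{homotopylem}. For a curve $\mu:[a,b]\to M$ starting at $x$ with $\int_\mu\theta=0$, one has $f(a)=0$ and $f(b)=-\int_\mu\theta=0$, so $\mu_s(a)=\varphi_0(x)=x$ and $\mu_s(b)=\varphi_0(\mu(b))=\mu(b)$. Hence each $L_s:=\mu*\mu_s^{-1}$ is a genuine loop at $x$, depending continuously on $s$ in the $C^1$-topology on curves, with $L_0=\mu*\mu^{-1}$ trivially parallel-transporting to $\id$, and $L_1=\mu*\tilde\mu^{-1}$ the arbitrary chosen generator of $\widehat{\Hol}_x(\nabla^N)$.

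The decisive observation is that the horizontal partner of $\mu_s$ coincides with $\tilde\mu$ for every $s$. Indeed, since $\varphi_t$ preserves $\theta$ and $\dot f=-\theta(\dot\mu)$, the computation from the proof of Lemma~\ref{homotopylem} gives
\[
\theta(\dot\mu_s(t))=s\dot f(t)+\theta(\dot\mu(t))=(1-s)\theta(\dot\mu(t)),
\]
so $f_s(t):=-\int_a^t\theta(\dot\mu_s)\d r$ equals $(1-s)f(t)$, and therefore
\[
\widetilde{\mu_s}(t)=\varphi_{f_s(t)}(\mu_s(t))=\varphi_{(1-s)f(t)}\bigl(\varphi_{sf(t)}(\mu(t))\bigr)=\varphi_{f(t)}(\mu(t))=\tilde\mu(t).
\]
In particular, for every $s$ the loop $\mu_s*\widetilde{\mu_s}^{-1}=\mu_s*\tilde\mu^{-1}$ is itself one of the defining generators of $\widehat{\Hol}_x(\nabla^N)$. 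Since parallel transport along $\tilde\mu^{-1}*\tilde\mu$ is trivial, the parallel transport along $L_s$ factors as
\[
\tau^N_{L_s}=\tau^N_{\mu*\tilde\mu^{-1}}\bigl(\tau^N_{\mu_s*\tilde\mu^{-1}}\bigr)^{-1},
\]
and therefore lies in $\widehat{\Hol}_x(\nabla^N)$.

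Combining these ingredients, $s\mapsto\tau^N_{L_s}$ is a continuous path inside $\widehat{\Hol}_x(\nabla^N)$ joining $\id$ to the chosen generator $\tau^N_{\mu*\tilde\mu^{-1}}$. Since products and inverses of continuous paths through $\id$ remain so, the whole group $\widehat{\Hol}_x(\nabla^N)$ is path-connected, and Yamabe's theorem yields that it is a connected Lie subgroup of $\widetilde{\Hol}_x(\nabla^N)$. The one non-routine step is the identity $\widetilde{\mu_s}=\tilde\mu$; once that is established, the remainder is a formal composition of standard facts about parallel transport, concatenation of loops, and path-connected subgroups of Lie groups.
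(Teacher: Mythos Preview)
Your proof is correct and follows essentially the same strategy as the paper: use the homotopy $\mu_s$ from Lemma~\ref{homotopylem} to connect each generator of $\widehat{\Hol}_x(\nabla^N)$ to the identity by a continuous path inside the group, then invoke Yamabe's theorem.

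The only difference is in the choice of path. The paper uses the simpler curve $s\mapsto\tau^N_{\mu_s*\widetilde{\mu_s}^{-1}}$, which lies in $\widehat{\Hol}_x(\nabla^N)$ directly by definition (each $\mu_s$ satisfies \eqref{condintegr0}), equals the chosen generator at $s=0$, and equals the identity at $s=1$ because $\mu_1=\tilde\mu$ is horizontal, so $\widetilde{\mu_1}=\tilde\mu$. Your identity $\widetilde{\mu_s}=\tilde\mu$ for \emph{all} $s$ is actually stronger than what the paper uses, and with it the paper's path becomes simply $s\mapsto\tau^N_{\mu_s*\tilde\mu^{-1}}$. Having this in hand, your detour through $L_s=\mu*\mu_s^{-1}$ and the factoring $\tau^N_{L_s}=(\tau^N_{\mu_s*\tilde\mu^{-1}})^{-1}\circ\tau^N_{\mu*\tilde\mu^{-1}}$ is unnecessary: you could have taken $s\mapsto\tau^N_{\mu_s*\tilde\mu^{-1}}$ itself as the path (it is manifestly a generator for every $s$, runs from the chosen generator at $s=0$ to $\tau^N_{\tilde\mu*\tilde\mu^{-1}}=\id$ at $s=1$). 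Either way the argument goes through.
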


\begin{proof}
Let $\mu$ be a curve satisfying~(\ref{condintegr0}) and let $\tau^N_{\mu\ast \tilde{\mu}^{-1}}$ be an arbitrary element in $\widehat{\Hol}_x(\nabla^N)$.
By Lemma~\ref{homotopylem} the curve
\[\tau_s^N:=\tau^N_{\mu_s*\tilde\mu_s^{-1}},\]
where $\mu_s$ is defined in~(\ref{mus}), is a curve in $\widehat{\Hol}_x(\nabla^N)$ and we claim that it joins 
$\tau^N_{\mu\ast \tilde{\mu}^{-1}}$ with the identity. Indeed, $\mu_0*\tilde\mu_0^{-1}= \mu*\tilde\mu^{-1}$ and, since $\tilde \mu $ is horizontal so that $\tilde{\tilde{\mu}}=\tilde\mu$, we have
\[\mu_1*\tilde\mu_1^{-1}= \tilde\mu*\tilde \mu^{-1}.\] 
Hence $\tau_1^N= \tau^N_{\tilde\mu*\tilde \mu^{-1}}=\id_{\D_x}$, so that 
$\widehat{\Hol}_x(\nabla^N)\subseteq \widetilde{\Hol}_x(\nabla^N)$ is a connected  subgroup, which implies that it is a Lie subgroup. \end{proof}
%
%
%It is clear that \tcom{Why is that clear? How to get loops with $\int\theta\not=0$?}
%\begin{equation}\label{product-prop}
%\widetilde{\Hol}_x(\nabla^N)= \widehat{\Hol}_x(\nabla^N)\cdot\Hol_x(\nabla),\end{equation}
%i.e, the Lie group $\widetilde{\Hol}_x(\nabla^N)$ is generated by the subgroups $\widehat{\Hol}_x(\nabla^N)$ and $\Hol_x(\nabla)$.
%

\begin{prop}\label{product-prop}
Let $(M,g,\theta)$ be a sub-Riemannian contact structure with complete Reeb vector field and let $\nabla^N$ be a metric extended connection.
The Lie group $\widetilde{\Hol}_x(\nabla^N)$ is generated by the subgroups $\widehat{\Hol}_x(\nabla^N)$ and $\Hol_x(\nabla)$, i.e. 
\[
\widetilde{\Hol}_x(\nabla^N)= \widehat{\Hol}_x(\nabla^N)\cdot\Hol_x(\nabla).\]
Moreover, 
 $\Hol_x(\nabla)$ normalizes  $\widehat{\Hol}_x(\nabla^N)$.
\end{prop}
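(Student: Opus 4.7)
The plan is to exploit the explicit horizontal lift $\tilde\mu$ from~(\ref{deftildmu}). The key observation is that whenever $\mu$ is a loop at $x$ satisfying $\int_\mu\theta=0$, the function $f$ in~(\ref{deftildmu}) satisfies $f(a)=f(b)=0$, so $\tilde\mu(b)=\varphi_0(\mu(b))=x$, i.e.,~$\tilde\mu$ is itself a horizontal loop at $x$. Hence $\tau_{\tilde\mu}\in\Hol_x(\nabla)$, and since $\nabla^N$ coincides with $\nabla$ on horizontal vectors by~(\ref{nablaN}), we also have $\tau^N_{\tilde\mu}=\tau_{\tilde\mu}$.

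I would establish the normalizer assertion first, since it then lets me freely reorder the two factor groups in the product. Let $\gamma$ be a horizontal loop at $x$ and $\mu$ a loop at $x$ with $\int_\mu\theta=0$, and set $\nu:=\gamma^{-1}*\mu*\gamma$. Then $\int_\nu\theta=\int_\mu\theta=0$, and the central step is to verify that the horizontal lift of $\nu$ is $\tilde\nu=\gamma^{-1}*\tilde\mu*\gamma$: the Reeb shift from~(\ref{deftildmu}) is constant along each horizontal segment $\gamma^{\pm 1}$, and it equals $0$ at both endpoints of the $\mu$-segment (at the start since $\int_{\gamma^{-1}}\theta=0$, at the end since $\int_\mu\theta=0$), so it vanishes identically on the $\gamma^{\pm 1}$ pieces while restricting to $f$ on the $\mu$-piece. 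Using the concatenation rule for parallel transport I then obtain
\[
\tau^N_{\nu*\tilde\nu^{-1}}=\tau_\gamma\circ\tau^N_{\mu*\tilde\mu^{-1}}\circ\tau_\gamma^{-1},
\]
so conjugation by $\tau_\gamma\in\Hol_x(\nabla)$ sends the generator $\tau^N_{\mu*\tilde\mu^{-1}}$ of $\widehat{\Hol}_x(\nabla^N)$ back into $\widehat{\Hol}_x(\nabla^N)$, proving the normalizer claim.

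For the factorization, take any $\tau^N_\mu\in\widetilde{\Hol}_x(\nabla^N)$. The concatenation rule yields
\[
\tau^N_{\mu*\tilde\mu^{-1}}=(\tau^N_{\tilde\mu})^{-1}\circ\tau^N_\mu=\tau_{\tilde\mu}^{-1}\circ\tau^N_\mu,
\]
which rearranges to $\tau^N_\mu=\tau_{\tilde\mu}\circ\tau^N_{\mu*\tilde\mu^{-1}}\in\Hol_x(\nabla)\cdot\widehat{\Hol}_x(\nabla^N)$. By the normalizer assertion just established, the two products $\Hol_x(\nabla)\cdot\widehat{\Hol}_x(\nabla^N)$ and $\widehat{\Hol}_x(\nabla^N)\cdot\Hol_x(\nabla)$ coincide, so $\widetilde{\Hol}_x(\nabla^N)\subseteq\widehat{\Hol}_x(\nabla^N)\cdot\Hol_x(\nabla)$. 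The reverse inclusion is automatic since both factor groups sit inside $\widetilde{\Hol}_x(\nabla^N)$.

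The only point requiring care is the identification $\tilde\nu=\gamma^{-1}*\tilde\mu*\gamma$, i.e.,~the compatibility of the horizontal lift with pre- and post-composition by horizontal curves; once this is in hand, both claims reduce to straightforward manipulations with the concatenation rule for parallel transport.
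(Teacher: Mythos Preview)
Your proposal is correct and follows the same approach as the paper: both arguments rest on the decomposition $\tau^N_\gamma=\tau^N_{\gamma*\tilde\gamma^{-1}}\circ\tau_{\tilde\gamma}$ for the factorization, and on the identity $\widetilde{\gamma*\mu*\gamma^{-1}}=\gamma*\tilde\mu*\gamma^{-1}$ (for $\gamma$ horizontal) for the normalizer claim. The only differences are cosmetic---the order in which the two claims are established and the concatenation convention, which is why the paper's factorization lands directly in $\widehat{\Hol}_x(\nabla^N)\cdot\Hol_x(\nabla)$ while yours requires the normalizer to swap---together with one small point: in the normalizer step you take $\mu$ to be a loop, whereas $\widehat{\Hol}_x(\nabla^N)$ is defined via \emph{curves} $\mu$ starting at $x$ with $\int_\mu\theta=0$; this is harmless, since appending any horizontal arc from $\mu(b)$ back to $x$ (Chow--Rashevskii) closes $\mu$ to a loop without altering $\tau^N_{\mu*\tilde\mu^{-1}}$.
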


\begin{proof}

If a loop $\gamma$ satisfies \eqref{condintegr0}, then
$$\tau^N_\gamma=\tau^N_{\gamma*\tilde\gamma^{-1}}\circ\tau^N_{\tilde\gamma},$$ which implies the first statement.

To show the normalizing condition, let $\mu$ be a curve starting at $x$ and satisfying \eqref{condintegr0}, and $\gamma$ is a horizontal loop at $x$. Then 
$$(\tau^N_\gamma)^{-1}\circ\tau^N_{\mu*\tilde\mu^{-1}}\circ\tau^N_\gamma=(\tau^N_\gamma)^{-1}\circ\tau^N_\mu\circ(\tau^N_\gamma)\circ(\tau^N_\gamma)^{-1}\circ(\tau^N_{\tilde\mu})^{-1}\circ\tau^N_\gamma=\tau^N_{\gamma*\mu*\gamma^{-1}}\circ
(\tau^N_{\gamma*\tilde\mu*\gamma^{-1}})^{-1}.$$  Since $\gamma$ is horizontal, it holds $\tilde\gamma=\gamma$, and
$$\widetilde{\gamma*\mu*\gamma^{-1}}=\tilde\gamma*\tilde\mu*\tilde\gamma^{-1}=
\gamma*\tilde\mu*\gamma^{-1}.$$
This proves the proposition. \end{proof}

% Let $\tilde \ff_x(\nabla^N)\subseteq\so (\D_x)$   be the subalgebra generated   
%by the endomorphisms  $$(\tau_\gamma^N)^{-1}\circ R^N(\xi,X)\circ\tau^N_\gamma, $$
%where $\gamma$ is a piece-wise smooth curve starting at $x$ and satisfying \eqref{condintegr}, and  $X$ is  a vector  from $\D$ at the endpoint of $\gamma$.
%Let $\tilde F_x(\nabla^N)\subseteq\Or (\D_x)$ be the corresponding connected Lie group.
%It holds (if the above A-S Theorem holds true!)
%$$\tilde F_x(\nabla^N)\subseteq \widetilde{\Hol}_x(\nabla^N),$$ and

\begin{prop}\label{propHhatistrivial} 
Let $(M,g,\theta)$ be a sub-Riemannian contact structure with complete Reeb vector field and let $\nabla^N$ be a metric extended connection.
If $R^N(\xi,\cdot)=0$, then the group $\widehat{\Hol}_x(\nabla^N)$ is trivial.
\end{prop}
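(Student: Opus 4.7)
The plan is to reduce the statement to the claim that $\tau^N_\mu = \tau^N_{\tilde\mu}$ for every curve $\mu:[a,b]\to M$ starting at $x$ and satisfying~(\ref{condintegr0}). Indeed, since $f(b)=-\int_\mu\theta=0$ under~(\ref{condintegr0}), we have $\tilde\mu(b)=\varphi_{f(b)}(\mu(b))=\mu(b)$, so both $\tau^N_\mu$ and $\tau^N_{\tilde\mu}$ are isomorphisms $\D_x\to\D_{\mu(b)}$. Their equality then gives $\tau^N_{\mu*\tilde\mu^{-1}}=(\tau^N_{\tilde\mu})^{-1}\circ\tau^N_\mu=\id_{\D_x}$, and since such loops generate $\widehat{\Hol}_x(\nabla^N)$, the group is trivial.

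The key idea is to use the homotopy $\mu_s(t)=\varphi_{sf(t)}(\mu(t))$ from~(\ref{mus}) to interpolate between $\mu_0=\mu$ and $\mu_1=\tilde\mu$. Because $f(a)=f(b)=0$, every $\mu_s$ has the same endpoints $\mu_s(a)=x$ and $\mu_s(b)=\mu(b)$, so this is a smooth homotopy of paths with fixed endpoints. The variation vector field of this homotopy is
\[
\partial_s \mu_s(t)=f(t)\,\xi_{\mu_s(t)},
\]
which is always a multiple of the Reeb field.

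I would then invoke the standard variation-of-parallel-transport formula: for a smooth family of paths $\mu_s$ with fixed endpoints, the derivative $\frac{d}{ds}\tau^N_{\mu_s}$ is expressed as an integral along $\mu_s$ of the endomorphisms $R^N(\partial_s\mu_s(t),\dot\mu_s(t))$ conjugated by the partial parallel transports of $\nabla^N$ along $\mu_s$. Decomposing $\dot\mu_s(t)=\pi(\dot\mu_s(t))+\theta(\dot\mu_s(t))\xi$ and using $R^N(\xi,\xi)=0$, the integrand becomes
\[
f(t)\,R^N\bigl(\xi,\pi(\dot\mu_s(t))\bigr),
\]
which vanishes identically by the hypothesis $R^N(\xi,\cdot)=0$. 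Hence $\tau^N_{\mu_s}$ is constant in $s$, giving $\tau^N_\mu=\tau^N_{\tilde\mu}$, as required.

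I do not expect any serious obstacles: geometrically the argument just says that the homotopy rectangle $(t,s)\mapsto\mu_s(t)$ is ruled by orbits of $\xi$, and the curvature of $\nabla^N$ is by assumption insensitive to $\xi$. The only point requiring a small check is that the variation formula applies to the extended connection $\nabla^N$ on the bundle $\D\to M$ rather than on $TM$; but since $R^N\in\Gamma(\Lambda^2 T^*M\otimes\mathrm{End}(\D))$, the standard derivation of the variation formula carries over verbatim.
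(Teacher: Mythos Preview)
Your proof is correct and follows essentially the same approach as the paper's: both use the homotopy rectangle $F(s,t)=\varphi_{sf(t)}(\mu(t))$ and the key observation $\partial_s F = f(t)\,\xi$. The paper phrases the argument as flatness of the pullback connection on $F^*\D\to[0,1]\times[a,b]$ (its curvature being $R^N(f(t)\xi,\dot\mu_s)$), while you invoke the equivalent variation-of-parallel-transport formula; the content is identical.
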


\begin{proof}
Let $\mu:[a,b]\to M$ be a curve satisfying \eqref{condintegr0} and with $\mu(a)=x$. Let $W=[0,1]\times [a,b]$. Consider the map
$$F:W\to M,$$
$$F(s,t)=\varphi_{sf(t)}(\mu(t)).$$
Consider the pull-back bundle $F^*\D\to W$ and the induced connection $\nabla^F$ along the map $F$. It holds $$\frac{\d}{\d s}F(s,t)=f(t)\cdot\xi_{F(s,t)}.$$
This implies that the curvature of the induced connect $\nabla^F$ is determined by 
$R^N(\xi,\cdot)$, which by assumption vanishes, so that the induced connection is flat and the parallel transport along the curve $\mu*\tilde\mu^{-1}$ is trivial. \end{proof}

% \subsection{Similarly} we may consider algebra  generated by  $$(\tau^N_\gamma)^{-1}\circ R^N(\alpha)\circ\tau^N_\gamma, $$  where $\gamma$ is a piece-wise smooth curve starting at $x$ (and satisfying \eqref{condintegr}), and $\alpha$ is a bi-vector on $\D$ at the endpoint of $\gamma$ such that $\d\theta(\alpha)=0$.

\subsection{A Codazzi condition}

We will say that a
% symmetric field of endomorphisms $A$ of $\D$ is Codazzi if $$(\nabla_XA)Y=(\nabla_YA)X,\quad\text{for all } X,Y\in\D.$$
a sub-Riemannian contact structure is {\em Codazzi} if the sub-torsion $\bt$ satisfies the Codazzi equation with respect to the Schouten connection, i.e.,
\[(\nabla_X\bt)Y=(\nabla_Y\bt )X,\quad\text{ for all $X,Y\in \D$.}\]
From Lemma \ref{lemRAxi} we have the following.

\begin{cor} A  a contact sub-Riemannian manifold  $(M,\theta,g)$ is Codazzi if and only if  the curvature tensor $R^\bt$ of the adapted connection $\nabla^\bt$ satisfies 
	$R^\bt(\xi,\cdot)=0$.
\end{cor}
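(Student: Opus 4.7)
The plan is to extract the equivalence directly from Lemma~\ref{lemRAxi} by introducing the auxiliary trilinear form
\[
T(X,Y,Z):=g\bigl((\nabla_Y\bt)X,Z\bigr),\qquad X,Y,Z\in\D,
\]
and rewriting both the Codazzi condition and the condition $R^\bt(\xi,\cdot)=0$ as symmetry statements about $T$.

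First, I would observe that since $\bt$ is a symmetric endomorphism of $\D$ with respect to $g$ and $\nabla g=0$, the covariant derivative $\nabla_Y\bt$ is again $g$-symmetric. Consequently $T$ is symmetric in its first and third arguments,
\[
T(X,Y,Z)=g\bigl(X,(\nabla_Y\bt)Z\bigr)=T(Z,Y,X).
\]
Next, I would reformulate each side of the claimed equivalence. The Codazzi condition $(\nabla_X\bt)Y=(\nabla_Y\bt)X$ holds for all $X,Y\in\D$ if and only if $T(Y,X,Z)=T(X,Y,Z)$ for all $X,Y,Z\in\D$, i.e.\ $T$ is symmetric in its first two arguments. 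On the other hand, Lemma~\ref{lemRAxi} rewrites as
\[
g\bigl(R^\bt(\xi,X)Y,Z\bigr)=T(X,Y,Z)-T(X,Z,Y),
\]
so $R^\bt(\xi,\cdot)=0$ is equivalent to $T$ being symmetric in its last two arguments.

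Finally, combining the already established $(1,3)$-symmetry with either the $(1,2)$-symmetry or the $(2,3)$-symmetry forces $T$ to be totally symmetric on $\D^{\otimes 3}$, and hence the remaining symmetry holds as well. This yields the equivalence of the Codazzi condition and the vanishing of $R^\bt(\xi,\cdot)$. There is no real obstacle here beyond keeping track of the three symmetries of $T$; all analytic content is already contained in Lemma~\ref{lemRAxi}, and the step that might be overlooked is the automatic $(1,3)$-symmetry coming from $g$-symmetry of $\nabla_Y\bt$, which is precisely what bridges the two conditions.
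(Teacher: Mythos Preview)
Your proof is correct and follows the same route as the paper, which simply records the corollary as an immediate consequence of Lemma~\ref{lemRAxi}. You have spelled out the symmetry bookkeeping that the paper leaves implicit, namely that the built-in $(1,3)$-symmetry of $T$ (from $g$-symmetry of $\nabla_Y\bt$) upgrades either of the $(1,2)$- or $(2,3)$-symmetries to full symmetry.
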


From Propositions~\ref{product-prop} and~\ref{propHhatistrivial} we obtain the following result.

\begin{cor} Let $(M,\theta,g)$ be a contact sub-Riemannian manifold that is Codazzi and such that the Reeb vector field $\xi$ is complete. Then it holds
$\Hol_x(\nabla)=\widetilde{\Hol}_x(\nabla^\bt)$.
\end{cor}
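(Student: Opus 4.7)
The plan is essentially to chain together the three preceding results, namely the preceding Corollary characterising the Codazzi condition as $R^\bt(\xi,\cdot)=0$, together with Proposition~\ref{product-prop} and Proposition~\ref{propHhatistrivial}, both applied to the extended connection $\nabla^N=\nabla^\bt$. Since all the real work has already been carried out in these three statements, the proof is almost a one-liner and there is no serious obstacle to overcome; the only thing to check is that the hypotheses match up correctly.

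First, I would invoke the preceding Corollary: the Codazzi assumption $(\nabla_X\bt)Y=(\nabla_Y\bt)X$ for all $X,Y\in\D$ is by definition equivalent, through Lemma~\ref{lemRAxi}, to the vanishing $R^\bt(\xi,\cdot)=0$ of the mixed components of the adapted curvature. Thus with $N=\bt$ we are in the situation of Proposition~\ref{propHhatistrivial}, whose additional standing hypothesis — completeness of the Reeb vector field $\xi$ — is precisely the second assumption in the statement. We therefore conclude that the connected subgroup $\widehat{\Hol}_x(\nabla^\bt)\subseteq \widetilde{\Hol}_x(\nabla^\bt)$ is trivial.

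Next, Proposition~\ref{product-prop}, again with $N=\bt$ and using completeness of $\xi$, gives the factorisation
\[
\widetilde{\Hol}_x(\nabla^\bt) \;=\; \widehat{\Hol}_x(\nabla^\bt)\cdot \Hol_x(\nabla).
\]
Substituting the triviality of $\widehat{\Hol}_x(\nabla^\bt)$ obtained in the previous step, the right-hand side collapses to $\Hol_x(\nabla)$, yielding $\widetilde{\Hol}_x(\nabla^\bt)=\Hol_x(\nabla)$, which is the claim. The inclusion $\Hol_x(\nabla)\subseteq \widetilde{\Hol}_x(\nabla^\bt)$ is of course also visible directly, as horizontal loops automatically satisfy $\int_\gamma\theta=0$, but is not needed for the argument. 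I expect no difficulties; the whole point of the Codazzi condition is precisely that it is the algebraic trigger that makes Proposition~\ref{propHhatistrivial} applicable to the adapted connection.
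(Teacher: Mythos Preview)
Your proof is correct and follows exactly the same approach as the paper, which simply states that the corollary follows from Propositions~\ref{product-prop} and~\ref{propHhatistrivial}. You have merely spelled out the straightforward chain of implications in more detail.
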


This and Theorem \ref{theorHoltild} imply the following.

\begin{cor}\label{cordech}
Let $(M,\theta,g)$ be a contact sub-Riemannian manifold that is Codazzi and such that the Reeb vector field $\xi$ is complete. Then either ${\Hol}_x(\nabla)={\Hol}_x(\nabla^\bt)$, or ${\Hol}_x(\nabla)$ is a co-dimension one  normal subgroup of  ${\Hol}_x(\nabla^\bt)$.
\end{cor}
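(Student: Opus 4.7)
The plan is essentially to chain together the two results that immediately precede the statement, so the proof should be very short. First, I would invoke the previous corollary, which under the Codazzi hypothesis and completeness of $\xi$ yields the identification
\[
\Hol_x(\nabla)=\widetilde{\Hol}_x(\nabla^\bt).
\]
This already translates the dichotomy we want to prove for $\Hol_x(\nabla)$ inside $\Hol_x(\nabla^\bt)$ into the corresponding dichotomy between $\widetilde{\Hol}_x(\nabla^\bt)$ and $\Hol_x(\nabla^\bt)$.

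Second, I would apply Theorem~\ref{theorHoltild} with the extended connection $\nabla^N=\nabla^\bt$. The adapted connection is metric by construction, as~(\ref{defA1}) with $N=\bt$ trivially verifies the compatibility condition. Hence Theorem~\ref{theorHoltild} applies and delivers exactly the required alternative: $\widetilde{\Hol}_x(\nabla^\bt)$ is either equal to $\Hol_x(\nabla^\bt)$ or a codimension-one normal Lie subgroup thereof. Substituting through the identification from the previous corollary then yields the statement.

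There is no serious obstacle to overcome here; the substantive work has already been done, namely in Theorem~\ref{theorHoltild}, whose proof factorises the holonomy of the combined connection $\tnab^N=\nabla^\bt\oplus\nabla^\theta$ through the non-flat line-bundle connection $\nabla^\theta$, and in Propositions~\ref{product-prop} and~\ref{propHhatistrivial} that together produced the preceding corollary. The only point worth a brief sanity check is that the codimension-one normality is genuinely transported through the identification $\Hol_x(\nabla)=\widetilde{\Hol}_x(\nabla^\bt)$, but this is tautological since that identification is an equality of subgroups of $\Or(\D_x,g_x)$, not merely an abstract isomorphism, so normality and codimension in the ambient group $\Hol_x(\nabla^\bt)$ are preserved on the nose.
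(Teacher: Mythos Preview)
Your proof is correct and follows exactly the paper's approach: the paper simply states that the corollary follows from the preceding corollary (giving $\Hol_x(\nabla)=\widetilde{\Hol}_x(\nabla^\bt)$) together with Theorem~\ref{theorHoltild}. Your additional remarks verifying that $\nabla^\bt$ is metric and that the equality of subgroups transports normality and codimension are accurate but already implicit in the paper's one-line justification.
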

For the final result of this section we introduce the notion of (weak) curvature tensors. 
For a subalgebra $\h\subseteq\so(l)$, the space $\P(\h)$ of {\em weak curvature tensors}, as defined in \cite{Leistner}, consists of linear maps from $\Real^l$ to $\h$ satisfying the identity
$$\underset{X Y Z}{\Su}\langle P(X)Y,Z\rangle =0,  \quad\text{for all } X,Y,Z\in\Real^l.$$
Let $\R(\h)$ 
denote the space of algebraic curvature tensors, i.e.,~of elements in $\Lambda^2 (\Real^l)^*\otimes \h$ that satisfy Bianchi-identity, 
$$\underset{X Y Z}{\Su}R(X,Y)Z=0,  \quad\text{for all } X,Y,Z\in\Real^l.$$
Since every $R\in \R(\h)$ satisfies pairwise symmetry, $\langle R(X,Y)Z,U\rangle=\langle R(Z,U)X,Y\rangle$, for
any  $R\in\R(\h)$ and  $X\in \Real^l$, we have $\R(X,.)\in \P(\h)$. 
From Lemma~\ref{lemRAxi} it follows that $R^\bt(\xi,\cdot)\in \P(\hol(\nabla^\bt))$, however 
$R^\bt |_{\Lambda^2\D}$ does not necessaily satisfy the Bianchi identity and hence is  not necessarily in $\R(\h)$. Nevertheless, we get the following.

\begin{prop}\label{propRieamHolor} Let $(M,\theta,g)$ be a contact sub-Riemannian manifold that is not Codazzi. Suppose that the holonomy algebra $\hol(\nabla^\bt)\subseteq\so(2m)$ of the adapted connection is irreducible. Then either 
$\hol(\nabla^\bt)\subseteq\so(2m)$ is the holonomy algebra of a Riemannian manifold or
$$\hol(\nabla^\bt)=\sp(k)\oplus\u(1),\quad m=2k.$$
\end{prop}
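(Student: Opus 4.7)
The strategy is to reduce the claim to Leistner's classification of irreducible weak-Berger subalgebras of $\so(n)$ in \cite{Leistner}. The first step is to translate the non-Codazzi hypothesis into a statement about weak curvature tensors: by the corollary just before the proposition, being not Codazzi is equivalent to $R^\bt(\xi,\cdot)\not\equiv 0$, and Lemma~\ref{lemRAxi} places this tensor pointwise in $\P(\hol(\nabla^\bt))$. Setting $\h:=\hol(\nabla^\bt)\subseteq\so(2m)$, which is irreducible by hypothesis, we thus obtain a non-zero element of $\P(\h)$.

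The core step is to consider the subspace
\[
\h_0\ :=\ \mathrm{span}\bigl\{\,P(X)\ \colon\ P\in\P(\h),\ X\in\Real^{2m}\,\bigr\}\subseteq\h,
\]
which is non-zero by the previous paragraph. Using the natural $\h$-module structure on $\P(\h)$ given by $(A\cdot P)(X)=[A,P(X)]-P(AX)$, the identity $[A,P(X)]=(A\cdot P)(X)+P(AX)$ shows that $\h_0$ is an ideal of $\h$. I would then branch on whether $\h_0=\h$ or $\h_0\subsetneq\h$. If $\h_0=\h$, then $\h$ is weak-Berger in the sense of \cite{Leistner}, and the classification therein identifies $\h$ as a Riemannian holonomy algebra, giving the first alternative. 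Otherwise $\h_0$ is a proper non-zero ideal in the irreducible $\h\subseteq\so(2m)$, so by the structure theory of irreducible subalgebras of $\so(2m)$ the centre of $\h$ is one-dimensional and acts by a complex structure, giving $\h\subseteq\u(m)$ and $\h=\h_0\oplus\u(1)$ with $\h_0$ semisimple. Since $\u(1)$ commutes with $\h_0$ and $\h$ acts irreducibly on $\Real^{2m}$, so does $\h_0$. The identity $\P(\h)=\P(\h_0)$ (values being already in $\h_0$) shows that $\h_0$ is weak-Berger as well, so Leistner's list applies once more, and combined with the fact that the only irreducible semisimple Riemannian holonomy algebras sitting inside $\su(m)$ are $\su(m)$ and $\sp(k)$ with $m=2k$, I obtain either $\h=\u(m)$ (a Riemannian holonomy already included in the first alternative) or the exceptional case $\h=\sp(k)\oplus\u(1)$.

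The main obstacle I anticipate is in the case where $\h_0\subsetneq\h$: one must carefully combine the classification of irreducible subalgebras of $\so(2m)$ with non-trivial centre, the fact that $\h_0$ inherits both irreducibility and the weak-Berger property from $\h$, and Leistner's classification, in order to pin down the semisimple part as $\su(m)$ or $\sp(k)$ and exclude any other entry of Berger's list from admitting a commuting $\u(1)$ inside $\so(2m)$. Once these representation-theoretic constraints are established, the dichotomy stated in the proposition is immediate.
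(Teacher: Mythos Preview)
Your first paragraph matches the paper exactly: non-Codazzi plus Lemma~\ref{lemRAxi} yields $0\neq R^\bt(\xi,\cdot)\in\P(\h)$ for $\h=\hol(\nabla^\bt)$, and the span $\h_0$ of all images of $\P(\h)$ is a non-zero ideal of $\h$ (the paper phrases it as ``the subalgebra generated by images''; since you show it is an ideal, the two coincide). From here, however, the paper takes a different and shorter route. It does \emph{not} split into $\h_0=\h$ versus $\h_0\subsetneq\h$. Instead it applies \cite{Leistner} to $\h_0$ to conclude that $\h_0$ is a Riemannian holonomy algebra, hence $\R(\h_0)\neq 0$, hence $\R(\h)\neq 0$, and then quotes \cite[Theorem~4.6]{C-S} (Cleyton--Swann): an irreducible subalgebra of $\so(n)$ with non-trivial curvature space is either a Riemannian holonomy algebra or $\sp(k)\oplus\u(1)$. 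That one citation disposes of both of your cases simultaneously.

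Your Case~2 argument has the gap you yourself flag, and it is genuine. The implication ``$\h_0$ is a proper non-zero ideal in the irreducible $\h$, so the centre of $\h$ is one-dimensional and $\h=\h_0\oplus\u(1)$ with $\h_0$ irreducible'' does not follow from what you have at that stage: nothing you have written rules out a semisimple complementary ideal, nor guarantees that $\h_0$ acts irreducibly on $\Real^{2m}$ so that the irreducible form of Leistner's theorem would apply to it. What really pins the situation down is that $\h_0\neq 0$ together with Leistner forces $\R(\h_0)\neq 0$, hence $\R(\h)\neq 0$, and then one needs exactly the Cleyton--Swann classification of irreducible $\h\subseteq\so(n)$ with $\R(\h)\neq 0$ to see that the only non-Berger possibility is $\sp(k)\oplus\u(1)$. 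Completing your Case~2 without citing \cite{C-S} would therefore amount to reproving the relevant portion of that result; the paper simply imports it.
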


\begin{proof} 
From Lemma~\ref{lemRAxi} and the assumption that  the sub-Riemannian structure is not Codazzi  we have $0\not= R^\bt(\xi,\cdot)\in \P(\hol(\nabla^\bt))$. Let $\h\subseteq \hol(\nabla^\bt)$ be the subalgebra generated by images of elements from $\mathcal P(\hol(\nabla^\bt))$. The main result form \cite{Leistner} states that $\h\subseteq\so(2m)$ is the holonomy algebra of a Riemannian manifold. This implies that $\R(\h)\neq 0$ and hence $\R(\hol(\nabla^\bt))\neq 0$. Then the statement of the proposition follows from~\cite[Theorem~4.6]{C-S}. \end{proof}
From the proof we see that the result also holds when the assumption of non-Codazzi is replaced by the assumption $\R(\hol(\nabla^\bt))\not=0$. Then it follows directly from~\cite[Theorem~4.6]{C-S}.

\subsection{A further analysis of the Codazzi case}\label{secCodazziCase} In this section we generalize Corollary \ref{cordech} relaxing   the completeness assumption  for $\xi$.

\begin{theorem}\label{thHolnabl}
	Let $(M,\theta,g)$ be a contact sub-Riemannian manifold. Suppose that $\bt$ is Codazzi.  Then one of following conditions holds:
	\begin{itemize}
		\item[1.] $\Hol_x(\nabla)=\Hol_x(\nabla^\bt)$;
		\item[2.]  $\Hol_x(\nabla)\subset\Hol_x(\nabla^\bt)$ is a normal subgroup of co-dimension one.
	\end{itemize} 
\end{theorem}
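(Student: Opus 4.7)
My plan is to reduce to Corollary \ref{cordech} by localising its argument. The goal is to establish
\[
\Hol_x(\nabla)=\widetilde{\Hol}_x(\nabla^\bt),
\]
after which Theorem \ref{theorHoltild} (which itself does not require completeness of $\xi$) yields the stated dichotomy between $\widetilde{\Hol}_x(\nabla^\bt)$ and $\Hol_x(\nabla^\bt)$, and via the displayed equality this becomes the claimed dichotomy for $\Hol_x(\nabla)\subseteq\Hol_x(\nabla^\bt)$.

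The inclusion $\Hol_x(\nabla)\subseteq\widetilde{\Hol}_x(\nabla^\bt)$ is immediate from the definitions, so the work lies in the converse. Fix a loop $\mu:[a,b]\to M$ at $x$ with $\int_\mu\theta=0$ and try to imitate the complete-case argument of Propositions \ref{product-prop} and \ref{propHhatistrivial}. In the complete case one forms the homotopy $\mu_s(t)=\varphi_{sf(t)}(\mu(t))$, where $f(t)=-\int_a^t\theta(\dot\mu)$, joining $\mu_0=\mu$ to the horizontal loop $\mu_1=\tilde\mu$, and shows that the pulled-back connection on $F^*\D$ over the rectangle $W=[0,1]\times[a,b]$ has vanishing curvature in the $s$-direction thanks to the Codazzi identity $R^\bt(\xi,\cdot)=0$ (cf.\ the corollary of Lemma \ref{lemRAxi}); this gives $\tau^\bt_\mu=\tau^\bt_{\tilde\mu}=\tau_{\tilde\mu}\in\Hol_x(\nabla)$. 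Without completeness of $\xi$ the map $F$ need not be defined on all of $W$, but the image $\mu([a,b])$ is compact, so there is $\epsilon>0$ such that $\varphi_t$ is defined on a neighbourhood of $\mu([a,b])$ for all $|t|<\epsilon$. I partition $[0,1]\times[a,b]$ into rectangles fine enough that $F$ is defined on each, apply the flatness argument of Proposition \ref{propHhatistrivial} on each rectangle to obtain local triviality of the parallel transport around its boundary, and stitch the rectangles into a global triviality statement for the homotopy.

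The main obstacle is the stitching. Individual sub-arcs of $\mu$ generally fail the condition $\int\theta=0$, so the local deformations do not close into loops, and the subdivision in the $s$-direction must be adapted to where $F$ is defined near the moving base points $\varphi_{sf(t)}(\mu(t))$. One needs to track how the partial flows move the endpoints across the partition and assemble the locally trivial boundary transports into a single triviality statement on $W$; the fact that $R^\bt(\xi,\cdot)=0$ is a purely pointwise condition, and hence survives the localisation, is what ultimately makes this assembly possible and allows the completeness hypothesis in Corollary \ref{cordech} to be dropped.
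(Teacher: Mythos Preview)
Your overall strategy --- prove $\Hol_x(\nabla)=\widetilde{\Hol}_x(\nabla^\bt)$ and then invoke Theorem~\ref{theorHoltild} --- is sound, and the paper's proof does in effect establish this equality. But your proposed execution has a genuine gap, and the paper fills it by a different mechanism that you do not mention.

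The gap is the ``stitching''. Partitioning the rectangle $[0,1]\times[a,b]$ does nothing for the definition of $F(s,t)=\varphi_{sf(t)}(\mu(t))$: if $|f(t_0)|$ exceeds the maximal flow time of $\xi$ at $\mu(t_0)$, then $F(s,t_0)$ is simply undefined for $s$ beyond some threshold, regardless of how you subdivide the domain. What \emph{can} be done (and what the paper does) is partition $\mu$ into arcs $\gamma_i$ on which the horizontalization $\tilde\gamma_i$ exists; but then the pieces are joined by integral curves $\lambda_i$ of $\xi$, and the $\nabla^\bt$-transport $\tau^\bt_{\lambda_i}$ along those vertical segments has no a~priori relation to $\Hol_x(\nabla)$. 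The pointwise vanishing $R^\bt(\xi,\cdot)=0$ lets you commute such a vertical transport past a horizontal one \emph{provided the flow exists along the whole horizontal curve for the required time} --- which is exactly what fails in the incomplete case. So flatness alone does not give a mechanism for the assembly; an additional idea is needed to control the $\tau^\bt_{\lambda_i}$.

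The paper supplies this idea via the Wagner connection. One writes $\nabla^W_\xi=\nabla^\bt_\xi+C$ with $C\in\so(\D)$, decomposes $C=C^\nabla+C^\bot$ relative to $\hol(\nabla)\oplus\hol(\nabla)^\bot$ inside $\hol(\nabla^\bt)$, and uses the Codazzi condition together with $\Hol(\nabla^W)=\Hol(\nabla)$ to show that $C^\bot$ is $\nabla^\bt$-parallel and central, so that $\hol(\nabla^\bt)=\hol(\nabla)\oplus\langle C^\bot\rangle$. An ODE argument along an integral curve $\lambda$ of length $r$ then gives $\tau^\bt_\lambda=e^{rC^\bot}\circ\tau^W_\lambda\circ B$ with $B\in\Hol(\nabla)$, and since $\Hol(\nabla^W)=\Hol(\nabla)$ one may replace $\tau^W_\lambda$ by a horizontal transport with the same endpoints. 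Composing over the partition and using that $C^\bot$ is $\nabla$-parallel yields, for any loop $\gamma$,
\[
\tau^\bt_\gamma=\exp\!\Big(\Big(\int_\gamma\theta\Big)\,C^\bot_x\Big)\circ\tau_\mu
\]
for some horizontal loop $\mu$. The dichotomy is then immediate (and for $\int_\gamma\theta=0$ this gives your desired equality $\widetilde{\Hol}_x(\nabla^\bt)=\Hol_x(\nabla)$). The Wagner connection and the parallel element $C^\bot$ are precisely the algebraic handle on the vertical transports that your outline lacks.
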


\begin{proof}  Since the subbundle $\hol(\nabla)\subseteq\hol(\nabla^\bt)$ is $\nabla$-parallel,   $\nabla$ preserves the decomposition
$$
\hol(\nabla^\bt)=\hol(\nabla) \oplus \hol(\nabla)^\bot.$$ 
Let $C$ be the skew part of $N^W$ for the Wagner connection $\nabla^W$.  Since $C=\tfrac{1}{4m} R^\bt(\d \theta^{-1})$, we have that $C\in \hol(\nabla^\bt) $. Let $$C=C^\nabla+C^\bot$$ be the corresponding decomposition of the skew-symmetric part $C$ of $N^W$.
For the following recall that from Theorem~\ref{thholW} we have that  \begin{equation}
\label{holWhol}\Hol(\nabla^W)=\Hol(\nabla).\end{equation}

\begin{lem} It holds that $$\nabla^\bt C^\bot=0,$$ and
	$$\hol(\nabla^\bt)=\hol(\nabla) \oplus \left<C^\bot\right>  $$ is the direct sum of ideals.
\end{lem}

\begin{proof}  Since $\bt$ is Codazzi, it holds $R^\bt(\xi,,X)=0$, so that 
$$R^W(\xi,X)=-\nabla_XC=-\nabla_XC^\nabla-\nabla_XC^\bot,\quad\text{for all } X\in\Gamma(\D).$$
Since $R^W(\xi,X)$ takes values in $\hol(\nabla)$, it holds 
$$\nabla C^\bot=0.$$ This implies that $ C^\perp$ commutes with $\Hol(\nabla)$, and with~(\ref{holWhol})  yields  $\nabla^W C^\bot=0$.
%$$L_\xi C^\bot+[\bt,C^\bot]+[C^\nabla,C^\bot]=0.$$
%Since $\nabla C^\bot=0$  it holds that $[\hol(\nabla),C^\bot]=0$. 
We conclude that
$$\nabla^\bt_\xi C^\bot=(\nabla^W_\xi-C^\nabla-C^\bot)\cdot C^\bot=0.$$
Next, since
$$\nabla^\bt_\xi =\nabla^W_\xi-C^\nabla-C^\bot,$$
from~(\ref{holWhol}) we see that $\nabla^\bt_\xi$ preserves $\Gamma(\hol(\nabla))$. This implies that the bundle $\hol(\nabla)$ is $\nabla^\bt$-parallel.
For each point $y\in M$ and vectors $X,Y\in C_x$ it holds 
$$R^\bt_y(X,Y)=R^W_y(X,Y)+(d\theta)_y(X,Y)C_y\in\hol_y(\nabla)\oplus\Real (C^\bot)_y.$$
We conclude that for each curve $\gamma$ from $x$ to $y$ it holds
$$(\tau^\bt_\gamma)^{-1}\circ R^\bt_y(X,Y)\circ\tau^\bt_\gamma\in  \hol_x(\nabla)\oplus\Real (C^\bot)_x.$$ Note also that $R^\bt(\xi,\cdot)=0.$
Thus, by the Ambrose--Singer Theorem, 
$$\hol_x(\nabla^\bt)=\hol_x(\nabla) \oplus \left<C_x^\bot\right>.$$ \end{proof}

Note that the statement of the above Lemma also holds  in the case $C^\bot=0$.

\begin{lem}\label{lemxi1} Let $\lambda:[0,r]\to M$ be an integral curve of the vector field $\xi$. Then it holds
	$$\tau_\lambda^\bt=e^{r\cdot C^\bot_{\lambda(r)}}\circ \tau_\lambda^W\circ B(r),$$ where $B(r)\in\Hol_{\lambda(0)}(\nabla)$.
\end{lem}

\begin{proof}  For each $s\in [0,r]$, let 
$$B(s)=((\tau_\lambda^W)_s)^{-1}\circ e^{-s\cdot C^\bot_{\lambda(s)}}\circ (\tau_\lambda^\bt)_s:\D_{\lambda(0)}\to \D_{\lambda(0)},$$
be a curve in $\GL(\D_{\lambda(0)})$,
where $$(\tau_\lambda^\bt)_s,(\tau_\lambda^W)_s:\D_{\lambda(0)}\to\D_{\lambda(s)}$$ are parallel transports along the curve $\lambda|_{[0,s]}$.
Let $X(s)$ be a $\nabla^\bt$-parallel section of $\D$ along the curve $\lambda$, i.e., it holds
$$\nabla^\bt_{\dot\lambda(s)}X(s)=0,\quad (\tau_\lambda^\bt)_s X(0)=X(s). $$
Then, using that   $\nabla^W C^\perp=0$, we have 
\begin{eqnarray*}
0&=&\nabla_{\dot\lambda(s)}^\bt X(s)
\\
&=&\left(\nabla^W_{\dot\lambda(s)}-C_{\lambda(s)} \right) \left( e^{s\cdot C^\bot_{\lambda(s)}}\circ (\tau_\lambda^W)_s\circ B(s)\right) X(0)
\\
&=& \left(C^\bot_{\lambda(s)}\circ e^{s\cdot C^\bot_{\lambda(s)}}\circ (\tau_\lambda^W)_s\circ B(s)\right) X(0)+\left( e^{s\cdot C^\bot_{\lambda(s)}}\circ (\tau_\lambda^W)_s\circ B'(s)\right) X(0)
\\
&&{}-\left(C_{\lambda(s)}\circ e^{s\cdot C^\bot_{\lambda(s)}}\circ (\tau_\lambda^W)_s\circ B(s)\right) X(0).
\end{eqnarray*}
With  $[C,C^\perp]=0$, this implies 
$$(\tau_\lambda^W)_s\circ B'(s)-C^\nabla_{\lambda(s)}\circ (\tau_\lambda^W)_s\circ B(s)=0,$$
so that
$$ B'(s)\circ B^{-1}(s)=((\tau_\lambda^W)_s)^{-1}\circ C^\nabla_{\lambda(s)}\circ (\tau_\lambda^W)_s.$$ 
Since $C^\nabla\in \hol(\nabla)$, this  shows that $B(s)$ is the development of the curve 
$((\tau_\lambda^W)_s)^{-1}\circ C^\nabla_{\lambda(s)}\circ (\tau_\lambda^W)_s\in\hol_{\gamma(0)}(\nabla)$. \end{proof}

\begin{lem}\label{lemxi2} Let $\mu:[a,b]\to M$ be an arbitrary curve. Then there exist a horizontal curve $\bar\mu:[a,b]\to M$ such that
	$$\bar\mu(a)=\mu(a),\quad \bar\mu(b)=\mu(b),\quad \tau_{\bar\mu}=\tau^W_\mu.$$ 
\end{lem}

\begin{proof}  Let $\check\mu$ be a horizontal curve connecting the points $\mu(a)$ and $\mu(b)$. We obtain the loop $\mu*\check\mu^{-1}$ at the point $\mu(a)$.
Since the holonomy groups of the connections $\nabla$ and $\nabla^W$ coincide, there exists a horizontal loop $\hat\mu$ at the point $\mu(a)$ such that
$$\tau^W_{\mu*\check\mu^{-1}}=\tau_{\hat\mu}.$$ This implies that 
$\tau^W_{\mu}=\tau_{\hat\mu*\check\mu}$.
\end{proof}

Let $\gamma:[a,b]\to M$ be an arbitrary loop at the point $x$.
Since $\gamma([a,b])\subseteq M$ is compact, there exist
numbers $$a_0=a< a_1<\dots< a_{k-1}<a_k=b$$ such that,
for each restriction
$$\gamma_i=\gamma|_{[a_{i-1},a_i]},$$ the horizontal curve $\tilde \gamma_i$ defined by \eqref{deftildmu} is well-defined. 
%\tcom{Need to explain $\tilde{\gamma}$ here!}\acom{Is it enough to refer to \eqref{deftildmu}?}
By the definition of $\tilde\gamma_i$, there exists an integral curve $\lambda_i(s)$, $s\in [0,r_i]$ of the vector field $\xi$ connecting the end-points $\tilde\gamma(a_i)$ of the curve $\tilde\gamma_i$ with the end-point $\gamma(a_i)$ of the curve $\gamma_i$, and it holds 
% \tcom{$r_i=- \int ...$?}\acom{here with +, because $\lambda_i$ goes from end-points of  $\tilde\gamma_i$ to the end-point of $\gamma_i$ }
$$r_i= \int_{\gamma_i}\theta.$$
Since $R^\bt(\xi,\cdot)=0$,  
%\tcom{Why the implication?} \acom{explained} 
the arguments from the proof of Proposition \ref{propHhatistrivial} imply
$$\tau^\bt_{\gamma_i}=\tau^\bt_{\lambda_i}\circ \tau_{\tilde\gamma_i}.$$
From Lemmas \ref{lemxi1} and \ref{lemxi2} it follows that there exists a horizontal curve $\mu_i$ connecting the points $\gamma(a_{i-1})$ and $\gamma(a_i)$ such that 
$$\tau^\bt_{\gamma_i}=e^{r_iC^\bot_{\gamma(a_i)}}\circ\tau_{\mu_i}.$$
Using this, the equality
$$\tau^\bt_\gamma=\tau^\bt_{\gamma_k}\circ\cdots\circ\tau^\bt_{\gamma_1},$$
and applying repeatedly the fact that $C^\bot$ is $\nabla$-parallel, we get 
$$\tau^\bt_\gamma=e^{(r_1+\cdots +r_k)C^\bot_x}\circ\tau_{\mu},$$
where $\mu$ is a horizontal loop at the point $x$.
It is obvious that 

$$r_1+\cdots +r_k=\int_\gamma\theta.$$
Thus, $$\tau^\bt_\gamma=\exp\left(\left(\int_\gamma\theta\right)\cdot {C^\bot_x}\right)\circ\tau_{\mu}.$$
Since
$C^\perp$ commutes with $\Hol(\nabla)$, this implies that the subgroup $\Hol_x(\nabla)\subseteq\Hol_x(\nabla^\bt)$ is normal. It is obvious that the map
$$\lambda:\Real_+\to\Hol_x(\nabla^\bt)/\Hol_x(\nabla),$$
$$\lambda:\exp\left(\int_\gamma\theta\right)\mapsto
\exp\left(\left(\int_\gamma\theta\right)\cdot {C^\bot_x}\right)\cdot \Hol_x(\nabla)$$
is well-defined and surjective. \end{proof}

\subsection{Holonomy  of K-contact sub-Riemannian manifolds}\label{secbt0}
In this section we will recall some results from \cite{GalHolK}, where $\xi$ was assumed to be complete. Thanks to Section \ref{secCodazziCase}, for these results we may drop the completeness assumption for $\xi$.
Recall that a contact sub-Riemannian manifold $(M,\theta,g)$ is called {\em K-contact} if it is sub-torsion-free, i.e.~$\bt=0$. 
Then  with Section~\ref{secCodazziCase} the results from \cite[Sections~7 and 8]{GalHolK} can be restated as in the following three theorems.

\begin{theorem}
	Let $(M,\theta,g)$ be a  K-contact sub-Riemannian manifold.  
	Then $\hol(\nabla^\bt)$ is the holonomy algebra of a Riemannian manifold. Moreover, it holds that either $\Hol(\nabla)=\Hol(\nabla^\bt)$ or $\Hol(\nabla)\subseteq\Hol(\nabla^\bt)$ is a normal subgroup of co-dimension one.
\end{theorem}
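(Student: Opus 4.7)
The theorem splits into two independent claims. First, the dichotomy between $\Hol(\nabla)$ and $\Hol(\nabla^\bt)$ follows immediately from Theorem~\ref{thHolnabl}, since the K-contact assumption $\bt=0$ trivially satisfies the Codazzi condition $(\nabla_X\bt)Y=(\nabla_Y\bt)X$. This observation is precisely what allows us to drop the completeness assumption on $\xi$ that was used in the original proof in \cite{GalHolK}.

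For the claim that $\hol(\nabla^\bt)$ is a Riemannian holonomy algebra, my plan is to construct a local Riemannian submersion. Since $\mathcal L_\xi\theta=0$ and $\mathcal L_\xi g=2\bt=0$, the Reeb field $\xi$ is a unit Killing vector field for the Riemannian metric $\hat g:=g+\theta\otimes\theta$ on $M$, and it is orthogonal to $\D$. Around any point $x\in M$, the nowhere vanishing $\xi$ generates a locally free flow, so on a sufficiently small neighborhood $U$ of $x$ there is a smooth quotient $q\colon U\to N$, and $N=U/\xi$ inherits a well-defined Riemannian metric $h$ characterised by $q^*h=g|_{\D}$. The horizontal lift then identifies $\D|_U$ with $q^*TN$.

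The key step will be to show that, under this identification, $\nabla^\bt$ corresponds to the pull-back $q^*\nabla^h$ of the Levi-Civita connection $\nabla^h$ of $(N,h)$. For horizontal lifts $\tilde X,\tilde Y$ of $X,Y\in\Gamma(TN)$, the projection $\pi[\tilde X,\tilde Y]\in\D$ is the horizontal lift of $[X,Y]_N$, since the vertical component $-\d\theta(\tilde X,\tilde Y)\xi$ of the bracket is annihilated by $\pi$. Consequently, Koszul's formula~(\ref{SchoutenCon}) for $\nabla$ on $\D$ reproduces the Koszul formula for $\nabla^h$ on $N$, giving $\nabla_{\tilde X}\tilde Y=\widetilde{\nabla^h_XY}$. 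In the vertical direction, $\nabla^\bt_\xi\tilde Y=[\xi,\tilde Y]=0$, because horizontal lifts are $\xi$-invariant. Hence $\nabla^\bt$-parallel transport along any curve in $U$ coincides, via $\d q$, with $\nabla^h$-parallel transport along the projected curve in $N$.

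To deduce $\hol_x(\nabla^\bt)=\hol_{q(x)}(N,h)$, one observes that every loop in $M$ based at $x$ projects to a loop in $N$ based at $q(x)$, and conversely every sufficiently small loop in $N$ based at $q(x)$ lifts to a curve in $U$ starting at $x$ that can be closed into a loop at $x$ by concatenation with a $\xi$-integral curve, along which $\nabla^\bt$-parallel transport is trivial. This exhibits $\hol(\nabla^\bt)$ as the holonomy algebra of the Riemannian manifold $(N,h)$. The main technical obstacle will be the careful verification of the connection correspondence $\nabla^\bt\leftrightarrow q^*\nabla^h$; this really hinges on the K-contact condition $\bt=0$, since otherwise the $\xi$-derivative would twist horizontal sections by $\bt$ and the identification with a Riemannian Levi-Civita connection would break down, in line with Proposition~\ref{propRieamHolor}.
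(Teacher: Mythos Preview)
Your treatment of the dichotomy is correct and matches the paper: both invoke Theorem~\ref{thHolnabl}, noting that $\bt=0$ is trivially Codazzi, to remove the completeness hypothesis from~\cite{GalHolK}.

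For the claim that $\hol(\nabla^\bt)$ is a Riemannian holonomy algebra, the paper simply cites~\cite{GalHolK}, so your direct transverse-quotient argument is a reasonable independent route, and the local identification $\nabla^\bt|_U=q^*\nabla^h$ is correct. However, your globalization step has a genuine gap: the quotient $N$ exists only over the small neighbourhood $U$, so the assertion that ``every loop in $M$ based at $x$ projects to a loop in $N$'' is false for loops that leave $U$. Your lifting argument establishes only $\hol_{q(x)}(N,h)\subseteq\hol_x(\nabla^\bt)$; the reverse inclusion is not justified, and for a small enough slice the local base $(N,h)$ will typically have strictly smaller holonomy than $\nabla^\bt$ does on all of $M$. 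You have therefore not exhibited $\hol_x(\nabla^\bt)$ as the holonomy of any specific Riemannian manifold.

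A clean repair bypasses the quotient and argues algebraically. Since $\bt=0$, equation~\eqref{BianchiTWnew} shows that $R^\bt|_{\Lambda^2\D}$ satisfies the first Bianchi identity, while Lemma~\ref{lemRAxi} gives $R^\bt(\xi,\cdot)=0$. By Ambrose--Singer, $\hol_x(\nabla^\bt)$ is then spanned by endomorphisms $(\tau^\bt_\gamma)^{-1}R^\bt_y(X,Y)\tau^\bt_\gamma$, each of which is the value of an element of $\R(\hol_x(\nabla^\bt))$ since parallel transport is isometric. Hence $\hol_x(\nabla^\bt)$ is a Berger algebra, and every Berger subalgebra of $\so(2m)$ is realised as the holonomy algebra of a Riemannian manifold by Berger's classification and the de~Rham decomposition. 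This algebraic argument also makes transparent why no completeness of $\xi$ is needed.
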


Since $\hol_x(\nabla^\bt)\subseteq \so(\D_x)$,
there exists an $\hol_x(\nabla^\bt)$-invariant orthogonal decomposition of the vector space $\D_x$,
\begin{equation}\label{deRD} \D_x=\D^0_x\oplus \D_x^1\oplus\cdots\oplus \D_x^r.\end{equation}
Moreover, since $\hol_x(\nabla^\bt)$ is a Riemannian holonomy algebra, it decomposes into the direct sum of ideals \begin{equation}\label{deRh}\hol_x(\nabla^\bt)=\h_1\oplus\cdots\oplus \h_r\end{equation}
such that $\h_i\subseteq \so(\D_x^i)$ is an irreducible Riemannian holonomy algebra for $i=1,\dots, r$ that acts trivially on $\D_x^j$ when $i\not=j$. In particular,   $\hol_x(\nabla^\bt)$ acts trivially on  $\D^0_x$.

\begin{theorem}\label{thCriter} Let $(M,\theta,g)$ be a simply connected K-contact sub-Riemannian manifold. Then the following conditions are equivalent: 
	\begin{enumerate}
		\item	
		The horizontal holonomy algebra  $\hol(\nabla)$ is a co-dimension one  ideal of  $\hol(\nabla^\bt)$.

				\item  The following conditions are satisfied:
				\begin{enumerate}[(i)]
				\item  there exists a $\nabla^\bt$-parallel  complex structure  $J$ on $\D$, i.e., $\hol(\nabla^\bt)\subseteq\u(m)$,
				\item the space $\D^0_x$ from decomposition \eqref{deRD} is trivial,
				\item each algebra $\h_i$ from  decomposition \eqref{deRh} is contained in $\u(\D^i_x)$ and it contains  $\u(1)=\Real J_x^i$, where $J_x^i={J_x}|_{\D^i_x}$, and
				\item
	we have
		\begin{equation}\label{conddtheta=}d\theta=b_1\rho^1+\cdots+ b_r\rho^r,\quad b_1,\dots, b_r\in \Real\backslash\{0\}.\end{equation}
		\end{enumerate}

	\end{enumerate}	
\end{theorem}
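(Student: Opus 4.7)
My approach is to apply Theorem~\ref{thHolnabl} (valid here since $\bt=0$ is trivially Codazzi) to rephrase condition~(1) as the non-vanishing of a distinguished parallel endomorphism, and then to combine this with standard representation-theoretic facts about irreducible Riemannian holonomy algebras. Specifically, (1) is equivalent to $C^\perp\neq 0$, where $C=\frac{1}{4m}R(\d\theta^{-1})\in\hol(\nabla^\bt)$ is the Wagner Nomizu operator (skew since $\bt=0$) and $C^\perp$ denotes its projection to $\hol(\nabla)^\perp$ inside $\hol(\nabla^\bt)$. The operator $C^\perp$ is $\nabla^\bt$-parallel, and, spanning a $1$-dimensional ideal of the reductive Lie algebra $\hol(\nabla^\bt)$, is central; hence $C^\perp\in\bigoplus_i Z(\h_i)$ with respect to~(\ref{deRh}). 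Simple connectivity globalizes all pointwise parallel data and identifies full and restricted holonomy.

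For the direction (1)$\Rightarrow$(2), the Schouten curvature decomposes block-diagonally as $R=\sum_iR^i$ with $R^i\in\Lambda^2(\D^i)^*\otimes\h_i$, so $C^\perp|_{\D^0}=0$. Schur's lemma applied to the irreducible action of $\h_i$ on $\D^i$ identifies each non-zero $C^\perp|_{\D^i}\in Z(\h_i)\cap\so(\D^i)$ with a real multiple of a complex structure $J_i$ preserved by $\h_i$, yielding $\h_i\subseteq\u(\D^i)$ and $\Real J_i\subseteq\h_i$ on such factors. The central structural step is to show that $\d\theta$ is $\hol(\nabla^\bt)$-invariant. Introducing the musical dual $\phi\in\so(\D)$ defined by $\d\theta(X,Y)=g(\phi X,Y)$, the identity $(R(\alpha)\cdot\d\theta)(X,Y)=g([R(\alpha),\phi]X,Y)$ shows that $\hol(\nabla^\bt)$-invariance of $\d\theta$ is equivalent to $\phi$ lying in the centralizer of $\hol(\nabla^\bt)$ in $\so(\D)$. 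A second-Bianchi calculation using the K-contact identity $R^\bt(\xi,\cdot)=0$ and the torsion structure of $\nabla^\bt$ establishes this commutation. Once $\d\theta$ is $\hol(\nabla^\bt)$-invariant, Schur on each irreducible $\D^i$ (necessarily of K\"ahler type to carry a non-zero $\h_i$-invariant $2$-form) yields $\d\theta|_{\D^i}=b_i\rho^i$ with $b_i\neq 0$ by non-degeneracy, while $\d\theta|_{\D^0}=0$ forces~(ii) by non-degeneracy. Setting $J=\sum J_i$ gives~(i) and the decomposition $\d\theta=\sum_{i\geq 1}b_i\rho^i$ gives~(iv), while~(iii) has been established above.

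For the converse (2)$\Rightarrow$(1), assume~(i)--(iv) and set $\phi=\sum b_iJ_i$, which lies in $\hol(\nabla^\bt)$ by~(iii) and recovers $\d\theta=g(\phi\cdot,\cdot)$. Computing $C=\frac{1}{4m}R(\d\theta^{-1})$ block-wise yields summands $R^i((\rho^i)^{-1})\in\h_i$ whose projection to $Z(\h_i)$ is a non-trivial scalar multiple of $J_i$: this is a K\"ahler--Ricci-type identity, with non-vanishing coming from $\R(\h_i)\neq 0$, as $\h_i$ is a genuine Riemannian holonomy algebra. Since all $b_i\neq 0$, the resulting $C^\perp\neq 0$, and Theorem~\ref{thHolnabl} gives~(1).

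The principal technical obstacle is the second-Bianchi argument that places $\phi$ into the centralizer of $\hol(\nabla^\bt)$ in the K-contact setting: it is exactly this step that promotes Schur-type pointwise conclusions into the global block-diagonal form~(iv) of $\d\theta$. A closely related ingredient, pervading both directions, is the K\"ahler--Ricci-type identity showing that the central projection of $R^i((\rho^i)^{-1})\in\h_i$ is a non-zero multiple of $J_i$, which one verifies by combining $\R(\h_i)\neq 0$ with the explicit form of $\h_i$-invariant $2$-forms on $\D^i$.
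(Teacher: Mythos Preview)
The paper does not prove Theorem~\ref{thCriter}; it is quoted from \cite{GalHolK}, the only addition here being that completeness of $\xi$ can be dropped via Section~\ref{secCodazziCase}. So there is no argument in the paper to compare yours against, and I can only comment on the internal soundness of your sketch.

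The framework---reducing (1) to $C^\perp\neq 0$ via Theorem~\ref{thHolnabl} and exploiting that $C^\perp$ is $\nabla^\bt$-parallel and central in $\hol(\nabla^\bt)$---is sound, but there are two real gaps.

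For (1)$\Rightarrow$(2) the pivotal step is your assertion that a ``second-Bianchi calculation'' forces the $g$-dual $\phi$ of $\d\theta$ into the centraliser of $\hol(\nabla^\bt)$. You do not carry this out, and I do not see how it can work: for a general K-contact structure there is no reason that $\nabla^\bt\d\theta=0$ or that $R^\bt(X,Y)\cdot\d\theta=0$. The second Bianchi identity together with $R^\bt(\xi,\cdot)=0$ yields only $\nabla^\bt_\xi R^\bt=0$, which says nothing about how the curvature interacts with $\d\theta$. Without this centraliser statement you cannot rule out $\D^0\neq 0$, you cannot force \emph{every} $\h_i$ to be of K\"ahler type (your Schur argument only works on those factors with $C^\perp_i\neq 0$), and you cannot produce constant coefficients $b_i$ in~(iv).

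For (2)$\Rightarrow$(1) you compute the projection of $C$ to the centre $\bigoplus_i\Real J_i$ of $\hol(\nabla^\bt)$ and conclude $C^\perp\neq 0$. But $C^\perp$ is by definition the component of $C$ orthogonal to $\hol(\nabla)$, which is an at-most one-dimensional subspace of the centre, not the whole centre; a non-zero central projection of $C$ does not by itself prevent $C\in\hol(\nabla)$. What is actually needed is an Ambrose--Singer argument showing that $\hol(\nabla)=\widetilde{\hol}(\nabla^\bt)$, which by Theorem~\ref{theorHoltild} is generated by $R^\bt$ evaluated on $\ker(\d\theta)$, misses at least one direction in $\bigoplus_i\Real J_i$ under hypothesis~(iv).
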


\begin{theorem}\label{thholnablaKcontact} 
	Suppose that the horizontal holonomy algebra  $\hol(\nabla)$ is a co-dimension one  ideal of  $\hol(\nabla^\bt)$. Let $J$ be a fixed transverse K\"ahler structure. Then there are decompositions \begin{align}\label{deRD1}  \D_x&=\D_x^1\oplus\cdots\oplus \D_x^r,\\
	\label{deRh1}\hol_x(\nabla^{0})&=\h_1\oplus\cdots\oplus \h_r,\\
	\label{deRh2}\hol_x(\nabla)&=\h'_1\oplus\cdots\oplus\h'_r\oplus\mathfrak{t}_x^\bot,\end{align} where, for each $i\in\{1,\dots,r\}$, 
	$$\h_i=\h'_i\oplus \Real J_x^i \subseteq\u(\D^i_x),\quad J_x^i={J_x}|_{\D^i_x}$$ is  an irreducible Riemannian holonomy algebra, and $\mathfrak{t}^\bot_x$ is the orthogonal complement to a one-dimensional subalgebra
	$$\t_x=\Real (a_1 J_x^1+\cdots +a_r J_x^r), \quad a_1,\dots,a_r\in\Real\backslash\{0\},$$
	in the commutative Lie algebra $\mathrm{span}\{J_x^1,\dots,J_x^r\}$.
\end{theorem}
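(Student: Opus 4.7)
The argument will combine Theorem~\ref{thCriter}, Berger's classification of Riemannian holonomy algebras in $\u(m)$ containing the centre $\Real J$, and the Wagner-connection identity $\hol_x(\nabla^\bt)=\hol_x(\nabla)\oplus\Real C^\bot$ from Section~\ref{secCodazziCase}.

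By the codimension-one hypothesis, Theorem~\ref{thCriter} immediately provides the first two decompositions, together with a $\nabla^\bt$-parallel complex structure $J$, the fact that each $\h_i\subseteq\u(\D_x^i)$ is an irreducible Riemannian holonomy containing $\Real J_x^i$, and the expansion $\d\theta=\sum_i b_i\rho^i$ with $b_i\neq 0$. By Berger's list every such $\h_i$ is of the form $\h_i=\h'_i\oplus\Real J_x^i$ with $\h'_i=[\h_i,\h_i]$ semisimple, so the centre $\mathfrak{z}:=\bigoplus_i\Real J_x^i$ and the derived algebra $\bigoplus_i\h'_i$ of $\hol_x(\nabla^\bt)$ are pinned down. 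Since any codimension-one ideal must contain the derived algebra and the quotient $\mathfrak{z}$ is abelian, one has $\hol_x(\nabla)=\bigoplus_i\h'_i\oplus\t_x^\bot$ for some hyperplane $\t_x^\bot\subseteq\mathfrak{z}$, with $\t_x$ its orthogonal complement.

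To identify $\t_x$ explicitly, observe that pairwise symmetry of the Schouten curvature $R$ forces $R$ to vanish on cross-terms $\D^i\otimes\D^j$, so $R((\d\theta)^{-1})=\sum_i b_i^{-1}R((\rho^i)^{-1})$ with $R((\rho^i)^{-1})\in\h_i$. Under the identification $\so(\D_x^i)\cong\Lambda^2\D_x^i$ the bi-vector $(\rho^i)^{-1}$ corresponds to the central element $J_x^i$ and is the unique (up to scalars) $\h_i$-invariant element of $\Lambda^2\D_x^i$; Schur's lemma applied to the $\h_i$-equivariant map $R|_{\Lambda^2\D_x^i}$ therefore forces
\[
R((\rho^i)^{-1})=s_i\,J_x^i,\qquad s_i\in\Real,
\]
and hence $C:=\tfrac{1}{4m}R((\d\theta)^{-1})=\tfrac{1}{4m}\sum_i(s_i/b_i)J_x^i\in\mathfrak{z}$.

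The main obstacle is to rule out $s_i=0$. Suppose for contradiction that some $s_{i_0}=0$. A short Ambrose--Singer argument using the bi-vectors $\alpha_{ij}=(\rho^i)^{-1}-\tfrac{b_im_i}{b_jm_j}(\rho^j)^{-1}\in\ker\d\theta$ and Theorem~\ref{ThAS1} shows that $\hol_x(\nabla)\cap\mathfrak{z}\supseteq\mathrm{span}\{s_jJ_x^j:j\neq i_0\}$; if more than one $s_i$ vanished this intersection would have codimension $\geq 2$ in $\mathfrak{z}$, contradicting the codimension-one assumption on $\hol_x(\nabla)$. Thus exactly one $s_i$ vanishes and $\hol_x(\nabla)\cap\mathfrak{z}=\mathrm{span}\{J_x^j:j\neq i_0\}$, but then $C$, whose $J_x^{i_0}$-component is $s_{i_0}/b_{i_0}=0$, lies in $\hol_x(\nabla)$, which forces $C^\bot=0$ and contradicts the identity $\hol_x(\nabla^\bt)=\hol_x(\nabla)\oplus\Real C^\bot$. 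Hence all $s_i$ are non-zero, and the same Ambrose--Singer relations now identify $\t_x$ as the line $\Real\bigl(\sum_i(b_i/s_i)J_x^i\bigr)$, giving $a_i=b_i/s_i\in\Real\setminus\{0\}$ and completing the proof.
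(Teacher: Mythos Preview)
The paper does not prove this theorem itself; it is quoted from \cite{GalHolK} (see the opening of Section~\ref{secbt0}), so there is no in-paper argument to compare against. Your strategy---use Theorem~\ref{thCriter} for the de Rham data, note that each $\h_i$ splits as $\h'_i\oplus\Real J_x^i$, force $\hol(\nabla)\supseteq\bigoplus_i\h'_i$, and then pin down the central part via the Wagner endomorphism $C$ and the lemma in Section~\ref{secCodazziCase}---is sound and gives what is needed.

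There is, however, a genuine error in one step that you should fix. You claim that $R|_{\Lambda^2\D_x^i}$ is $\h_i$-equivariant and invoke Schur to conclude $R((\rho^i)^{-1})=s_i J_x^i$. The curvature at a point is \emph{not} holonomy-equivariant unless the space is locally symmetric, so this is unjustified; in general $R((\rho^i)^{-1})$ has a non-trivial $\h'_i$-component. Fortunately your argument never needs the full statement: define $s_i$ to be the $J_x^i$-coefficient of $R((\rho^i)^{-1})\in\h_i$. The $\h'_i$-part of $C$ already lies in $\hol_x(\nabla)$, so only the $\mathfrak{z}$-component $\tfrac{1}{4m}\sum_i(s_i/b_i)J_x^i$ matters for computing $C^\bot$, and every subsequent line goes through verbatim with this weaker definition of $s_i$.

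A second, minor point: the case split ``more than one $s_i$ vanishes'' is unnecessary and the contradiction you state there is not quite what you derive. If any $s_{i_0}=0$, then already $R(\alpha_{i_0 j})$ has $\mathfrak{z}$-part $-\tfrac{b_{i_0}m_{i_0}}{b_jm_j}s_jJ_x^j\in\t_x^\bot$, so $s_jJ_x^j\in\t_x^\bot$ for every $j\neq i_0$; hence the $\mathfrak{z}$-component of $C$ lies in $\t_x^\bot$, giving $C\in\hol_x(\nabla)$ and $C^\bot=0$, which contradicts the codimension-one hypothesis via the lemma in Section~\ref{secCodazziCase}. No separate treatment of ``two or more vanish'' is required.
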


\section{Pseudo-Hermitian structures}\label{secPsHerm} %%%%%%%%%%%%%%%%%% by FL

A prominent class of contact sub-Riemannian manifolds is provided by pseudo-Hermitian structures, which
emerge naturally in  the study of CR geometry. We explain this background briefly and 
relate common notions to our setting of extended Schouten connections. For detailed explanations on CR and 
pseudo-Hermitian geometry we refer to \cite{Tanaka,DT2006}.

Let $M$ be a smooth manifold of odd dimension $n=2m+1\geq 3$. A CR structure on $M$ can be defined as a pair $(\D,J)$ consisting 
of a contact distribution $\D$ and a complex structure $J:\D\to \D$, subject to the condition $$[X,JY]+[JX,Y]\in \Gamma(\D)$$ for any 
sections $X,Y$ of $\D$, and the vanishing of the Nijenhuis tensor
$$\mathcal{N}(X,Y)=[X,Y]-[JX,JY]+J([X,JY]+[JX,Y])=0$$ on $\D$. 
The first condition implies 
that any contact form $\theta$ for $\D$ gives rise
to a metric \begin{equation}\label{metrgCR} g=\d\theta(\cdot,J\cdot)\end{equation} on $\D$. For example, the smooth boundary
$M=\partial U$ of any strictly pseudoconvex domain $U$ in complex space $\mathbb{C}^{m+1}$ 
inherits naturally  a CR structure, and any defining function for the boundary $M=\partial U$ gives 
rise to a contact form $\theta$ such that $g$ given by \eqref{metrgCR} is a metric of Riemannian signature on $\D$. 

In the following, we assume $M$ to be equipped with a  strictly pseudoconvex CR structure $(\D,J)$ and some adapted contact form $\theta$
such that $g$ is positive definite. In this case  we call $(M,\D,J,\theta)$ an (abstract) CR manifold with pseudo-Hermitian structure, 
or simply a  pseudo-Hermitian manifold. 
It was discovered by Tanaka \cite{Tanaka}  and Webster \cite{Webster} that such a triple $(M,\D,J)$  admits for any choice of $\theta$ a
uniquely determined  linear connection. Independent of the construction procedures, this gives rise to  the so-called 
Tanaka--Webster connection $\nabla^{TW}$, which is known to be characterized by the following two conditions: 
$\nabla^{TW}$ satisfies the Koszul formula  (\ref{SchoutenCon})
for all sections $X,Y,Z$ of the contact distribution $\D$, and  for the Reeb vector 
$\xi$ of $\theta$ and any section $X$ in $\D$, we have
\[
\nabla_\xi^{TW}X=\frac{1}{2}\left( [\xi,X]-J[\xi,JX]\right)\ .
\]

These conditions define $\nabla^{TW}$ uniquely, no matter whether 
the Nijenhuis tensor $\mathcal{N}$ is assumed to vanish or not. 
In fact, we have 
\[g((\nabla^{TW}_XJ)(Y),Z)=-\frac{1}{2}\d\theta(X,\mathcal{N}(Y,Z) )\]
for $X,Y,Z\in\Gamma(\D)$ (cf. \cite{Tanno}), which shows that 
$J$ is $\nabla$-parallel if and only if $\mathcal{N}$ 
vanishes. 
In this case $\d\theta$ is $\nabla$-parallel as well.

By construction,  $\nabla^{TW}$ coincides with the Schouten connection for derivatives in $\D$-direction.
Hence,    $\nabla^{TW}$ is an extension, and  the endomorphism $N$ is given by 
$$NX=-\frac{1}{2}\left([\xi,X]+J[\xi,JX]\right),$$ 
which is usally called Webster torsion, denoted by $\bt$.

\begin{prop}
        The Tanaka--Webster connection $\nabla^{TW}$ coincides with the adapted connection~$\nabla^{\bt}$.
\end{prop}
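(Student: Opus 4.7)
The plan is to verify that $\nabla^{TW}$ is an extended Schouten connection and that its Nomizu operator coincides with $\bt$. For the horizontal part, the excerpt records that $\nabla^{TW}_XY$ is determined by the Koszul formula~(\ref{SchoutenCon}) for $X,Y\in\Gamma(\D)$, which is exactly how the Schouten connection $\nabla$ is defined; hence $\nabla^{TW}$ and $\nabla^{\bt}$ already agree on horizontal arguments. So $\nabla^{TW}=\nabla^{N}$ for the endomorphism field
\[
N(X):=\nabla^{TW}_\xi X-[\xi,X]=-\tfrac{1}{2}\bigl([\xi,X]+J[\xi,JX]\bigr),
\]
and the task reduces to showing $N=\bt$. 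A short manipulation using $J^{2}=-\id$ rewrites this as $N(X)=-\tfrac{1}{2}J(\mathcal L_\xi J)(X)$, where $(\mathcal L_\xi J)(X)=[\xi,JX]-J[\xi,X]$.

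The key intermediate step is the pointwise identity
\[
(\mathcal L_\xi g)(X,Y)=\d\theta\bigl(X,(\mathcal L_\xi J)Y\bigr),\qquad X,Y\in\Gamma(\D).
\]
I would derive this by applying $\mathcal L_\xi$ to the defining equation $g(X,Y)=\d\theta(X,JY)$ of the CR metric, expanding both sides with the Leibniz rule, and using $\mathcal L_\xi\d\theta=0$, which is a consequence of $\iota_\xi\d\theta=0$ and Cartan's formula $\mathcal L_\xi=\d\iota_\xi+\iota_\xi\d$. The two occurrences of $g([\xi,X],Y)=\d\theta([\xi,X],JY)$ cancel, leaving precisely the displayed identity.

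From here I would pass from $(0,2)$-tensors to endomorphisms via $\d\theta(X,Z)=-g(X,JZ)$ (which follows from $g=\d\theta(\cdot,J\cdot)$ together with $J^{2}=-\id$), obtaining $2\bt(X,Y)=-g(X,J(\mathcal L_\xi J)Y)$. The symmetry of $\mathcal L_\xi g$ then shows that the endomorphism $J(\mathcal L_\xi J)$ is $g$-self-adjoint, so it represents the symmetric bilinear form $-2\bt$, i.e.~$\bt(X)=-\tfrac12 J(\mathcal L_\xi J)(X)=N(X)$. Combined with agreement on horizontal arguments, this yields $\nabla^{TW}=\nabla^{\bt}$.

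The calculation is essentially an algebraic identity, so I expect the main obstacle to be bookkeeping of signs among the conventions for $J$, $g$, $\d\theta$ and $\mathcal L_\xi$; conceptually the whole argument hinges on the formula $(\mathcal L_\xi g)(X,Y)=\d\theta(X,(\mathcal L_\xi J)Y)$, which compactly encodes why the two candidate Nomizu operators must coincide in the CR integrable setting.
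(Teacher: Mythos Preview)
Your proof is correct and follows essentially the same route as the paper: both arguments reduce to showing that the Nomizu operator $N(X)=-\tfrac12([\xi,X]+J[\xi,JX])$ of $\nabla^{TW}$ satisfies $2g(NX,Y)=(\mathcal L_\xi g)(X,Y)$, and both obtain this by expanding $\mathcal L_\xi$ applied to $g=\d\theta(\cdot,J\cdot)$ and using $\mathcal L_\xi\d\theta=0$. The only cosmetic difference is that you package the intermediate step as the identity $(\mathcal L_\xi g)(X,Y)=\d\theta(X,(\mathcal L_\xi J)Y)$ via the tensor $\mathcal L_\xi J$, whereas the paper carries out the same cancellation line by line without naming that object.
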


\begin{proof}We compute 
\begin{eqnarray*}
(\mathcal{L}_\xi g)(X,Y) &=& \xi g(Y,X)-g([\xi,Y],X)-g(Y,[\xi,X])\\
&=& \xi \d\theta(Y,JX)-d\theta([\xi,Y],JX)-d\theta(Y,J[\xi,X])\\
&=& (\mathcal{L}_\xi \d\theta)(Y,JX)+d\theta(Y,[\xi,JX])-d\theta(Y, J[\xi,X])\\
&=& g(JY,[\xi,JX])-g(Y,[\xi,X])\\
&=& 2g(\bt X,Y)
\end{eqnarray*}
for all $X,Y\in \D$. Hence, the connection $\nabla^{TW}$ and $N=\bt$ satisfy (\ref{defA}). \end{proof}

The Ricci tensor of the curvature tensor $R^\bt$ of the Tanaka--Webster connection $\nabla^\bt$ satisfies, for $X,Y\in \D$, the identity 
\begin{equation}\label{RicTW}\Ric^{\bt}(X,Y)=\rho(X,JY)+(m-1)\bt(X,JY),\end{equation}
where $\rho$ is the pseudo-Hermitian Ricci-form given by
\begin{equation}\label{psHRicci}
\rho(X,Y)\ =\ \frac{1}{2}\tr_gg(R^\bt(J\cdot, \cdot)X,Y)\ .
\end{equation}
Note that $$\rho(X,Y)=\frac{1}{2}\tr_gg(R^\bt(X,Y,J\cdot,\cdot),$$ by the first Bianchi identity.
The trace of $\Ric^{\bt}$ on $\D$ is the Webster scalar curvature $scal^{\bt}$. 

In case that $$\rho=h \d\theta$$ for some  function $h$ on $M$, 
the pseudo-Hermitian form $\theta$ is called pseudo-Einstein structure on $M$. It follows $$h=\frac{1}{2m}scal^{\bt},$$ which 
needs not to be constant. 
In fact, we have $$R^{\bt}(\xi,\cdot,Y,X)=(\nabla^{\bt}_Y\bt)(X)- (\nabla^{\bt}_X\bt)(Y)$$ for $X,Y\in \D$, and $\Ric^{\bt}(\xi)=\tr_g\nabla^{\bt}\bt$.
The second Bianchi identity implies, for pseudo-Einstein spaces, $$\Ric^{\bt}(\xi,JX)=\frac{1}{2m}X(scal^{\bt})$$ for $X\in \D$.

In \cite{Leitner18} it was pointed out that a vector field $Z$ tangent to $\D$, which is $\nabla^{\bt}$-parallel in all $\D$-directions, needs not to be
covariantly constant in $\xi$-direction. In fact, for such $Z$, we have $$\nabla^{\bt}_\xi Z=-\frac{1}{m}\rho(Z).$$ Hence, if the Tanaka--Webster
connection is adjusted in $\xi$-direction by $$\hat{\nabla}_\xi Z:=\nabla^{TW}_\xi Z+\frac{1}{m}\rho(Z),$$ then $Z$ is a parallel vector field 
(in all directions) on $M$. Obviously, this adjustment $\hat{\nabla}$ is 
the extension of the Schouten connection with $$N=\bt+\frac{1}{m}\rho.$$ The connection  $\hat{\nabla}$ was called  basic in~\cite{Leitner18}.

    \begin{prop}
        The basic connection $\hat{\nabla}$ coincides with the Wagner connection $\nabla^W$.
    \end{prop}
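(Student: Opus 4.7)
The plan is the following. Both $\hat{\nabla}$ and $\nabla^W$ are extended connections in the sense of Section~\ref{secHorCon} and coincide with the Schouten connection on horizontal vector fields, so each is determined by its Nomizu endomorphism $N\in\Gamma(\mathrm{End}(\D))$. By the definition of the basic connection recalled just above, $N^{\hat{\nabla}} = \bt + \tfrac{1}{m}\rho$, where $\rho$ is viewed as an endomorphism of $\D$ via $g$. So the task is to show that
\[
N^W = \bt + \tfrac{1}{m}\rho.
\]

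By \eqref{NWagner}, $N^W = \tfrac{1}{4m}R((\d\theta)^{-1})$. The first step is to rewrite this contraction in terms of the curvature of the adapted (Tanaka--Webster) connection: using \eqref{RNXY} with $N=\bt$ and $\d\theta((\d\theta)^{-1}) = -4m$ one gets
\[
R((\d\theta)^{-1}) = R^{\bt}((\d\theta)^{-1}) + 4m\,\bt,
\]
so the identity $N^W = \bt + \tfrac{1}{m}\rho$ reduces to the single pointwise equality
\[
R^{\bt}((\d\theta)^{-1}) = 4\rho \qquad \text{as endomorphisms of } \D.
\]

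To establish that equality, the natural move is to work in a $J$-adapted orthonormal frame $(e_1,\dots,e_{2m})$ of $\D_p$ with $Je_{2i-1}=e_{2i}$. In such a frame $\d\theta = g(J\cdot,\cdot)$ takes the standard form $\sum_i e^{2i-1}\wedge e^{2i}$, and computing the inverse bi-vector with the paper's normalisations $\omega(X\wedge Y)=2\omega(X,Y)$ and $\omega((\omega)^{-1})=-4m$ gives $(\d\theta)^{-1} = -2\sum_i e_{2i-1}\wedge e_{2i}$, hence
\[
R^{\bt}((\d\theta)^{-1}) = -4\sum_{i=1}^{m} R^{\bt}(e_{2i-1},e_{2i}).
\]
Writing out the trace in \eqref{psHRicci} in the same frame and passing from the 2-form $\rho$ to its endomorphism via $g$ yields $\rho = -\sum_i R^{\bt}(e_{2i-1},e_{2i})$, so the two sides differ exactly by a factor of $4$.

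The main obstacle is not conceptual but purely bookkeeping: one has to carefully track the normalisation of the bi-vector $(\d\theta)^{-1}$, the convention $\omega(X\wedge Y) = 2\omega(X,Y)$, and the sign conventions identifying the 2-form $\rho$ with its endomorphism, so that the factor $\tfrac{1}{m}$ in $N^{\hat{\nabla}}$ matches the $\tfrac{1}{4m}$ in the Wagner formula. Once these are aligned, no new structural input is required beyond the relation between $R$ and $R^\bt$ given by \eqref{RNXY}, the defining property \eqref{NWagner} of $\nabla^W$, and the pseudo-Hermitian Ricci form \eqref{psHRicci}.
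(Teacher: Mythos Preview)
Your proof is correct and follows essentially the same route as the paper. The paper phrases the key contraction identity more compactly by observing that $(\d\theta)^{-1}=-2J$ as a bivector (where $J=\sum_i e_i\wedge Je_i$), so that $\tfrac{1}{4m}R^{\bt}((\d\theta)^{-1})=-\tfrac{1}{2m}R^{\bt}(J)=\tfrac{1}{m}\rho$, but this is exactly the computation you carry out explicitly in a $J$-adapted orthonormal frame.
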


\begin{proof}  We compute the endomorphism $N^W$, which determines the Wagner connection.
Note that $\d\theta(\d\theta^{-1})=-4m$. And, for the bivector $J=e_1\wedge Je_1+\cdots+e_m\wedge Je_m$, 
where $(e_1,\ldots,e_m)$ is some unitary basis of $(\D,g,J)$, we have $\d\theta(J)=2m$. Then, with (\ref{preWagner}), (\ref{NWagner}) and (\ref{psHRicci})
we obtain
\begin{eqnarray*}
N^W &=&\tfrac{1}{4m}R(\d\theta^{-1})\ =\ \bt+\tfrac{1}{4m}R^{TW}(\d\theta^{-1})\\ 
&=& \bt -\tfrac{1}{2m}R^{TW}(J)\ =\ \bt+\tfrac{1}{m}\rho\ ,
\end{eqnarray*}
the same expression as for the basic connection. \end{proof}

\section{Sub-Riemannian symmetric spaces}\label{secpsHermHolClass}
In this section we will review  results from 
\cite{BFG,FG,FGR} about sub-Riemannian symmetric spaces and their holonomy. We will provide more details about their classification in  Section~\ref{subsymholclass-sec}.
\subsection{Sub-symmetric spaces and quadruples}
\label{subsym-sec1}
An {\em isometry} between  sub-Riemannian contact manifold $(M,\theta,g)$ and $(\hat M,\hat \theta,\hat g)$  is a diffeomorphism $\phi:M\to \hat M$ such that 
\[\phi^*\hat \theta= \theta,\quad \phi^*\hat g=g.\]
This condition implies for the Reeb-vector field and for the sub-torsion that  
\[\phi_*\xi=\hat\xi, \qquad  \phi^*\hat \bt=\bt.\]
The isometries of $(M,\theta,g)$ are denoted by $\Iso (M,\theta,g)$, and if this group acts transitively on $M$ we call $(M,\theta,g)$ {\em homogeneous}. Given a point $o\in M$, a {\em sub-symmetry (at $o$)} is an isometry $\psi$ such that 
\[\psi(o)=o,\qquad\d\psi|_{\D_o}=-\mathrm{Id}.\]
A {\em sub-Riemannian symmetric space} is a homogeneous  sub-Riemannian contact manifold such that   every point in $M$ admits a  {\em sub-symmetry}.  A {\em locally sub-Riemannian symmetric space} is a  sub-Riemannian contact manifold $(M,\theta,g)$ such that every point admits a {\em local sub-symmetry} $\psi:U\to \psi(U)$ (homogeneity is not required). 

Now let $(M,\theta,g)$ be a sub-Riemannian contact manifold with sub-torsion $\bt$. Let $\nabla^\bt$ be the adapted affine connection, i.e.,~the unique affine connection that satisfies $\nabla^\bt g=0$,  $\nabla^\bt\xi=0$ and with torsion 
\[T(X,Y)= \d\theta(X,Y)\xi,\quad T(\xi,X)=\bt(X).\]
Let $R^\bt$ be the curvature of $\nabla^\bt$.
In analogy to the Riemannian situation in \cite[Theorem~2.1]{FG} it is shown that $(M,\theta,g)$ is locally sub-symmetric if and only if
\[\nabla^\bt_X T=0,\quad \nabla^\bt_X R^\bt=0,\quad\text{ for all $X\in \D$.}\]

The other important result in  \cite{FG}, which is also in analogy with the Riemannian case, is the purely algebraic description of sub-Riemannian symmetric spaces. This requires the following setting, which in 
\cite[Section 3]{FG} is called an {\em sub-orthogonal involutive Lie algebra} (or  {\em sub-OIL}, for short). We prefer the name sub-Riemannian symmetric quadrupel, in analogy to  symmetric triples of Riemannian symmetric spaces.
A {\em sub-symmetric quadrupel} $(\g,s, \k, B)$ consists of 
\begin{enumerate}
\item a real Lie algebra $\g$ and  an involution $s$ of $\g$, 
\item so that $\g=\h\+\p$, where $\h=\ker(s-\mathrm{Id})$ and $\p=\ker(s+\mathrm{Id})$, is the decomposition into the eigenspaces of $s$,
\item a subalgebra $\k\subseteq \h$   of co-dimension one that does not contain a non trivial ideal of $\g$,  
\item and  an $\mathrm{ad}_\k$-invariant inner product $B$  on $\p$,
\item such that bilinear form $\Theta:\p\times\p \ni (X,Y)\mapsto [X,Y]\mod \k \in \h/\k$ is nontrivial. 
\end{enumerate}
A sub-symmetric quadruple, for which $B$ is not only invariant under $\k$ but also under $\h$, is called {\em sub-torsion-free}. 
 For a sub-symmetric quadruple we have that $\h$ is a subalgebra and that
 \[ [\h,\p]\subseteq \p,\quad [\p,\p]\subseteq \h, \quad 
  \h=\k+[\p,\p].\]
 Since $\k\subseteq\so(\p,B)$ and $B$ positive definite, $\k$ is a compact Lie algebra, so that there is a $\k$-invariant complement $\mathfrak{m}$ to $\k$, i.e.,~$\g=\k\+\mathfrak{m}$. Moreover, this implies that 
that $\h=\k\+\langle Z\rangle $, where $Z$ centralizes $\k$. As a consequence $\Theta$ is $\ad_{\h}$-invariant.

For two sub-symmetric quadruples $Q=(\g,s, \k, B)$ and $\hat Q=(\hat\g,\hat s, \hat \k, \hat B)$ we say that 
%{\em $Q$ extends to $\hat Q$} (or 
that {\em $ Q$ restricts to $\hat Q$}
 if $\hat \g$ and $\hat \k$  are subalgebras of $\g$ and $ \k$, $\hat s$ is the restriction of $ s$, and $ \p=\hat \p$ and $B=\hat B$. 
 We call a sub-symmetric quadruple a {\em sub-symmetric transvection quadruple} if $[\p,\p]=\h$.
 Given the sub-symmetric quadruple $(\g,s, \k, B)$ one can can always restrict it to a transvection quadruple by setting  $\hat\h=[\p,\p]$ and $\hat \k=\k\cap \hat \h$ and the restricted quadruple $(\hat\g=\hat \h \+ \p,\hat k, s,B)$.

To every simply connected sub-Riemannian symmetric space $(M,\theta,g)$ one can  {\em associate} a sub-symmetric quadruple in the following way, \cite[Proposition~3.1]{FG}. Let
$G=\Iso(M,\theta,g)$ and $K$ the stabilizer group in $G$ of a point $o\in M$, and let $\g$ and $\k$ their corresponding Lie algebras. Let $\psi\in K$ be the sub-symmetry at $o$ and $s=\mathrm{Ad}_\psi$ the involution of $\g$ such that $\g=\h\+\p$ is the decomposition into $\pm 1$-eigenspaces of $s$. Then the projection
\[\pi:G\ni \psi\longmapsto \psi(o)\in M,\]
yields an isomorphism $\d\pi|_o$ between $\p$ and $ \D|_o$, and between $\mathfrak{m}$ and $T_oM$, that  pulls back the sub-Riemannian metric $g|_o$ to a $\mathrm{Ad}_K$-invariant bilinear form $B$ on $\p$. The $G$-invariant Reeb vector field induces a $\xi^*\in \mathfrak{m}$ such that $\h=\k\+\langle\xi^*\rangle$. Also the sub-torsion $\bt$ induces an $\Ad_K$-invariant symmetric bilinear form on $\p$, which we denote by $\bt^*$. Clearly we have that $\ad_\k\subseteq \so(\p,B)$, but $\ad_\xi$ is skew with respect to $B$ only  if the sub-torsion vanishes,   as we have, \cite[Lemma 4.5]{FG}.
\begin{equation}
B(\ad_\xi(X),Y)+B(X\ad_\xi (Y))=-2 \bt^*(X,Y).
\end{equation}
Hence $\ad_\h\subseteq \langle \bt^*\rangle \+\so(\p,B)$ and $\bt^*$ as well as the skew part of $\ad_\xi$, 
which is equal to $A_\xi:=\ad_\xi+\tau^*$,   commutes with $\k$.

Conversely, to every sub-symmetric quadruple $(\g,s, \k, B)$ one can define  simply connect sub-Riemannian symmetric space $(M=G/K,\theta,g)$, where $G$ is the unique simply connected Lie group with Lie algebra $\g$ and $K$ the unique connected subgroup of $G$ that has Lie algebra $\k$, such that   $(\g,s, \k, B)$ extends to the sub-symmetric quadruple associated to $(M,\theta,g)$, see  \cite[Proposition~3.2]{FG} for details. Moreover, two sub-symmetric spaces are isometric if one of their associated sub-symmetric quadruple restrict to the other. Hence,   the quadruple $(\g,s, \k, B)$ one can consider $\hat\h=[\p,\p]$ and $\hat \k=\k\cap \hat \h$ and the restricted quadruple $(\hat\g=\hat \h \+ \p,\hat k, s,B)$, so that one can without loss of generality assume that $[\p,\p]=\h$. 

\subsection{Curvature and holonomy}
In \cite{FGR} the  holonomy is defined as the holonomy of the  adapted affine connection, with $\xi$ parallel, which is hence the same as the holonomy of the adapted extended connection $\Hol(\nabla^\bt)$, $\hol(\nabla^\bt)$,  etc. The horizontal holonomy is obtained when restricting to horizontal loops, and  is the holonomy of the Schouten connection $\Hol(\nabla)$, $\hol(\nabla)$, etc. Let $(\g,s,\k,B)$ be a sub-symmetric quadruple associated to a sub-symmetric space $(M=G/H,\theta,g)$ and such that $\h=[\p,\p]$. 
The curvature of the adapted connection then is given as,  \cite[Proposition~4.1]{FGR}, 
\begin{eqnarray*}
R^\bt(\xi,X')|_o&=&0,
\\
R^\bt (X',Y')Z'|_o&=&-\left(\left[ \left[ X,Y\right],Z\right]\right)^*|_o -\theta^*([X,Y])|_o(\bt(Z))^*|_o,
\end{eqnarray*}
where $X,Y,Z$ are left-invariant vector fields on $G$, the vectors $X',Y',Z'$ are their images under $\d\pi|_o$ for $\pi:G\to G/H$ the canonical projection, $(.)^*$ denotes the map from $\g$ to fundamental vector fields on $M$, and $\theta^*=\pi^*\theta$.
Hence, 
\[
R^\bt (X',Y')Z'|_o=-\theta^*([X,Y])|_o\left( (\ad_\xi (Z))^* + \bt(Z')\right)|_o\mod (\ad_\k)^*|_o.
\]
Using this  and the Ambrose--Singer holonomy theorem 
 in \cite[Theorem~5.1]{FGR} it is shown that when $m\ge 2$ it is 
\begin{equation}
\label{symhol}
\hol(\nabla)=\ad_\k,\quad \hol(\nabla^\bt)=\ad_\k\+ \langle A_\xi\rangle,\end{equation}
 where 
$A_\xi=\ad_\xi+\tau$ is the skew part of $\ad_\xi$. Note that for sub-symmetric spaces which are sub-torsion-less we have $\hol(\nabla^\bt)=\ad_\h$.   Using this, in \cite[Theorem~6.1]{FGR} a classification of sub-symmetric spaces with irreducible holonomy is given, extending the earlier classification in \cite[Theorem 5.1]{FG} of those where $\h$ acts irreducibly on $\p$.  We will not give this classification here, as it  will follow from our results in Section~\ref{subsymholclass-sec}, where we do not assume irreducibility.

\section{Locally sub-symmetric spaces and locally homogeneous structures}\label{secLocsubsym}
% \acom{May be call it Locally sub-symmetric spaces and locally homogeneous structures, and stress that loc. sub-sym. cond. is equivalent to the Wagner connection to be loc. hom. str.}

In this section we consider locally sub-symmetric spaces.
Let $(M,\theta, g)$ be a contact 
  sub-Riemannian space with sub-torsion $\bt$, Reeb vector field $\xi$, and adapted connection $\nbt$ on $\D$ with curvature $\Rbt$ which is a section of $\Lambda^2T^*M\otimes \so(\D)$. In \cite{FG} the affine connection $\nbbt$ is considered that extends the adapted connection to sections of $TM$ by setting $\nbbt\xi=0$. It satisfies $\nbbt g=0$ and its torsion tensor is given by
  \[T(X,Y)=\d\theta (X,Y)\xi,\qquad T(\xi ,X)=\bt(X),\]
  for $X,Y\in \D$.
 Its curvature tensor $\Rbbt$ satisfies $\Rbbt(X,Y)\xi=\Rbbt(\xi,X)\xi =0$ and otherwise is equal to $\Rbt$.
 Then, as in \cite{FG}, $(M,\theta, g)$ is {\em locally sub-symmetric}  if and only if
 \begin{equation}
 \label{FGlocsym}\nbbt_X T=0,\qquad \nbbt_X  \Rbbt =0,\quad\text{ for all }X\in \D.\end{equation}
Using the   Schouten partial connection $\nabla$  on all tensor bundles of $\D$,  these equations are equivalent to
\begin{equation} 
\label{locsympart}
\nabla \bt=0,\qquad \nabla \d\theta=0,\qquad \nabla  R =0,
\end{equation}
where $R$ is the curvature tensor of the Schouten partial connection. For any extended connection $\nabla^N$, the connection induced on sections of tensor bundles of $\D$, is given by $\nabla^N=\L_\xi+ N\cdot$, where the dot denotes the natural action of an endomorphism on tensors. 
Equations~(\ref{locsympart}) imply for any extended connection $\nabla^N$  that
\begin{equation} 
\label{locsymN}
\nabla^N_X \bt=0,\qquad \nabla^N_X \d\theta=0,\qquad \nabla^N_X R =0,\quad \text{for all }X\in\D,
\end{equation}
and in particular for the adapted connection that 
\begin{equation}
\label{locsymRbt}
\nbt_X R^\bt(Y,Z)=0,\quad \text{for all }X,Y,Z\in\D
\end{equation}
It also applies to the Wagner connection $\nabla^W$ with $\nabla^W_\xi X=\nabla^\bt_\xi X+C(X)$, where \[C=\tfrac{1}{4m} R^\bt(\d\theta^{-1})\in \Gamma(\so(\D)),\] and which satisfies~(\ref{Wagner2}). Since $\nbt_X\d\theta=0$, we also have that $\nbt_X\d\theta^{-1}=0$, and together with~(\ref{locsymRbt}) this yields  $\nbt_XC =0$, and consequently
\begin{equation}
\label{nabWC}
\nabla^W_XC=0,\quad \text{for all }X\in\D.
\end{equation}
Hence, using~(\ref{RNXY}), for the Wagner connection of a locally symmetric sub-Riemannian space we have 
\begin{equation} 
\label{locsymW}
\nabla^W_X \bt=0,\qquad  \nabla^W_X C=0,\qquad \nabla^W_X \d\theta=0,\qquad \nabla^W_X R^W =0,\quad \text{for all }X\in\D.
\end{equation} 
%\acom{I have replaced $W$ by $N^W$}
Setting $N^W:=\bt +C$, differentiating these equations and skewing yields
   for any 
 $X,Y\in \D$ 
 that 
 \[
 [ R^W(X,Y) ,N^W]=\d\theta(X,Y) \nabla^W_\xi N^W,\qquad R^W(X,Y)\cdot \d \theta =  \d\theta(X,Y) \nabla^W_\xi \d\theta \]
 and \[(R^W(X,Y)\cdot R^W) (U,V) = \d\theta(X,Y)(\nabla^W_\xi R^W)(U,V),\]
  where $U,V\in \D$ and the dot denotes the natural action of endomorphism on the corresponding tensors.
 When substituting $\d\theta^{-1}$ for $X,Y$ in these  equations, by the defining equation~(\ref{Wagner2}) for the Wagner connection,  the left-hand-side vanishes, whereas $\d\theta(\d\theta^{-1})=-4m$,  so that 
 \begin{equation}
 \label{allWparallel}
 \nabla^W_\xi N^W=0,\qquad \nabla^W_\xi \d\theta=0,\qquad \nabla^W_\xi R^W(X,Y)=0,\end{equation}
and hence
 \begin{equation}
 \label{RWW}
 [ R^W(X,Y) ,N^W]=0,\qquad R^W(X,Y)\cdot \d \theta = 0,\qquad 
 (R^W(X,Y)\cdot R^W)(U,V)= 0,
 \end{equation}
for all $X,Y,U,V\in \D$. 
 It also implies that
 \begin{equation}
 \label{Liexi}
 \L_\xi N^W=0,\qquad  \L_\xi \d\theta +N^W\cdot \d\theta=0,\qquad \L_\xi R^W+N^W\cdot R^W=0.\end{equation}
Using Cartan's formula, we get 
$\L_\xi\d\theta = \iota_\xi\d^2\theta +\d  \iota_\xi\d\theta=0$, so that
\begin{equation}
\label{Wdtheta}
N^W\cdot \d\theta =0.
\end{equation}
Moreover, we consider the affine connection $\nbW$ defined by the Wagner connection and the condition $\nbW\xi=0$. Its torsion is given by
\[T^W(X,Y)=\d\theta(X,Y)\xi,\qquad T^W(\xi,X)=N^W(X).\]
From the above, we have that the torsion tensor $T^W$ is parallel for $\bnab$. 
By Lemma~\ref{lemRAxi} and equation~(\ref{RNxiX}) for the adapted connection, we get that $\L_\xi\nabla=0$ and hence, from the same equation and~(\ref{locsymW}) we get that $R^W(\xi,X)=0$.   This implies that also the curvature of $\bnab$ is  $\bnab$-parallel. Moreover,
the second Bianchi identity for $\nbW$ then gives
\[(\nbW_\xi R^W)(X,Y)=R^W(N^WX,Y)+R^W(X,N^WY),\]
which vanishes by the third equation in~(\ref{allWparallel}), so that
\begin{equation}\label{RWR}
R^W(N^WX,Y)+R^W(X,N^WY)=0\qquad N^W\cdot R^W (X,Y) =0,
\end{equation}
where the second equation holds because of the first and the first in~(\ref{RWW}).
With the help of these equations one can show that a locally symmetric sub-Riemannian space defines a a sub-symmetric quadruple. The process follows the construction of a Lie triple system from a locally symmetric Riemannian space. 
\begin{theorem}
Any locally symmetric, contact sub-Riemannian space $(M,\theta, g)$ defines a sub-symmetric quadruple $(\g,s,\k,g_x)$ and hence a sub-Riemannian symmetric space $G/K$. 

Moreover, $\k$ is the horizontal holonomy algebra and $(M,\theta, g)$ is locally isometric to $G/K$.
\end{theorem}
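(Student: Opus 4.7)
The plan is to adapt the classical Nomizu construction for Riemannian Ambrose--Singer connections to the contact setting, using the affine Wagner connection $\bnab$ as the underlying homogeneous structure. First I would verify that $\bnab$ is a metric connection on the Riemannian manifold $(M, h)$ with $h := \theta \otimes \theta + g$: metricity of $\bnab$ with respect to $g$ on $\D$ is $\nabla^W g = 0$, while $\bnab \theta = 0$ is immediate from $\bnab \xi = 0$. The parallelism $\bnab T^W = 0$ and $\bnab R^{\bnab} = 0$ has essentially been established in the computations leading to equations~(\ref{allWparallel})--(\ref{RWR}), so $\bnab$ is a Riemannian Ambrose--Singer connection in the sense of~\cite{AmbroseSinger58,tricerri-vanhecke83}.

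Next I fix a point $x \in M$, set $\p := \D_x$ with inner product $B := g_x$, and let $\k := \hol_x(\nabla)$; by Theorem~\ref{thholW} and because $\bnab \xi = 0$, this coincides with $\hol_x(\bnab)$. I then form the Nomizu Lie algebra $\g := \k \oplus \p \oplus \langle \xi_x \rangle$ with brackets $[A, v] := A(v)$ for $A \in \k, v \in T_xM$, and $[u, v] := T^W_x(u, v) - R^{\bnab}_x(u, v)$ for $u, v \in T_xM$; the Jacobi identity follows from the first and second Bianchi identities for $\bnab$ together with the parallelism of $T^W$ and $R^{\bnab}$. Setting $\h := \k \oplus \langle \xi_x \rangle$ and defining $s := \id$ on $\h$, $s := -\id$ on $\p$, one verifies that $s$ is a Lie algebra involution: for $X, Y \in \p$ one has $[X, Y] = \d\theta(X, Y)\, \xi_x - R^W(X, Y) \in \h$; for $X \in \p$ one has $[\xi_x, X] = N^W(X) \in \p$ using the identity $R^W(\xi, X) = 0$ derived above; and $[A, \xi_x] = A(\xi_x) = 0$ for $A \in \k$ since $\k$ acts trivially on $\xi$. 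Nontriviality of the form $\Theta(X, Y) = \d\theta(X, Y)\, \xi_x \bmod \k$ is immediate from the nondegeneracy of $\d\theta$ on $\D$, effectivity of $\k \subseteq \h$ follows because $\k \hookrightarrow \so(\p)$ is faithful (any ideal $\mathfrak{i} \subseteq \k$ of $\g$ would satisfy $[\mathfrak{i}, \p] \subseteq \mathfrak{i} \cap \p = 0$), and $\ad_\k$-invariance of $B$ is built in. This yields the sub-symmetric quadruple $(\g, s, \k, g_x)$; by the correspondence of Section~\ref{subsym-sec1} it produces the simply connected sub-Riemannian symmetric space $G/K$, and by~(\ref{symhol}) its horizontal holonomy algebra is $\ad_\k \cong \k$.

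For the local isometry $M \simeq_{\mathrm{loc}} G/K$, I would invoke a Cartan--Ambrose--Hicks-type theorem for metric connections with parallel torsion and curvature, applied to $\bnab$ on the Riemannian extensions $(M, h)$ and $(G/K, h)$: both carry the same infinitesimal data $(T_xM, h_x, T^W_x, R^{\bnab}_x)$ at their respective base points by construction, hence are locally isometric as Riemannian manifolds. The isometry automatically preserves $\theta$ and $\xi$, since these are the $\bnab$-parallel 1-form and vector field determined by the orthogonal splitting $TM = \D \oplus \langle \xi \rangle$, which is intrinsic to $\bnab$. I expect the main technical obstacle to be the verification of the Jacobi identity for the Nomizu bracket in the mixed case involving $\xi_x$ and horizontal vectors, where the sub-torsion and the component $C$ of $N^W$ interact, and the careful invocation of Cartan--Ambrose--Hicks in a form that retains the full contact sub-Riemannian datum rather than only the Riemannian extension.
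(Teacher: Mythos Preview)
Your proposal is correct and follows essentially the same approach as the paper: both build the Nomizu Lie algebra from the affine Wagner connection $\bnab$ (using $\bnab T^W = 0$, $\bnab R^{\bnab} = 0$, and $R^W(\xi,\cdot)=0$) and then invoke the Ambrose--Singer/Tricerri--Vanhecke characterization of local homogeneity for the Riemannian extension $\theta^2+g$ to obtain the local isometry. The only cosmetic differences are that the paper defines $\k$ as $\mathrm{span}\{R^W(X,Y)\}$ and checks the Jacobi identity case by case via~(\ref{Wdtheta}), (\ref{RWW}), (\ref{RWR}), whereas you take $\k = \hol_x(\nabla)$ from the start (equivalent since $\nabla^W R^W = 0$) and appeal to the general Nomizu/Bianchi argument; your explicit verification of the effectivity and nontriviality axioms for the quadruple is a point the paper leaves implicit.
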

\begin{proof}
We fix a point $x\in M$  and define the vector spaces
\[ \p:=\D_x,\quad \k:=\mathrm{span}\{R^W(X,Y)\mid X,Y \in \D_x\},\quad \h:=\k\+\langle \xi_x\rangle,\]
so that $T_xM=\p\+\langle \xi_x\rangle$ and $\k\subseteq \so(\p)$.
We set  $\Theta (X,Y):=\d\theta_x(X,Y)$
% \acom{a priori there is no $J$- deleteed}
and
define a  bracket on $\g:=\p\+\h$ as follows, where we omit the point $x$ at the tensors and abuse notation by using the same bracket symbol as above for the commutator of vector fields and  endomorphisms. For $W=\bt+C$ and $R^W$ the curvature tensor of the Wagner connection, $X,Y\in \p$ and $A,B\in \k$, we define
\begin{equation}\label{Liebracket}
\begin{array}{rcl}
\left[X,Y\right]&=& -R^W(X,Y)+\Theta (X,Y)\xi,
%\quad \text{ for $X,Y\in \p$,}
\\
\left[\xi,X\right]&=&N^W(X)\ =\ -\left[X,\xi\right],
% \quad \text{ for $X\in \p$,}
\\
\left[\xi,A\right]&=&0\ =\ [A,\xi ], 
%\quad \text{ for $A\in \k$,}
\\
\left[A,X\right]&=& AX\ =\ - [X,A],
%\quad \text{ for $A\in \k$, $X\in \p$,}
\\
\left[A,B\right]&=& AB-BA.
\end{array}
\end{equation}
We need to check that this satisfies the Jacobi identity. 
Since   $[[X,Y],Z]=-R(X,Y)Z)$,  the Jacobi identity for $X,Y,Z\in \p$ is satisfied  because of~(\ref{RNXY}). If two of the arguments are from $\k$, the Jacobi identity holds  trivially. 
For the remaining cases we have for $X,Y\in \p$ that
\begin{eqnarray*}
[[X,Y],\xi]+[[Y,\xi],X]+[[\xi,X],Y]
&=&
[X,N^WY]+[Y,N^WX]
\\
&=&-R^W(N^WX,Y)-R^W(X,N^WY) +(N^W\cdot \d\theta)(X,Y)\xi
\\
&=&
0,
\end{eqnarray*}
because of~(\ref{Wdtheta}) and~(\ref{RWR}).
For 
$A\in \k$ it is
\[
[[X,Y],A]+[[Y,A],X]+[[A,X],Y]
=
A\cdot R^W(X,Y)-A\cdot \Theta (X,Y)
=
0,
\]
and 
\[
[[X,\xi],A]+[[\xi,A],X]+[[A,X],\xi]
=
[A,N^W]
=
0
\]
by~(\ref{RWW}).
Hence, $\g$ is a Lie algebra with an involution $s$ defined by $s|_\h=\id$ and $s|_\p=-\id$, a co-dimension one subalgebra $\k\subseteq \h$, with $g_x$ as a $\ad_\k$ invariant inner product, i.e.,~satisfying the conditions for a sub-symmetric quadruple $(\g,s,\k,g_x)$. 

Moreover, we have that $\nabla^WR^W=0$ so that $\k$ is the holonomy algebra of $\nabla^W$ and therefore, by~Theorem~\ref{thholW} equal to the horizontal holonomy, i.e.,~to the holonomy algebra of the Schouten partial connection.

By taking $G$ to be the unique simply connected group with Lie algebra $\g$ and $K$ to be the unique connected subgroup corresponding to the subalgebra $\k$, we obtain a sub-symmetric space $G/K$. 

By construction we have a linear isometry between $T_xM$ and $T_{[e]}(G/H)$, and we need to show, similarly to the Riemannian situation, that it extends to a local isometry.
Above in~(\ref{allWparallel})  we have in seen that  the affine connection $\nbW$ defined by the Wagner connection and the condition $\nbW\xi=0$ has parallel torsion and curvature. Moreover, the metric defined as
\[\overline{g}=\theta^2+ g\]
is parallel for $\bnab$. Hence, $\bnab$ is an Ambrose--Singer connection for the Riemannian manifold $(M,\overline{g})$, which, as a consequence, is a locally homogeneous Riemannian manifold, \cite{AmbroseSinger58}, see also 
\cite{tricerri-vanhecke83}. As such it is locally isometric to a Riemannian homogeneous space that is given by $G/K$, since its construction follows~(\ref{Liebracket}). 
Hence, the locally sub-symmetric space is locally isometric to the globally sub-Riemannian space $G/K$.
\end{proof}
In the proof we have used the following observation, which follows from~(\ref{locsymW}) and~(\ref{allWparallel}), and which is worth to be stated separately.
\begin{cor}
%\axcom{It seems that we may state that the Wagner connection is a homogeneous structure {\bf if and only if} the space is sub-symmetric?}
A contact sub-Riemannian space $(M,\theta, g)$ is locally symmetric if and only if 
the Wagner connection defines  an Ambrose--Singer connection (i.e., a homogeneous structure)  for the Riemannian metric $\theta^2+g$ on $M$ and with $\xi$ is parallel. 
\end{cor}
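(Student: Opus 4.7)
The plan is to establish both directions of the equivalence by unpacking what ``Ambrose--Singer for $(M,\overline{g}:=\theta^2+g)$'' means in terms of the Wagner connection data and matching it to the characterization of local sub-symmetry through the conditions $\nabla\bt=0$, $\nabla\d\theta=0$, $\nabla R=0$ in~(\ref{locsympart}) (which is equivalent to~(\ref{FGlocsym})).

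For the forward direction, assume $(M,\theta,g)$ is locally sub-symmetric. Almost everything has been assembled already in the preceding derivation. From~(\ref{locsymW}) we have $\nabla^W_X\bt = \nabla^W_X C = \nabla^W_X \d\theta = \nabla^W_X R^W = 0$ for $X\in\D$, while~(\ref{allWparallel}) provides $\nabla^W_\xi N^W = \nabla^W_\xi \d\theta = \nabla^W_\xi R^W = 0$. Extending to the affine connection $\bnab$ on $TM$ via $\bnab\xi=0$, these combine to give $\bnab\overline{g}=0$ (since $\bnab$ preserves $g$ on $\D$ and $\bnab\theta=0$ because $\bnab\xi=0$ preserves the splitting $TM=\D\oplus\langle\xi\rangle$), $\bnab T^W=0$ (since $T^W$ is determined by $\d\theta$ and $N^W$ and $\xi$), and $\bnab R^W=0$ (using $R^W(\xi,\cdot)=0$, which follows from Lemma~\ref{lemRAxi} and~(\ref{RNxiX}) in the locally symmetric case). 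This is precisely the Ambrose--Singer condition.

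For the converse, assume $\bnab$ is an Ambrose--Singer connection for $(M,\overline{g})$ with $\bnab\xi=0$. Then $\bnab\overline{g}=0$, $\bnab T^W=0$, $\bnab R^W=0$. From $\bnab\xi=0$ and the torsion formulas $T^W(X,Y)=\d\theta(X,Y)\xi$ and $T^W(\xi,X)=N^W(X)$, parallelism of $T^W$ yields $\bnab\d\theta=0$ and $\bnab N^W=0$. Decomposing $N^W=\bt+C$ into its $g$-symmetric and skew parts and using $\bnab g=0$, we obtain $\bnab \bt=0$ and $\bnab C=0$. Now restricting to $X\in\D$, the Wagner connection agrees with the Schouten connection on tensor fields of $\D$, so we extract $\nabla_X\bt=0$ and $\nabla_X\d\theta=0$. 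For the curvature, formula~(\ref{RNXY}) gives $R^W(Y,Z)=R(Y,Z)+\d\theta(Y,Z)N^W$ on $\Lambda^2\D$, so $\bnab R^W=0$ together with $\bnab\d\theta=0$ and $\bnab N^W=0$ implies $\nabla_X R=0$ for $X\in\D$. Thus~(\ref{locsympart}) is satisfied and $(M,\theta,g)$ is locally sub-symmetric.

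The proof is essentially bookkeeping: the content lies in recognizing that the Wagner connection, precisely because its curvature kills $(\d\theta)^{-1}$, encodes the locally symmetric conditions in a single pair of parallelism statements $\bnab T^W=0$ and $\bnab R^W=0$ on the full tangent bundle. The only mild subtlety, which is already handled in the preceding theorem, is the passage between the horizontal conditions~(\ref{locsympart}) and the full-space conditions~(\ref{allWparallel}); this uses the Wagner normalization~(\ref{Wagner2}) to convert horizontal parallelism into $\xi$-direction parallelism. No new computations beyond those already displayed in the preceding text are needed.
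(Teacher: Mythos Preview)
Your proposal is correct and follows the paper's approach. The paper's own proof is a one-line remark that the corollary ``follows from~(\ref{locsymW}) and~(\ref{allWparallel})'', which only addresses the forward direction explicitly; you have additionally supplied the converse by unwinding $\bnab T^W=0$ and $\bnab R^W=0$ back to~(\ref{locsympart}), which the paper leaves to the reader.
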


%\newpage

\section{Classification of pseudo-Hermitian holonomy} 
\label{SecClass} 
%\acom{in what follows I replaced $\nabla^{TW}$ by $\nabla^\bt$}
In this section we will provide a classification of the holonomy algebras of the Schouten and the Tanaka--Webster connections of a pseudo-Hermitian manifold. The K-contact case, i.e.,~when $\bt=0$, is fully described in  Section \ref{secbt0} above, so in Theorem~\ref{ThClass} below we will focus on the case when $\bt\not=0$.

Let $(M,J,\theta)$ be a pseudo-Hermitian manifold of dimension $2m+1\geq 7$. 
The sub-Riemannian metric is $g(X,Y)=\d\theta(X,JY)$.
 %or $g(JX,Y)=\d\theta(X,Y)$.  
 We denote by $\u(\D)$ the 
vector bundle with fibres being the centralizer $\u(\D_x)\cong\u(m)$ of $J_x$ in  $\so(\D_x)$.
Let $\nabla^\bt=\nabla^{TW}$ be the Tanaka--Webster connection. Recall that $\nabla^\bt J=\nabla^\bt\d\theta=0$, so that $R^\bt$ is a section of $\Lambda^2\D^*\otimes \u(\D)$ and $\hol(\nabla^\bt)\subseteq\u(m)$.

Let $E_x\subseteq \D_x$ be a Lagrangian subspace, i.e.,~a maximal vector subspace such that the restriction of $\d\theta$ to $E_x$ is zero, 
\[ \d\theta_x|_{E_x\wedge E_x}=0.\]
We obtain the  $g$-orthogonal decomposition
$$\D_x=E_x\oplus JE_x.$$
The maximal subalgebra of  
\[\u (\D_x)= \left\{ \begin{pmatrix} A&B \\-B &A\end{pmatrix} \mid A\in \so(\D_x), B\in \odot^2 \D_x\right\},\] 
 preserving this decomposition is of the form 
$$\left\{\left(\begin{matrix}
A&0\\0&A
\end{matrix} \right)\mid A\in\so(E_x) \right\},$$
where the matrices are written with respect to an orthonormal basis $e_1,\dots,e_m,Je_1,\dots,Je_m$ of $\D_x$ with $e_1,\dots,e_m\in E_x$. 
This defines the embedding $\so(E_x)\hookrightarrow\su (\D_x)$, and we will denote by $\so(E_x)$ the image of this embedding. If the subspace $E_x$ is not specified, then we denote such subalgebra by $\so(m)$. We denote the 1-dimensional subalgebra $\Real J\subseteq\u(m)$ by $\u(1)$.

\begin{theorem} \label{ThClass} Let $(M,J,\theta)$ be a pseudo-Hermitian manifold of dimension $2m+1\geq 7$. Suppose that $\bt\neq 0$. Then there are the following possibilities for the  holonomy algebras $\hol(\nabla^\bt)$ of the Tanaka--Webster connection and $\hol(\nabla)$ of the Schouten connection: 
	\begin{itemize}
		\item[ 1.] $\hol(\nabla^\bt)=\u(m)$, and either $\hol(\nabla)=\u(m)$ or $\hol(\nabla)=\su(m)$;
		\item[ 2.] $\hol(\nabla^\bt)=\hol(\nabla)=\su(m)$;
		\item[ 3.] $\hol(\nabla^\bt)=\so(m)\oplus\u(1)$ and $\hol(\nabla)=\so(m)$; 
		\item[ 4.] $\hol(\nabla^\bt)=\hol(\nabla)=\so(m)$.
	\end{itemize}
	In particular, either $\hol(\nabla^\bt)=\hol(\nabla)$ or $\hol(\nabla^\bt)=\hol(\nabla)\+\Real J$.
\end{theorem}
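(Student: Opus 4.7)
The plan is to combine the dichotomy results of Section~\ref{secHol} with two special features of the pseudo-Hermitian setting: the inclusion $\hol(\nabla^\bt)\subseteq\u(\D)\cong\u(m)$ (which follows from $\nabla^\bt J=\nabla^\bt g=0$) and the anticommutation relation $\bt J+J\bt=0$ satisfied by the Webster torsion. I would proceed in three stages: first reduce to the case where $\hol(\nabla^\bt)$ acts irreducibly on $\D$, then identify $\hol(\nabla^\bt)$ among the irreducible subalgebras of $\u(m)$, and finally read off $\hol(\nabla)$ from $\hol(\nabla^\bt)$ via the codimension-one dichotomy.

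For irreducibility, I would suppose there is an orthogonal $\hol(\nabla^\bt)$-invariant splitting $\D=\D_1\+\D_2$. Since $J$ is preserved, each $\D_i$ is complex, and hence $\d\theta(X_1,Y_2)=g(X_1,JY_2)=0$ for $X_i\in\D_i$, so $\d\theta$ splits too. Plugging triples $X,Y,Z\in\D_1$ into the modified Bianchi identity~\eqref{BianchiTWnew} and comparing $\D_1$- and $\D_2$-components forces $\bt(\D_1)\subseteq\D_1$, and symmetrically for $\D_2$. Substituting mixed triples $X,Y\in\D_1$, $Z\in\D_2$ then yields the endomorphism identity $Q^\bt(X,Y)|_{\D_2}=\d\theta(X,Y)\,\bt|_{\D_2}$, whose left-hand side lies in $\u(\D_2)$ (hence is skew-Hermitian) while the right-hand side is $g$-symmetric. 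Both must therefore vanish; using non-degeneracy of $\d\theta|_{\D_1}$ this gives $\bt|_{\D_2}=0$, and symmetrically $\bt=0$, contradicting the hypothesis. Low-rank corner cases with $\dim_\Co\D_i=1$ require a separate small argument enabled by the dimension bound $m\geq 3$.

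With irreducibility in hand, I would split on whether $\bt$ is Codazzi. If not, Proposition~\ref{propRieamHolor} confines $\hol(\nabla^\bt)$ to be a Riemannian holonomy algebra inside $\u(m)$ or equal to $\sp(k)\+\u(1)$; Berger's list then reduces the candidates to $\u(m)$, $\su(m)$, $\sp(k)$ (for $m=2k$), $\so(m)$, the Hermitian-symmetric holonomy $\so(m)\+\u(1)$, and $\sp(k)\+\u(1)$. The quaternionic cases $\sp(k)$ and $\sp(k)\+\u(1)$ I would rule out by exploiting that they preserve a quaternionic triple $(J,I,K)$ of parallel almost-complex structures; combining $\bt J+J\bt=0$ with the symmetry of $\bt$ and the pairwise-symmetry defect~\eqref{pairwise} of $R^\bt$ forces $\bt=0$. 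If $\bt$ is Codazzi, I would use Theorem~\ref{thHolnabl} to reduce the question to $\hol(\nabla)$, analyse the Schouten curvature on Legendrian bivectors via Theorem~\ref{ThAS1}, and show directly that $\hol(\nabla)$ lands in the same short list, using that in this case $R^\bt(\xi,\cdot)=0$ makes the curvature structure very rigid.

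For the last stage, given $\hol(\nabla^\bt)$ in the short list, Theorem~\ref{thHolnabl} (or, in the non-Codazzi subcases, Theorem~\ref{theorHoltild} combined with Theorem~\ref{thholW}) shows that $\hol(\nabla)$ either equals $\hol(\nabla^\bt)$ or is a codimension-one normal subgroup of it. Since $\su(m)$ and $\so(m)$ are simple for $m\geq 3$, they admit no proper codimension-one ideal, giving cases~2 and~4. Otherwise $\u(m)=\su(m)\+\Real J$ and $\so(m)\+\u(1)=\so(m)\+\Real J$ each have a unique codimension-one ideal, and using the identity $\N^W=\bt+\tfrac{1}{m}\rho$ from Section~\ref{secPsHerm} one checks that the extra generator lies in $\Real J$, producing cases~1 and~3. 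The hard part will be the second stage, and in particular ruling out the hyperk\"ahler candidates $\sp(k)$ and $\sp(k)\+\u(1)$: one has to carefully unwind the interaction of the Webster torsion with the quaternionic structure, and make sure that no further Hermitian symmetric-space holonomy inside $\u(m)$ slips through Berger's list.
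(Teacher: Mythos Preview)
Your first stage contains a genuine gap that undermines the whole strategy: the claim ``since $J$ is preserved, each $\D_i$ is complex'' is false. If $\D_1$ is a minimal $\hol(\nabla^\bt)$-invariant subspace, then $J\D_1$ is also invariant, but the two alternatives are $J\D_1=\D_1$ (the complex case you treat) \emph{or} $J\D_1\cap\D_1=0$, i.e.\ $\D_1$ is totally real. Your Bianchi argument uses $\d\theta(\D_1,\D_2)=0$, which needs the $\D_i$ complex; for a Lagrangian $\D_1$ one has $\d\theta|_{\D_1}=0$ instead and the argument collapses. This is not a corner case: in item~4 of the theorem one has $\hol(\nabla^\bt)=\so(m)$, which preserves the Lagrangian splitting $\D=E\oplus JE$ and is therefore \emph{reducible}. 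So irreducibility of $\hol(\nabla^\bt)$ simply does not hold in general, and any argument that starts from it cannot reach case~4. Even in the complex-irreducible case there is a second problem: Berger's list inside $\u(m)$ contains every Hermitian symmetric isotropy (Grassmannian type $\mathfrak{s}(\u(p)\oplus\u(q))$, $\so(m)\oplus\u(1)$, $\so^*(2k)$-type, etc.), not just the short list you wrote down; your final sentence acknowledges this but offers no mechanism. Finally, your last stage invokes Theorem~\ref{thHolnabl} and Theorem~\ref{theorHoltild} for the general dichotomy, but the former needs the Codazzi condition and the latter only compares $\widetilde{\Hol}(\nabla^\bt)$ with $\Hol(\nabla^\bt)$, not $\Hol(\nabla)$ with $\Hol(\nabla^\bt)$; without Codazzi you have no codimension-one statement available.

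The paper avoids all of this by reversing the logic: rather than bounding $\hol(\nabla^\bt)$ from above via irreducibility and Berger, it bounds it from \emph{below}. One writes $R^\bt=R_0+R_\bt$ with $R_\bt$ an explicit section of $\Lambda^2\D^*\otimes\su(\D)$ built from $\bt$ and $J$, and $R_0\in\R(\u(\D))$ satisfying the genuine Bianchi identity. The eigenspaces of $\bt$ (paired $\pm\lambda$ by $\bt J=-J\bt$) produce a Lagrangian $E$, and a direct computation of $R_\bt$ on an eigenbasis yields $\so(E)\subseteq\hol(\nabla^\bt)$ and, via the Wagner connection, $\so(E)\subseteq\hol(\nabla)$. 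Since $\u(m)=\so(m)\oplus(\odot^2\Real^m)_0\oplus\Real J$ with the middle summand $\so(m)$-irreducible for $m\ge3$, any subalgebra containing $\so(m)$ is one of the four on the list. The matching of $\hol(\nabla)$ with $\hol(\nabla^\bt)$ is then done by a separate argument (the paper's Lemma~\ref{mainlemma}) analysing the structure of $R_0$, $C$, and $\nabla\bt$ when $\hol(\nabla)\subseteq\so(m)\oplus\u(1)$, rather than by invoking the Codazzi dichotomy.
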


\begin{proof}
For the proof we 
define the section $R_\bt$ of $\Lambda^2\D^*\otimes \so(\D^*)$ by
	$$R_\bt(X,Y)=-\tfrac12\left(JX \wedge \bt Y + \bt X \wedge JY+ X \wedge\bt J Y+ \bt J X \wedge Y\right),$$ 
	where we use the isomorphism $\D\wedge \D\cong \so(\D)$ given by
	\[(X\wedge Y) (Z):= g(X,Z)Y-g(Y,Z)X.\]
Because
\[\bt=-\tfrac{1}{2}\left([\xi,X]+J[\xi,JX]\right)\]
anti-commutes with $J$, the tensor $R_\bt$ is a section of $\Lambda^2\D^*\otimes \u(\D)$ and satisfies 
 \[R_\bt(JX,JY)=-R_{\bt}(X,Y).\]
%Fix a point $x\in M$ and identify $\D_x$ with $\Real^{2m}$. Let $\omega$ denotes $(\d\theta)_x$. Let $\h\subseteq\u(m)$ be a subalgebra.
%Let $\R^\bt(\h)$ denote the space of maps
%$$R^\bt:\Lambda^2\Real^{2m}\to\h$$ such that there exists a symmetric endomorphism $\bt$ anti-commuting with $J$ and such that
%\begin{equation}\label{BianchiTWnewAlg}
%\underset{X Y Z}{\Su} R^\bt(X,Y)Z=\underset{X Y Z}{\Su} \omega(X,Y)\bt Z,  \quad\text{for all } X,Y,Z\in\Real^{2m}.
%\end{equation}
%
%
%
%\begin{lem}
%	The tensor field $R_\bt$ defined by the equality 
%	$$R_\bt(X,Y)=JX \wedge \bt Y + \bt X \wedge JY+ X \wedge\bt J Y+ \bt J X \wedge Y,\quad X,Y\in\Gamma(D),$$ 
%	satisfies the conditions
Moreover, a direct computation yields
\[
\underset{X Y Z}{\Su} R_\bt(X,Y)Z=\underset{X Y Z}{\Su} \d\theta(X,Y)\bt Z, \\
\]
%\end{lem}
for all $X,Y,Z\in\D$.
Hence we define $$R_0=R^\bt-R_\bt,$$ and  recall that the curvature of the Tanaka--Webster connection satisfies the following Bianchi identity
\begin{equation}\label{BianchiTWnew1}
\underset{X Y Z}{\Su} R^\bt(X,Y)Z=\underset{X Y Z}{\Su} \d\theta(X,Y)\bt Z,  \quad\text{for all } X,Y,Z\in\D,
\end{equation}
so that
 \begin{equation}\label{R0Bianchi}
\underset{X Y Z}{\Su} R_0(X,Y)Z=0,  \quad\text{for all } X,Y,Z\in\D.\end{equation}
As a consequence, 
$R_0$ is a section of  $\R(\u (\D))$.
 In particular, 
\begin{equation}
\label{R0J}
R_0(JX,JY)=R_0(X,Y),\quad \text{for all } X,Y,Z\in\D.\end{equation}
%Let $R^\bt\in\R^\bt(\u(m))$. Fix $\bt$ such that $R^\bt$ satisfies \eqref{BianchiTWnewAlg}. Then $R^\bt-R_\bt\in\R(\u(n))$. This shows that
%$$\R^\bt(\u(m))=\R(\u(m))+R_\bt.$$
%Let $\h\subseteq\u(m)$ be a subalgebra. Let $R^\bt\in\R^\bt(\h)$. Then  %$R^\bt\in\R^\bt(\u(m))$, and there exist $R_0\in \R(\u(m))$ and $\bt$ such that
%$$R^\bt=R_0+R_\bt.$$
%Recall that each $R_0\in \R(\u(m))$ satisfies 
We conclude that, for each point $y\in M$ and all vectors $X,Y\in \D_y$, it holds
\begin{eqnarray}
\label{R0hol}
2R_0(X,Y)&=&R^\bt(JX,JY)+R^\bt(X,Y)\in\hol_y(\nabla^\bt),\\
\label{Rthol}
2R_\bt(X,Y)&=&-R^\bt(JX,JY)+R^\bt(X,Y)\in\hol_y(\nabla^\bt),
\end{eqnarray}
i.e.,, both  $R_0$, and $R_\bt$ take values in $\hol(\nabla^\bt)$.

For $R\in \Lambda^2\D^*\otimes \so(\D)$ we define its Ricci tensor
\[\Ric(R)(X,Y)=\mathrm{tr}(R(X,\ .\ )Y).\]
Using that 
\[ g(R_\bt(X,Je_i)Y,Je_i) = -g(R_\bt(JX,e_i)JY,e_i),\]
it is easy to check that
%\tcom{correct}
\[
\Ric(R_\bt)(X,Y)=
(m-1)\bt(X,JY),\quad \text{for all } X,Y\in\D.\]
%{\bf sign???? and coefficient}
From this and \eqref{RicTW}  it follows that
\begin{equation}
\label{RicR0}\Ric(R_0)(X,Y)=\rho(X,JY),\quad\text{for all } X,Y\in\D.\end{equation}

We will now use $R_\bt$ to establish the four possibilities for the holonomy algebra of the Tanaka--Webster connection.
Let us fix a point $y\in M$ and look at the structure of the symmetric endomorphism $\bt_y$ of $\D_y$. Denote by $E_\lambda$ the eigenspace of $\bt_y$ corresponding to the eigenvalue $\lambda$. Since $\bt$ anti-commutes with $J$, eigenvalues come in pairs $\pm\lambda$ and 
$$JE_\lambda=E_{-\lambda}.$$
Hence we have an orthogonal decomposition 
\[\D_y=E^+\+E^-\+\ker(\bt),\]
where $E^\pm$ is the sum of the eigenspaces to positive (negative) eigenvalues, and consequently a $g$-orthogonal decomposition with a Lagrangian subspace $E_y$,
$$\D_y=E_y\oplus JE_y$$ such that 
$E^+\subseteq E_y$ and $E^-\subseteq JE_y$.
In particular, we can fix a basis $e_1,\dots,e_m$ of $E_y$ consisting of eigenvectors of $\bt_y$ corresponding to a non-negative eigenvalues $\lambda_1, \ldots, \lambda_m$, i.e.,
\begin{equation}\label{eigenbasis}
\tau e_j=\lambda_j e_j,\quad \tau Je_j=-\lambda_j Je_j,\quad  \lambda_j\geq 0.\end{equation}
Not that if  $\bt_y$ is injective, then the Lagrangian subspace  $E_y=E^+$ is defined uniquely and we denote it by $E^{\bt}_y$. We do not assume this, but it will be relevant later.

It is easy to check that 
%\tcom{factor 1/2 added}
\begin{align}\label{Rtaujk}R_\bt(e_j,e_k)&=\tfrac{\lambda_k-\lambda_j}{2}(e_j\wedge J e_{k}+e_{k}\wedge Je_{j}),\\
\label{Rtaujkm} R_\bt(e_j,Je_{k})&=\tfrac{\lambda_j+\lambda_k}{2}(e_j\wedge e_{k}+Je_{j}\wedge Je_{k}).
\end{align}
Since $J=\sum_{k=1}^m e_k\wedge Je_k$, this implies \begin{equation}\label{Rtau(J)}
R_\bt(J)=0,
\end{equation}
and
%\begin{equation}\label{Rtau(J)}
hence 
$R_\bt$ is a section of $\Lambda^2\D^*\otimes\su(\D)$.
%\end{equation}
With our assumption $\bt\neq 0$, there exists a point $y\in M$ and  $j$ such that $\lambda_j>0$. Since all $\lambda_k$ are non-negative, so that $\lambda_j+\lambda_k\not=0$, relation~(\ref{Rthol})  implies that for each $k$ we have,
 $$e_j\wedge e_{k}+Je_{j}\wedge Je_{k}\in \hol_y(\nabla^\bt).$$
Note that if $i,j,l$ are pairwise different, then $$[e_j\wedge e_k,e_j\wedge e_l]=e_k\wedge e_l,$$
which ensures that for all $k,l=1, \ldots, m$ we have
$$e_k\wedge e_{l}+Je_{k}\wedge Je_{l}\in \hol_y(\nabla^\bt).$$
We recall that the subalgebra $\so(m)\subseteq \su (\D_y)$ is spanned by these  endomorphisms
to conclude that there exists an orthogonal decomposition $\D_y=E_y\oplus JE_y$ such that
$$\so(E_y)\subseteq\hol_y(\nabla^\bt).$$ 
We can split $\u(m)$ into $\so(m)$-modules 
$$\u(m)=\so(m)\oplus (\odot^2 \Real^m)_0\oplus\Real J,$$
where $(\odot^2 \Real^m)_0$ denotes the symmetric trace-free $m\times m$ matrices.
Since $\so(m)\subseteq \hol_y(\nabla^\bt)$, the projection of $\hol_y(\nabla^\bt)$ to $ (\odot \Real^m)_0$ is invariant under $\so(m)$. However, 
if $m\geq 3$,  the $\so(m)$-module is $(\odot \Real^m)_0$ is irreducible, so that $\hol(\nabla^\bt)$ is one of the Lie algebras 
\begin{equation}\label{EqList}\u(m),\quad \su(m),\quad \so(m)\oplus\Real J,\quad \so(m).\end{equation}
This proves the statement in the theorem about the four possible holonomy algebras of the Tanaka--Webster connection.

For the remaining statements about  the holonomy $\hol_y(\nabla)$ of the Schouten connection $\nabla$, we will use the tensor $R_0$, but  first recall from~(\ref{RWtxy}) that
 \begin{equation}\label{RWRWTtau}
R^W(X,Y)=R^\bt(X,Y)+d\theta(X,Y)C,\quad \text{for all } X,Y\in\D,\end{equation}
where $R^W$ is the curvature of the Wagner connection and
$C\in\u(\D)$.
With~(\ref{R0hol}) and~(\ref{Rthol}) this implies that for all $X,Y\in \D_y$ it holds
\begin{align}\label{2R0+2theta}
2R_0(X,Y)+2\d \theta(X,Y) C&=R^{W}(JX,JY)+R^{W}(X,Y)\in\hol_y(\nabla),\\
\label{Rbthol}
2R_\bt(X,Y)&=-R^{W}(JX,JY)+R^{W}(X,Y)\in\hol_y(\nabla),
\end{align}
where the containment in $\hol_y(\nabla)$ is guaranteed by Theorem~\ref{thholW}.
From the last equality it follows that 
$R_\bt$ is a section of $\Lambda^2\D^*\otimes \hol(\nabla)$. On the one hand, this implies that
$$\so(m)\subseteq\hol(\nabla),$$ and consequently $\hol(\nabla)$ also belongs to the list \eqref{EqList}.
On the other hand, from the definition of the Wagner connection, it implies  for each $y\in M$, that
\begin{equation}\label{R_0(alpha)}
(R_0(\alpha))_y\in \hol_y(\nabla),\quad\textrm{if } \d\theta(\alpha)=0.
\end{equation}
Moreover, 
from (\ref{Rtau(J)}) and \eqref{RWRWTtau} it follows that
\begin{equation}\label{CR_0}
C=-\tfrac{1}{2m}R^\bt(J)=-\tfrac{1}{2m}R_0(J).  
\end{equation}
%Since $(R_0)_y\in\R(\u(m))$,
%$$(R_0)_y:\u(m)\to\u(m)$$ is a symmetric endomorphism, and with respect to the decomposition
%$$\u(m)=\su(m)\oplus\Real J,$$
%it holds
%\begin{equation}\label{R0Matr}
%(R_0)_y=\left( \begin{matrix} S& F^T\\F&s\end{matrix}\right),\end{equation}
%\tcom{What's $S$, trace-free?}
%where $F$  is uniquely defined  by the trace-free part of the Ricci tensor of $(R_0)_y$, and $s$ is proportional to the trace of the Ricci tensor of $(R_0)_y$.

Since we have established that 
$\hol(\nabla^\bt)$ as well as $\hol(\nabla)$ are from the list~(\ref{EqList}), for the proof of the theorem it remains to exclude the case $\hol(\nabla)=\so(m)\+\u(1)$ and to show that $\hol(\nabla)=\so(m)$ implies that $\hol(\nabla^\bt)\subseteq \so(m)\+\u(1)$. 
We will need the following lemma.

\begin{lem}\label{LetEbt} 
Assume that $\hol(\nabla)\subseteq\so(m)\oplus\u(1)$. Then on the open set $U_\bt=\{\tau\not=0\}$  there is  a vector distribution $\E^\bt$ of  Lagrangian subspaces such that 
\begin{equation}\label{DecDEbt}
	\D|_{U_\bt}=\E^{\bt}\oplus J\E^{\bt},\end{equation}
	and 
	$$\bt =\lambda\left( \begin{matrix} \1& 0\\0&-\1\end{matrix}\right),$$
	for a positive $\lambda\in C^\infty(U_\bt)$. In particular, 
	$\bt^2=\lambda^2\id_\D$. 
%	\acom{why line bundle?}
%	\tcom{That was a typo}
 \end{lem}

\begin{proof} 
At each point $y\in U_\bt$ we can chose an eigen-basis for $\bt$ as in~(\ref{eigenbasis}). 
Equations~(\ref{Rtaujk}) and (\ref{Rtaujkm}) show that also $R_\bt\not=0$ on $U_\bt$. On the other hand,   from~(\ref{Rbthol}) and by the assumption we know that $R_\bt\in \Lambda^2\D^* \otimes (\so(m)\+\u(1))$, which  by equation~(\ref{Rtaujk}) means that  $\lambda_j=\lambda_k$, i.e., 
	$$\bt_y=\lambda\left( \begin{matrix} \1& 0\\0&-\1\end{matrix}\right),\quad\lambda > 0.$$
This works at any $y\in U_\bt$ and hence defines a positive function $\lambda\in C^\infty(U)$. The distribution $\E^\bt$ is given as kernel of $\bt-\lambda \id_\D$ and we have $J\E^\bt=\ker(\bt+\lambda\id_\D)$.
 \end{proof}
%
%Under the assumptions of the lemma we obtain the orthogonal decomposition
%\begin{equation}\label{DecDEbt1}
%D_y=E^{\bt}\oplus JE^{\bt}.\end{equation} for all $y$ in the open set $U_\bt\subseteq M$, where $\bt$ is non-vanishing. Equivalently, on the set $U_\bt$ the field $\bt$ satisfies
%$$\bt^2=\lambda^2\id,$$ where $\lambda$ is a positive function. 
%
%
%
%
%
%\begin{lem}\label{LetEbt} If for some point $y\in M$, $(R_\bt)_y$ is non-zero and in $\Lambda^2\D^*_y\otimes \left(\so(E_y)\oplus\u(1)\right)$, then  
%	the eigenvalues of $\bt_y$ are non-zero and all positive eigenvalues of $\bt_y$ are the same, i.e.,
%	$$\bt_y=\lambda\left( \begin{matrix} \id_m& 0\\0&-\id_m\end{matrix}\right),\quad\lambda > 0,$$
%	with respect to the orthogonal decomposition \begin{equation}\label{DecDEbt}
%	D_y=E^{\bt}_y\oplus JE^{\bt}_y.\end{equation}
% \end{lem}
%\begin{proof} From \eqref{Rtaujk} it follows that if $\lambda_j\neq \lambda_k$, then
%$R_\bt(e_j,e_k)$ has non-trivial projections to $(\odot^2 \Real^m)_0$. Thus, the assumptions of the lemma imply $\lambda_j=\lambda_k$. \end{proof}
%
%Under the assumptions of the lemma we obtain the orthogonal decomposition
%\begin{equation}\label{DecDEbt1}
%D_y=E^{\bt}\oplus JE^{\bt}.\end{equation} for all $y$ in the open set $U_\bt\subseteq M$, where $\bt$ is non-vanishing. Equivalently, on the set $U_\bt$ the field $\bt$ satisfies
%$$\bt^2=\lambda^2\id,$$ where $\lambda$ is a positive function. 
%
%
%

  The proof of the theorem will now follow from the next lemma.

\begin{lem}
\label{mainlemma} If $\hol(\nabla)\subseteq\so(m)\oplus\u(1)$, then $\hol(\nabla)=\so(m)$, 
	$\hol(\nabla^\bt)$ is one of $\so(m)$ or $\so(m)\oplus\u(1)$, and $(M,\theta,g)$ is locally sub-symmetric.
\end{lem}

\begin{proof}
Let $U_\bt=\{\bt\not=0\}$ as in  Lemma \ref{LetEbt}, which yields the $g$-orthogonal decomposition $\D|_{U_{\bt}}=\E^\bt\+J\E^\bt$ of~(\ref{DecDEbt}), and for the moment we will consider all tensor fields restricted to $U_\bt$.
We have that 
$R_0\in\R(\u(\D))$, so that 
$$R_0:\u(D)\to\u(\D)$$ is a symmetric endomorphism, and with respect to the decomposition
$$\u(\D)=\su(D)\oplus\langle J\rangle,$$
it holds 
\begin{equation}\label{R0Matr}
R_0=\left( \begin{matrix} S& F^T\\F&s\end{matrix}\right),\end{equation}
where $S$ a symmetric endomorphsm of $\su(\D)$, $F$  is a  section of $\su(\D)^*$, uniquely defined by the trace-free part of the Ricci tensor of $R_0$, and $s$ is proportional to the trace of the Ricci tensor of $R_0$.
From this and  equation \eqref{R_0(alpha)} 
%and \eqref{R0Matr} and 
 it follows that $S$ is a section of $\Lambda^2\D^*\otimes \so(E^\bt)$. 
Let $X,Y,Z\in E^\bt$ be linearly independent vectors (recall that $m\geq 3$).
The bi-vectors of the form
$$X\wedge Y+JX\wedge JY$$
belong to $\su (\D)$.
Consequently, by~(\ref{R0J}), 
\begin{eqnarray*}
2R_0(X\wedge Y)&= & R_0(X\wedge Y+JX\wedge JY)\\
&=&S(X\wedge Y+JX\wedge JY)+F(X\wedge Y+JX\wedge JY)
\\
&\in& \so(E^\bt)\oplus\langle J\rangle.
\end{eqnarray*}
Using this and projecting to $JE^\bt$  the Bianchi identity \eqref{R0Bianchi} written for the vectors $X,Y,Z$, we obtain 
$$\underset{X Y Z}{\Su}F(X\wedge Y+JX\wedge JY)Z=0.$$ By the linear independence of $X$, $Y$ and $Z$, this implies
$$F=0.$$
Hence, $R_0$ is a section of  $\Lambda^2\D^*\otimes \so(E^\bt)\oplus\langle J\rangle$. From this and \eqref{CR_0} it follows that $$C=fJ,$$ where $f\in C^\infty(U_\bt)$.

Let $L\subseteq \odot^2\D$ be the subbundle of $g$-symmetric endomorphisms of $\D$ that commute with $\so(\E^\bt)$ and anti-commute with $J$. 
The fibres of this bundle are invariant under $\so(m)\+\u(1)$ and hence under $\hol(\nabla)\subseteq \so(m)\+\u(1)$. Indeed, by definition $B\in L$ commutes with $\so(m)$ and $[J,B]=2JB$ clearly anti-commutes with $J$. Since $J$ is parallel and $\so(m)\subseteq \hol(\nabla)$, $J$ commutes with $\so(m)$, so that $JB$ also commutes with $\so(m)$. We conclude that $L$ is invariant under parallel transport.

Moreover, it is easy to check that in the decomposition \eqref{DecDEbt}, 
$$\left( \begin{matrix} a\1& b\1\\b\1 &-a\1\end{matrix}\right)=a\bt_y+bJ\bt_y.$$
Hence, over $U_\bt$, the sections $\bt$ and 
$J\bt$ provide a frame for $L$. Since $L$ was shown to be invariant under $\nabla$-parallel transport, this implies that over $U_\bt$ we have 
$$\nabla \bt=\alpha\otimes \bt+\beta\otimes J\bt,$$
where $\alpha,\beta\in \Gamma(\D^*|_{U_\bt})$. Denote by $V,U\in\Gamma(\D|_{U_\bt})$ the dual vector fields to $\alpha$ and $\beta$, respectively. 
Using Lemma \ref{lemRAxi}, we obtain
\[g(R^\bt(\xi,X)Y,Z)=\alpha(Y)g(\bt X,Z)+\beta(Y)g(J\bt X,Z)-\alpha(Z)g(\tau X,Y)-\beta(Z)g(J\tau X,Y)\]
for all $X,Y,Z\in\Gamma(\D|_{U_\bt})$.
This means that
$$R^\bt(\xi,X)=\alpha^\sharp\wedge \bt X+\beta^\sharp \wedge J\bt X,\quad\text{for all } X\in\Gamma(\D|_{U_\bt}),$$
where the sharp denotes the metric dual.
Since $R^\bt(\xi,X)\in\u(D)$, it holds 
$$[J,R^\bt(\xi,X)]=0,$$ which implies that $\beta=J^*\alpha$. Hence, together with $C=fJ$, from \eqref{RWRAC} it follows for the curvature of the Wagner connection that
\begin{equation}
\label{RWRWTtau1}
R^{W}(\xi,X)=\alpha^\sharp \wedge \bt X+(J\alpha^\sharp)\wedge J\bt X-X(f)J,\quad\text{for all } X\in\Gamma(\D|_{U_\bt}).\end{equation}
If $\alpha \neq 0$, then such elements generate either $\su(m)$ or $\u(m)$. This is in contradiction with the assumption $\hol(\nabla)\subseteq\so(m)\oplus\u(1)$, so that 
$\alpha=\beta=0$, and consequently,
$$\nabla\bt=0.$$ 
This implies that $\bt$ is non-vanishing, i.e.,~$U_\bt=M$,  that 
the distribution $E^\bt$ is $\nabla$-parallel and that the subbundle $\so(E^\bt)\subseteq\u(\D)$ is $\nabla$-parallel. That $E^\bt$ is $\nabla$-parallel implies that 
$$\hol(\nabla)=\so(m).$$  
Moreover recall~(\ref{Rtau(J)}), which now implies that  $R_\bt$ is a section of $\Lambda^2\D^*\otimes \so(E^\bt)$, which, since $J$ and $\bt$ are $\nabla$-parallel, 
implies that $R_\bt$ is also $\nabla$-parallel. 
Hence in order to show that $(M,J,\theta)$ is locally sub-symmetric, by the result reviewed in Section~\ref{subsym-sec1} or \cite[Theorem~2.1]{FG}, it remains to show that $\nabla R^\bt=\nabla R_0=0$.
%It holds $R_0(\alpha)\in \so(E^\bt)$ if $\d\theta(\alpha)=0$. 

By Theorem \ref{ThAS1},
$R^W(\xi,X)=-X(f)J$ takes values in $\hol(\nabla)=\so(m)$, so that by~(\ref{RWRWTtau}) we must have  $X(f)=0$ for all $X\in \Gamma(\D)$. Since $\D$ is contact, this implies that $f$ is constant. We have seen that $R_0\in
\R(\so(m)\oplus \Real J)$. 
 Now recall that the space $\R(\so(m)\oplus \Real J)$ is one-dimensional, see \cite[Table 1]{C-S} or \cite{Salamon89} for a proof, and  its elements $R$ are uniquely determined by their Ricci tensor. By~(\ref{RicR0}) the Ricci tensor  $\Ric(R_0)$ of $R_0$  is equal to the Ricci-form $\rho$, and hence, with~(\ref{psHRicci}), determined by
$R_0(J)$. 
By~(\ref{CR_0}) an since $f$ is constant, we have that $R_0(J)=4mfJ$ is $\nabla$-parallel, so that $\nabla R_0=0$.
We conclude that  $(M,\theta,g)$ is locally sub-symmetric. 
Since $\nabla\bt =0$, Theorem~\ref{thHolnabl}  implies that $\hol(\nabla^\bt)$ is one of $\so(m)$ or $\so(m)\oplus\u(1)$. This proves the lemma. \end{proof}
To finish the proof of the theorem, recall that we have seen that both $\hol(\nabla^\bt)$ and $\hol(\nabla)$ are on the list~(\ref{EqList}). 
Lemma~\ref{mainlemma} then excludes the case $\hol(\nabla)=\so(m)\+\u(1)$ and shows that $\hol(\nabla)=\so(m)$ only if  $\hol(\nabla^\bt)\subseteq \so(m)\+\u(1)$.\end{proof}

 \section{Consequences and example}\label{SecCons}

 \subsection{Consequences}
 From Theorem~\ref{ThClass} and Lemma~\ref{mainlemma} in its proof we obtain the following consequences.
 
 \begin{prop}\label{PropSym} Let $(M,J,\theta)$ be a pseudo-Hermitian manifold of dimension $2m+1\geq 7$ with $\bt\neq 0$. Then $\hol(\nabla)=\so(m)$ if and only if $(M,J,\theta)$ is locally sub-symmetric. \end{prop}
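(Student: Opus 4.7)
The proof is an equivalence, which I would handle in two separate passes.

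The forward implication follows essentially for free from the analysis already carried out in the proof of Theorem~\ref{ThClass}. If $\hol(\nabla)=\so(m)$, then in particular $\hol(\nabla)\subseteq\so(m)\oplus\u(1)$, and Lemma~\ref{mainlemma} directly yields that $(M,\theta,g)$ is locally sub-symmetric. So this direction is immediate.

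For the converse, suppose $(M,J,\theta)$ is locally sub-symmetric with $\bt\neq 0$. From the characterization of locally sub-symmetric spaces in equation~(\ref{locsympart}), I obtain $\nabla\bt=0$. Ambrose--Singer (Theorem~\ref{ThAS1}) then shows that every element of $\hol(\nabla)$ commutes with $\bt$: each generator $\tau_\gamma^{-1}\circ R(\alpha)\circ\tau_\gamma$ annihilates the $\nabla$-parallel tensor $\bt$. By Theorem~\ref{ThClass}, the only possibilities for $\hol(\nabla)$ are $\u(m)$, $\su(m)$, and $\so(m)$, so the plan is to exclude the first two.

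Assume $\hol(\nabla)\in\{\u(m),\su(m)\}$. For $m\geq 3$, the group $\hol(\nabla)$ acts irreducibly on $\D$ via the real representation underlying the standard complex action on $\Co^m$. By Schur's lemma over $\Real$, the centralizer in $\mathrm{End}_{\Real}(\D)$ is the division algebra $\Co$, spanned by $\id$ and $J$. Hence any endomorphism commuting with $\hol(\nabla)$ has the form $a\,\id+bJ$ for some $a,b\in\Real$. The $g$-symmetry of $\bt$ forces $b=0$ since $J$ is $g$-skew, and the pseudo-Hermitian identity $\bt J+J\bt=0$ then forces $a=0$. This contradicts $\bt\neq 0$, so these two cases are excluded and $\hol(\nabla)=\so(m)$.

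The argument is a short combination of Lemma~\ref{mainlemma} with a Schur-type centralizer computation, and I do not foresee a substantive obstacle. The only point requiring mild care is the verification that, for $m\geq 3$, both $\u(m)$ and $\su(m)$ act irreducibly on the real vector space underlying $\Co^m$, which is standard.
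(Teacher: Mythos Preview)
Your proposal is correct and follows essentially the same approach as the paper: the forward direction via Lemma~\ref{mainlemma}, and the converse by using $\nabla\bt=0$ together with Schur's lemma to rule out the irreducible cases $\u(m)$ and $\su(m)$ from Theorem~\ref{ThClass}. The paper's version is slightly more compressed, applying Schur directly to the symmetric endomorphism $\bt$ to get $\bt=\lambda\,\id$ (rather than passing through the full centralizer $\Co$), but the substance is identical.
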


  \begin{proof} 
  If $\hol(\nabla)=\so(m)$, Lemma~\ref{mainlemma} applies and yields that $(M,J,\theta)$ is locally sub-symmetric.
  
For the converse, suppose that $(M,J,\theta)$ is locally sub-symmetric. By Theorem~\ref{ThClass} we have to exclude the cases that   $\hol(\nabla)$ is $\su(m)$ or $\u(m)$. In both cases  
  $\hol(\nabla)$ is irreducible and commutes with $\bt$. Since $\bt$  is symmetric,   Schur's lemma  implies that $\bt$ is a real multiple of the identity. Since $\bt$ anti-commutes 	with $J$, this shows that $\lambda=0$, which contradicts the assumption that $\bt\not=0$. Thus, $\hol(\nabla)=\so(m)$.  
 \end{proof}

 \begin{prop} Let $(M,J,\theta)$ be a pseudo-Hermitian manifold of dimension $2m+1\geq 7$ with $\bt\neq 0$. If $\nabla\bt=0$, then $(M,J,\theta)$ is locally sub-symmetric.
 \end{prop}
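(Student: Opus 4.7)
The plan is to reduce the statement to an application of Proposition~\ref{PropSym}, which tells us that, under the standing assumption $\bt\neq 0$, local sub-symmetry is equivalent to $\hol(\nabla)=\so(m)$. Thus it suffices to exclude, assuming $\nabla\bt=0$, the remaining possibilities from Theorem~\ref{ThClass}, namely $\hol(\nabla)=\u(m)$ and $\hol(\nabla)=\su(m)$. Note that the case $\hol(\nabla)=\so(m)\oplus \u(1)$ is already ruled out by Lemma~\ref{mainlemma}, so only these two irreducibly acting possibilities remain.

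First, I would observe that $\nabla\bt=0$ means $\bt$ is invariant under $\nabla$-parallel transport, and hence $\bt_x$ commutes with every element of the horizontal holonomy algebra $\hol_x(\nabla)\subseteq\so(\D_x)$ for every $x\in M$. Parallel transport also implies that $\bt$ vanishes either identically or nowhere, so the assumption $\bt\neq 0$ gives us a nonvanishing parallel symmetric endomorphism field that anti-commutes with $J$ (this last property is a pointwise algebraic feature of the Webster torsion recorded in Section~\ref{secPsHerm}).

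Now I would carry out the algebraic step: both $\u(m)$ and $\su(m)$, in their standard embeddings into $\so(2m)$, act irreducibly on $\Real^{2m}$ (for $m\ge 3$), and their real commutant inside $\mathrm{End}(\Real^{2m})$ is the complex line $\Real\cdot\id\oplus\Real\cdot J$. Therefore if $\hol(\nabla)$ is $\u(m)$ or $\su(m)$, then $\bt_x=a\id+bJ$ at each point for some $a,b\in\Real$. The condition that $\bt$ is $g$-symmetric forces $b=0$ (since $J$ is $g$-skew), and the condition that $\bt$ anti-commutes with $J$ then forces $a=0$. This contradicts $\bt\neq 0$, so neither possibility can occur. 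Hence $\hol(\nabla)=\so(m)$, and Proposition~\ref{PropSym} yields local sub-symmetry. The main potential pitfall is making sure the Schur-type argument is stated in the \emph{real} category, since both candidate holonomy algebras are complex in character and one needs to combine the $g$-symmetry of $\bt$ with its $J$-anti-commutation to squeeze out a contradiction; both properties are essential and the argument would fail if only one were imposed.
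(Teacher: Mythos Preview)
Your proof is correct and follows essentially the same route as the paper: exclude the irreducible holonomy options via a Schur-type argument applied to the parallel symmetric endomorphism $\bt$, then invoke Proposition~\ref{PropSym}. The only cosmetic difference is that the paper phrases the Schur step more directly (a \emph{symmetric} endomorphism commuting with an irreducible subalgebra of $\so(\D_x)$ is automatically a scalar, since its eigenspaces are invariant), whereas you first compute the full real commutant $\Real\id\oplus\Real J$ and then use symmetry to kill the $J$-component; the outcome is identical.
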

 
 \begin{proof} Suppose that $\hol(\nabla)$ is irreducible. Since $\bt$ is a  parallel symmetric field of  endomorphisms, it commutes with $\hol(\nabla)$, and consequently it is proportional to $\id_{\Gamma(\D)}$. Since $\bt$ anticommutes with $J$, this implies $\bt=0$, which is a contradiction. Hence, $\hol(\nabla)$ is not irreducible. From Theorem~\ref{ThClass} it follows that $\hol(\nabla)=\so(m)$.  Proposition \ref{PropSym} then yields that  $(M,J,\theta)$ is locally sub-symmetric. \end{proof}

 \begin{prop}\label{PropClassCases} 
 	Let $(M,J,\theta)$ be a pseudo-Hermitian manifold of dimension $2m+1\geq 7$ with $\nabla\bt \neq 0$. Then 
 	\begin{itemize}
 		\item[1.] $\hol(\nabla)=\su(m)$ if and only if $(M,J,\theta)$ is pseudo-Einstein.
 		% and it holds \begin{equation}\label{bt2}  		\bt^2=\lambda^2\id,\quad \text{for a non-negative function } \lambda.  		\end{equation}
 		\item[2.] $\hol(\nabla^{TM})=\su(m)$ if and only if $\rho=0$.
 	\end{itemize}
 \end{prop}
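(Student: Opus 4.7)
The strategy is to reduce both equivalences to the vanishing of a single ``trace'' of the curvature. Since $\nabla\bt\neq 0$, Proposition~\ref{PropSym} and the preceding proposition rule out cases~3 and~4 of Theorem~\ref{ThClass}. Hence both $\hol(\nabla^\bt)$ and $\hol(\nabla)$ lie in the list $\{\u(m),\su(m)\}$, and in each case the question ``is the holonomy $\su(m)$ or $\u(m)$?'' is the question whether the curvature takes values in $\su(m)$, i.e.\ whether its projection to the line $\langle J\rangle\subset\u(m)$ vanishes. Using the decomposition $\u(m)=\su(m)\oplus\langle J\rangle$ with projection $\operatorname{pr}_J(A)=-\tfrac{1}{2m}\operatorname{tr}(A\circ J)$, Ambrose--Singer for $\nabla^\bt$ and (via Theorem~\ref{thholW}) for $\nabla^W$ reduces the proposition to computing $\operatorname{pr}_J$ of $R^\bt$ and of $R^W$.

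For Part~2, I first compute that for $X,Y\in\D$
\[
\operatorname{pr}_J\bigl(R^\bt(X,Y)\bigr)=-\tfrac{1}{m}\rho(X,Y),
\]
by unwinding the definition~(\ref{psHRicci}) of $\rho$, together with $\bt J+J\bt=0$, which guarantees $\operatorname{tr}(\bt\circ J)=0$ and, after a short calculation, $\operatorname{tr}(R^\bt(X,Y)\circ J)=2\rho(X,Y)$. So the $\Lambda^2\D$-part of $\operatorname{pr}_J(R^\bt)$ vanishes iff $\rho=0$. For the remaining $\xi$-direction I use Lemma~\ref{lemRAxi}, $J\circ\nabla_X\bt=-(\nabla_X\bt)\circ J$, and the second Bianchi identity for $\nabla^\bt$ to show that $\operatorname{pr}_J(R^\bt(\xi,X))$ is expressible as a derivative of $\rho$ and therefore vanishes automatically when $\rho=0$. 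Combined with Ambrose--Singer, this yields the equivalence $\hol(\nabla^\bt)\subseteq\su(m)\Leftrightarrow\rho=0$.

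For Part~1, I apply the same technique to $\nabla^W$, using $\hol(\nabla)=\hol(\nabla^W)$. From~(\ref{RWtxy}) and $C=\tfrac{1}{m}\hat\rho$ (the identification of the Wagner with the basic connection), for $X,Y\in\D$
\[
\operatorname{pr}_J(R^W(X,Y))=\operatorname{pr}_J(R^\bt(X,Y))+\d\theta(X,Y)\operatorname{pr}_J(C)=-\tfrac{1}{m}\rho(X,Y)+\tfrac{\operatorname{scal}^\bt}{2m^2}\d\theta(X,Y),
\]
where the constant in front of $\d\theta$ is obtained from $\operatorname{tr}(\hat\rho\circ J)=-\operatorname{scal}^\bt$. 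This quantity vanishes for all $X,Y$ iff $\rho=\frac{\operatorname{scal}^\bt}{2m}\d\theta$, i.e.\ iff $(M,J,\theta)$ is pseudo-Einstein. Under pseudo-Einstein, $C=\tfrac{h}{m}J$ with $h=\operatorname{scal}^\bt/(2m)$, so $\nabla_X C=\tfrac{X(h)}{m}J$; combining~(\ref{RNxiX}) with the second Bianchi identity $\Ric^\bt(\xi,JX)=\tfrac{1}{2m}X(\operatorname{scal}^\bt)$ recorded in Section~\ref{secPsHerm} shows that $\operatorname{pr}_J(R^W(\xi,X))=0$ as well.

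The main obstacle, and the technical heart of the argument, is the $\xi$-direction check in both parts, where the curvature is not purely algebraic but involves $\nabla\bt$. In Part~1 this is where the pseudo-Einstein Bianchi identity $\Ric^\bt(\xi,JX)=\tfrac{1}{2m}X(\operatorname{scal}^\bt)$ is essential to cancel $\nabla_X C$ against $R^\bt(\xi,X)$; in Part~2 one needs the $J$-commutation with $\nabla\bt$ to collapse the $\xi$-contribution into $\rho$. The forward implications (from holonomy in $\su(m)$ to the Ricci/pseudo-Einstein condition) are immediate from the $\Lambda^2\D$-part; only the converse requires the extra $\xi$-direction verification.
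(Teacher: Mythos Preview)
Your plan is correct and more explicit than the paper's own proof, which is a one-line citation: ``The proof follows from Theorem~\ref{ThClass}, \cite[Theorem~4.1]{Leitner18}, and \cite[Corollary~4.2]{Leitner18}.'' So you are necessarily taking a different route.

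Your approach---reducing via Theorem~\ref{ThClass} to the dichotomy $\{\u(m),\su(m)\}$ and then testing containment in $\su(m)$ by the $J$-trace $\operatorname{pr}_J$ of the curvature---is exactly what the cited results in \cite{Leitner18} establish, so you are in effect reproving them in situ. The $\Lambda^2\D$-part is clean: for $\nabla^\bt$ one has $\operatorname{tr}(R^\bt(X,Y)J)=2\rho(X,Y)$ (by the remark after~\eqref{psHRicci}), and for $\nabla^W$ the extra $\d\theta(X,Y)\operatorname{pr}_J(C)$ term produces precisely the pseudo-Einstein normalisation, using $C=\tfrac{1}{m}\hat\rho$ from the Wagner${}={}$basic identification. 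The forward implications are then immediate, as you note.

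The only place to be careful is the $\xi$-direction for the converse implications. Your computation via Lemma~\ref{lemRAxi} and $J\circ\nabla_X\bt=-(\nabla_X\bt)\circ J$ gives $\operatorname{tr}(R^\bt(\xi,X)J)$ proportional to $g(\operatorname{div}^\bt\bt,\,JX)=\Ric^\bt(\xi,JX)$, and you then need that this quantity is controlled by $\rho$ (Part~2) or by $X(\operatorname{scal}^\bt)$ in the pseudo-Einstein case (Part~1). For Part~1 the identity $\Ric^{\bt}(\xi,JX)=\tfrac{1}{2m}X(\operatorname{scal}^{\bt})$ from Section~\ref{secPsHerm} does the job exactly; for Part~2 you invoke the second Bianchi identity with torsion, which after taking the $J$-trace indeed expresses $\Ric^\bt(\xi,\cdot)$ through $\nabla\rho$ and $\rho\circ\bt$-type terms, all of which vanish when $\rho=0$. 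This step is genuine but routine once written out; it is the content behind \cite[Corollary~4.2]{Leitner18}. What your approach buys over the paper's citation is a self-contained argument that makes transparent \emph{why} the pseudo-Einstein condition is exactly the obstruction to $\hol(\nabla)\subseteq\su(m)$: it is literally the $J$-component of the Wagner curvature on $\Lambda^2\D$.
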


\begin{proof} The proof follows from Theorem \ref{ThClass}, \cite[Theorem~4.1]{Leitner18}, and  
 \cite[Corollary~4.2]{Leitner18}. \end{proof}

%\begin{lem}  If  $\hol(\nabla^\bt)$ is one of the algebras $\so(m)$, $\su(m)$, then $\hol(\nabla)=\hol(\nabla^\bt)$. \end{lem}

%Suppose that $\hol(\nabla^\bt)$ is  $\so(m)$ or $\su(m)$. Then in notation of the proof of Theorem \ref{ThClass} it holds $R_0\in\R(\su(m))$. Consequently, $R_0(J)=0$. This and \eqref{Rtau(J)} imply $R^W=R^\bt$. Applying the arguments  form the proof of Theorem \ref{ThClass}, we see that $\hol(\nabla)$ contains $\so(m)$. 

\subsection{Example}

We give here a construction for strictly pseudoconvex CR manifolds $M^{2m+1}$ with pseudo-Einstein structure $\theta$  
and  non-trivial torsion $\bt\neq 0$. These pseudo-Hermitian spaces $(M,\theta)$ have horizontal holonomy $\su(m)$.
The construction is not explict, but happens locally by a small 
arbitrary deformation of any strictly 
pseudoconvex hypersurface in complex vector space $\mathbb{C}^{m+1}$. 
Recall that vanishing torsion for a pseudo-Hermitian structure $\theta$
means that the corresponding Reeb vector is a transverse symmetry  of the CR-structure 
and vice versa. Hence, any pseudo-Hermitian structure on a CR-manifold $M$ without transverse symmetries has non-vanishing torsion.
Thus, we aim to carry out local deformations, which eliminate any 
given transverse symmetry. To the same time, results by 
Lee (cf. \cite{Lee}) guarantee the local existence of pseudo-Einstein structures for any strictly pseudoconvex hypersurface in  $\mathbb{C}^{m+1}$, 
in particular, for any such deformation within $\mathbb{C}^{m+1}$.

To make the deformation more precise, let $M'$ be any strictly pseudoconvex hypersurface in $\mathbb{C}^{m+1}$.
We choose some point $p\in M'$ and disjoint small open balls $U$ and $V$ in $\mathbb{C}^{m+1}$ with $p$ on their 
boundaries, which cover $M'$ in a neighborhood of $p$. We deform the restriction of the hypersurface $M'$ to the 
open ball  $V$ in normal direction 
of $M'$ (such that the deformation is smooth up to the boundary 
of  $M\cap V$ and trivial on the boundary and beyond).
If this deformation is small 
the resulting hypersurface $M$ in $\mathbb{C}^{m+1}$ is 
still strictly  pseudoconvex; and, if the deformation 
is non-trivial (at every point 
of $M'\cap V$), then $M\cap M'\cap V$ is empty, whereas $M$ and $M'$ coincide in the closure of  $U$,
in particular, at $p\in M$.  Note that any transverse symmetry $X$ of $M'$ is  
the restriction of some holomorphic vector field $\mathcal{X}$ of $\mathbb{C}^{m+1}$, at least locally in some small neighborhood $W'$
of $p$ in $\mathbb{C}^{m+1}$. 
(In fact, in \cite{Jac} it is argued that the existence of any transverse
symmetry on a CR manifold gives rise to a local embedding into  $\mathbb{C}^{m+1}$ as real hypersurface. This embedding 
construction is explicit and shows
that the transverse symmetry is the restriction of some 
holomorphic vector field.)
Obviously, since $M=M'$ in $U$, the restriction of the holomorphic vector field $\mathcal{X}$ cannot be tangential to the non-trivial deformation  $M$
of $M'$ in $V$. Thus, there exist no transverse symmetries on the deformation $M$ in the neighborhood $W'$ around $p$.
As already mentioned, the results in \cite{Lee} 
guarantee  a pseudo-Einstein structure $\theta$ 
on the deformed hypersurface $M$ in a small neighborhood $W\subseteq W'$ around 
$p$. Thus, 
the deformation $M\cap W$ of $M'\cap W$ around $p$ in $\mathbb{C}^{m+1}$
with $\theta$ is pseudo-Einstein and has non-trivial torsion.
Since there are no transverse symmetries, the construction 
is not locally sub-symmetric, and thus the  horizontal holonomy algebra is $\su(m)$.   

%\newpage

\section{Contact sub-Riemannian spaces with $\nabla\d\theta =0$}\label{secparaldtheta}

In this section we consider a generalization of pseudo-Hermitian  spaces: contact sub-Riemannian spaces with $\nabla\d\theta =0$. 
%\tcom{locally sub-symmetric spaces?}\acom{both pseudo-Hermitian and sub-symmetric spaces satisfy this condition, so this is generalization of both of them, but in our context I would menstion pseudo-Hermitian}

%We consider two cases

%\subsection{Case $\bt\neq 0$}

\begin{theorem}\label{dtheta0theo}
 Let $(M,\theta,g)$ be a contact sub-Riemannian space with $\dim M=2m+1\geq 7$. If  $\bt\neq 0$ and $$\nabla\d\theta =0,$$ then, after multiplying $g$ by a constant, 
	 	$(M,\theta,g)$ is pseudo-Hermitian.
	\end{theorem}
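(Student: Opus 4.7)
The idea is to build a canonical complex structure on $\D$ from $d\theta$ and $g$, and to show that the spectrum of the associated skew endomorphism reduces to a single pair $\pm i\mu$. Define $A:\D\to\D$ by $d\theta(X,Y)=g(AX,Y)$. Since $d\theta$ is skew and non-degenerate on the contact distribution, $A$ is $g$-skew and invertible, and $\nabla A=0$ follows immediately from $\nabla g=0$ and the hypothesis $\nabla d\theta=0$. The symmetric, negative-definite endomorphism $A^2$ is then parallel, and its distinct eigenvalues $-\mu_1^2<\cdots<-\mu_k^2<0$ are constant on $M$ because $\D$ is bracket-generating. The corresponding eigenspaces give a $g$-orthogonal, $\nabla$-parallel, $A$-invariant decomposition
\[
\D=\D_1\oplus\cdots\oplus\D_k,\qquad A|_{\D_i}=\mu_i J_i,
\]
with $g$-compatible complex structures $J_i$ on each $\D_i$. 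Because $AX\in\D_i$ for $X\in\D_i$, the cross terms vanish: $d\theta|_{\D_i\times\D_j}=0$ for $i\neq j$.

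The central step is to show $k=1$. A direct computation from \eqref{Schouten2} using $\nabla A=0$ together with the identity $\mathcal{L}_\xi A=-2\bt A$, extracted from $\mathcal{L}_\xi d\theta=0$, yields
\[
[R(X',Y'),A]=-2\,d\theta(X',Y')\,\bt A\qquad\text{for all }X',Y'\in\D.
\]
In particular, $R(X',Y')$ commutes with $A$, and hence preserves each $\D_l$, whenever $d\theta(X',Y')=0$. Now assume $k\geq 2$, fix $i\neq j$, and take $X,Z\in\D_i$, $Y\in\D_j$. Since $d\theta(X,Y)=d\theta(Y,Z)=0$, the Bianchi identity \eqref{BianchiSch} forces
\[
R(Z,X)Y=-\bigl(R(X,Y)Z+R(Y,Z)X\bigr)\in\D_i,
\]
and likewise $R(Z,X)W\in\D_i$ for every $W\in\D_j$. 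Applying \eqref{SCTsym} with $U=Y$ and $V=W$ and using the $g$-orthogonality of $\D_i$ and $\D_j$,
\[
0=g(R(Z,X)Y,W)+g(R(Z,X)W,Y)=-2\,d\theta(Z,X)\,\bt(Y,W).
\]
Non-degeneracy of $d\theta$ on $\D_i$ lets us choose $Z,X\in\D_i$ with $d\theta(Z,X)\neq 0$, so $\bt|_{\D_j}=0$; swapping $i$ and $j$ gives $\bt\equiv 0$, contradicting the hypothesis. Hence $k=1$.

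With $k=1$ we have $A=\mu J$ for a positive constant $\mu$ and a $\nabla$-parallel, $g$-compatible complex structure $J$. After rescaling $g\mapsto\mu g$, a direct computation yields the pseudo-Hermitian identity $g=d\theta(\cdot,J\cdot)$, together with the compatibility relation $d\theta(X,JY)+d\theta(JX,Y)=0$; the latter implies $[X,JY]+[JX,Y]\in\Gamma(\D)$ for $X,Y\in\D$, so the Nijenhuis tensor $\mathcal{N}(Y,Z)$ is horizontal. Since $\nabla^{TW}|_\D=\nabla$ and $\nabla J=0$, the Tanno-type identity $g((\nabla^{TW}_X J)Y,Z)=-\tfrac{1}{2}d\theta(X,\mathcal{N}(Y,Z))$ recalled in Section~\ref{secPsHerm} forces $d\theta(X,\mathcal{N}(Y,Z))=0$ for all $X\in\D$, and the non-degeneracy of $d\theta$ on $\D$ then gives $\mathcal{N}=0$. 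The main obstacle is the reduction in the second paragraph: one must navigate the fact that $R$ preserves the $A^2$-eigendecomposition only for bivectors in $\ker d\theta$, and use exactly the right slice of Bianchi together with \eqref{SCTsym} to extract a contradiction from $\bt\neq 0$.
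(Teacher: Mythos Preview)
Your reduction to $k=1$ has a genuine gap in the case $k=2$. Your argument with \eqref{SCTsym} and Bianchi only yields $\bt(Y,W)=0$ for $Y,W$ in the \emph{same} block $\D_j$: you choose $Z,X\in\D_i$ and $Y,W\in\D_j$ with $i\neq j$, and conclude $\bt|_{\D_j\times\D_j}=0$. ``Swapping $i$ and $j$'' then only gives $\bt|_{\D_i\times\D_i}=0$. When $k=2$ this leaves the off-diagonal block $\bt|_{\D_1\times\D_2}$ unconstrained; there is no third factor to play against, and a direct check shows that plugging the commutator identity $[R(Z,X),A]=-2\,d\theta(Z,X)\,\bt A$ back into \eqref{SCTsym} with $U\in\D_1$, $V\in\D_2$ produces a tautology, not a new relation. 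So $\bt\equiv 0$ does not follow.

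The paper closes this gap by bringing in the Wagner connection. Since $\nabla A=0$, each $\D_l$ is $\hol_x(\nabla)=\hol_x(\nabla^W)$-invariant, so $R^W$ (which is $\so(\D)$-valued) preserves the splitting for \emph{all} bivectors, not only Legendrian ones. The Bianchi identity for $R^W$ then gives, for $X_1,Y_1,Z_1\in\D_1$ and $X_2\in\D_2$ with $d\theta(X_1,Z_1)=d\theta(Y_1,Z_1)=0$ (possible because one block has dimension $\ge 4$ when $m\ge 3$), the relation $g((\bt+C)Z_1,X_2)=0$. The second ingredient is the identity $\bt\psi+\psi\bt-\psi C+C\psi=0$, which is the $\nabla^W_\xi$-parallelism of $d\theta$ unpacked; it yields $g((\bt-C)Z_1,X_2)=0$ as well, and hence the missing cross-term $g(\bt Z_1,X_2)=0$. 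Your purely Schouten-level argument has no analogue of $C$ and therefore cannot separate the symmetric $\bt$-part from the skew $C$-part in the off-diagonal direction.

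Apart from this, your overall strategy is close to the paper's but packaged differently: you diagonalise $A^2$ directly, whereas the paper argues via holonomy-invariant $\psi$-stable subspaces and Schur's lemma. Your endgame --- rescaling to get $J$, checking $g=d\theta(\cdot,J\cdot)$, and using the Tanno identity with $\nabla J=0$ to force $\mathcal{N}=0$ --- is clean and correct.
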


\begin{proof}Define endomorphism field $\psi:\D\to \D$  by the equality
\begin{equation}\label{defpsi} g(X,Y)=\d\theta(X,\psi (Y)),\end{equation}
for $X,Y\in \D$. Then $\psi$ is skew with respect to $g$ and consists of isomorphisms. Since $\nabla g=0$ and $\nabla \d\theta=0$ we also have that $\nabla \psi=0$.

For the Wagner connection $\nabla^W$, recall from Theorem~\ref{thholW} that $\hol_x(\nabla)=\hol_x(\nabla^W)$. Hence, the condition $\nabla\d\theta =0$ implies that $\nabla_\xi^W (\d\theta)=0$, which implies
\[ 0=\left[ N^W,\psi\right](X) +[\xi, \psi (X)]-\psi([\xi,X]).\]
With $N^W=\bt+C$ and using that
\begin{eqnarray*}
0&=&\d^2\theta(\xi,\psi(X),\psi(Y)\ =\  -\L_\xi g(X,\psi(Y) )+g(\psi([\xi,X])-[\xi,\psi(X)],Y)
\\
&=&
g\big(2 \,\psi \circ \bt (X),Y\big)
-g\big( [\xi,\psi(X)] - \psi([\xi,X]),Y\big),
\end{eqnarray*}
so that 
\begin{equation}\label{psibtC}
\bt\circ\psi+\psi\circ\bt {-}\psi\circ C{ +}C\circ\psi=0.\end{equation}
The proof now continues with two lemmas.

\begin{lem}\label{psiinv-lem}
 The holonomy algebra $\hol_x(\nabla)$  does not preserve any proper $\psi_x$-invariant subspace of $D_x$.
\end{lem}

\begin{proof} 
%\begin{equation}\label{psibtC}
%\bt\circ\psi+\psi\circ\bt+\psi\circ C-C\circ\psi=0.\end{equation}

 Suppose that $\hol_x(\nabla^W)$ preserves a proper $\psi_x$-invariant subspace $D^1_x\subseteq D_x$. 
Then it preserves also the   $g$-orthogonal complement $\D^2_x$ to  $\D^1_x$ in $\D_x$, which is also   $\psi_x$-invariant.  Note that if $X_1\in \D^1_x$ and $X_2\in \D^2_x$, then 
\begin{equation}\label{dtheta12}
\d\theta(X_1,X_2)=-g(X_1,\psi_x^{-1}( X_2))=0.\end{equation}
 This implies that the restrictions of $\d\theta$ to $\D^1_x$ and $\D^2_x$ are non-degenerate.
 Therefore, with $m\ge 3$, we may assume that $\dim \D^1_x\geq 4$.

 Let $X_1,Y_1\in \D^1_x$, and $X_2,Y_2\in \D^2_x$.  Using the Bianchi identity 
 $$\underset{X Y Z}{\Su} R^{W}(X,Y)Z=\underset{X Y Z}{\Su} \d\theta(X,Y)(\bt+C) (Z)$$
 applied to $X=X_1$, $Y=Y_2$ and $Z=X_2$, equation~(\ref{dtheta12}),
 and the fact that $R^W$ preserves the $\hol_x(\nabla^W)$-invariant subspaces,  we get
$$g(R^W(X_1,Y_1)X_2,Y_2)=d\theta(X_1,Y_1)g(\bt X_2+C X_2,Y_2).$$
The symmetrization over $X_2$ and $Y_2$ implies
$$\d\theta(X_1,Y_1)g(\bt X_2,Y_2)=0.$$
Since the restriction of $\d\theta$ to $\D^1_x$ is non-degenerate, we conclude that
$$g(\bt \D^2_x,\D^2_x)=0,$$
and similarly,
$$g(\bt \D^1_x,\D^1_x)=0.$$
Let $X_1,Y_1,Z_1\in \D^1_x$, and $X_2\in \D^2_x$. Then the above Bianchi identity yields
$$ 
\underset{X_1 Y_1 Z_1}{\Su}\d\theta(X_1,Y_1)g(\bt Z_1+C Z_1,X_2)=0.
$$
Since $\dim \D^1_x\geq 4$, for each $Z_1$, there exist $X_1,Y_1$ such that $$\d\theta(X_1,Y_1)\neq 0,\quad \d\theta(X_1,Z_1)=d\theta(Y_1,Z_1)=0.$$
We conclude that $$g(\bt Z_1+C Z_1,X_2)=0,\quad \text{for all } Z_1\in \D^1_x,\,X_2\in \D^2_x.$$
Since $\psi_x$ is an isomorphism, we can use  \eqref{psibtC} to get
$$g(\psi \bt Z_1-\psi C Z_1,X_2)=0,\quad \text{for all } Z_1\in \D^1_x,\,X_2\in \D^2_x,$$
so that
$$g( \bt Z_1- C Z_1,X_2)=0,\quad \text{for all } Z_1\in \D^1_x,\,X_2\in \D^2_x.$$
We conclude that $\bt=0$, which gives a contradiction. This proves the lemma. \end{proof}

\begin{lem}\label{Jlem}
The skew isomorphism  $\psi$ defined by \eqref{defpsi} is, after the multiplication of $g$ by a constant, a $\nabla$-parallel complex structure.
\end{lem}
\begin{proof}
Since $\nabla\d\theta=0$, we have that $\nabla\psi=0$ and hence $\hol_x(\nabla) $ commutes with $\psi$.

If $\hol_x(\nabla)$ is  irreducible, then by Schur's lemma the symmetric isomorphism $\psi_x$ satisfies that $\psi^2_x=\lambda\id_{\D_x}$ for $\lambda\in \Real$. Since $\psi$ is skew-symmetric, $\lambda=-\mu^2$, $\mu>0$, so that $J:=\tfrac{1}{\mu} \psi$ is a $\nabla$-parallel complex structure. 

If $\hol_x(\nabla)$ is not irreducible, then  $\hol_x(\nabla)$ preserves  a non-trivial proper subspace $\D^1_x$  of $ \D_x$ and  also the orthogonal complement $(\D^1_x)^\bot\subseteq \D_x$. We may assume that $\dim \D^1_x\geq m$. Since $\psi$  is $\nabla$-parallel,  
$\hol_x(\nabla)$ preserves also the intersection $\psi \D^1_x\cap \D^1_x$, which is $\psi$-invariant. Lemma~\ref{psiinv-lem} then applies and we get that $\psi \D^1_x\cap \D^1_x=\{0\}$, and $\dim \D^1_x= m$. Consequently,  $$\D_x=\D^1_x\oplus \psi \D^1_x$$  is a $g$-orthogonal decomposition preserved by $\hol_x(\nabla)$. 
Since this holds for any invariant subspace, the representation of $\hol_x(\nabla)$ on $\D^1_x$ is irreducible.
Moreover, the $g$-symmetric isomorphism $$\psi^2|_{\D^1_x}:\D^1_x\to \D^1_x$$ commutes with $\hol_x(\nabla)$, so that by Schur's lemma $$\psi^2|_{D^1_x}=\lambda\id_{\D^1_x},$$
for $\lambda\in \Real$.
Again, since $\psi$ is skew-symmetric, $\lambda=-\mu^2$, $\mu>0$.
We conclude that $$J_x=\frac{1}{\mu}\psi_x$$ is a  complex structure commuting with  $\hol_x(\nabla)$
\end{proof}

For the proof of the theorem, it remains to show that the obtained complex structure $J$ is not only parallel for the Schouten connection $\nabla$ and the Wagner connection $\nabla^W$, but also for the adapted connection $\nabla^\bt$. 
%suppose that $\hol_x(\nabla^\bt)$, and hence $\hol_x(\nabla)$ preserves a proper subspace $\D^1_x\subseteq \D_x$. Then from the proof of Lemma~\ref{Jlem} we get that, up to multiplication of $g$ by a constant, $\psi =J$ is a $\nabla$-parallel complex structure and   $$\D_x=\D^1_x\oplus J \D^1_x$$  is a $g$-orthogonal decomposition preserved by $\hol_x(\nabla)$ such that $\D^1_x$ is irreducible for  $\hol_x(\nabla)$.
%
%...
%e conclude that $$J_x=\frac{1}{\mu}\psi_x$$ is a  complex structure commuting with  $\hol_x(\nabla)$ and 
%$\hol_x(\nabla^\bt)$ \tcom{Why with $\hol_x(\nabla^\bt)$?} , which implies that the space 
%$(M,\theta,g)$ is pseudo-Hermitian, and $\hol_x(\nabla)=\hol_x(\nabla^\bt)=\so(m)\subseteq\su(m)$, and by ...
%\end{proof}
%
%\begin{theorem} Let $(M,\theta,g)$ be a contact sub-Riemannian space with $\dim M=2m+1\geq 7$. Suppose that $\bt\neq 0$ and $$\nabla\d\theta =0.$$ If   $\hol(\nabla^\bt)$ is  irreducible, then
%	$(M,\theta,g)$ is pseudo-Hermitian after multiplication of $g$ by a constant. \end{theorem}
%
%\begin{proof}Consider $\psi$ defined by \eqref{defpsi}, which satisfies   $\nabla \psi=0$. By Lemma~\ref{Jlem}, after the multiplication of $g$ by a constant, we may assume that $\psi=J$ is a $\nabla$-parallel complex structure.
For this we note that  $$\bt\circ J+J\circ\bt\in\u(\D).$$
Indeed, since $J\in\so(\D)$ and $\bt$ is symmetric, it holds  $\bt\circ J+J\circ\bt\in\so(\D)$, and moreover$$J\circ(\bt\circ J+J\circ\bt)=
J\circ\bt\circ J-\bt=(\bt\circ J+J\circ\bt)\circ J.$$ 
For the endomorphism $N^W=\bt+C$ defining the Wagner connection, with $C\in \so(\D)$, we split  $C=C_1+C_2$, where $C_1\in\u(\D)$ and $C_2$ belongs to the orthogonal complement  $\u(\D)^\bot$ to $\u(\D)$ in $\so(\D)$. 
It is clear that $J$ preserves $\u(\D)^\bot$. With this  and with $\bt\circ J+J\circ\bt\in\u(\D)$, equation~\eqref{psibtC} implies that $[J,C_2]=0$, so that that $C_2=0$. Now with $C\in\u(\D)$, we get $$\nabla^\bt_\xi J=\nabla^W_\xi J-[C,J]=0.$$ Thus, $(M,\theta,g)$ is pseudo-Hermitian.
\end{proof}

%\subsection{Case $\bt= 0$}

\section{Classification of sub-symmetric spaces in terms of holonomy}
\label{subsymholclass-sec}

In this section we will deduce the classification of simply connected sub-symmetric spaces from the classification of the holonomy algebras. In particular, we will show the following result.
\begin{theorem}\label{subsymholclass-thm}If $(M,g,\theta)$ is a simply-connected sub-Riemannian symmetric space of dimension~$\ge 7$, then $M$ is given by one of the entries in Table~1 with the corresponding holonomy algebras as listed. 
In the table, HRSS stands for Hermitian Riemannian symmetric space, and $\t$ is as in Theorem~\ref{thholnablaKcontact}. 

\begin{table}[h]%\label{TabSubSym}
	\caption{Sub-symmetric spaces $M$ and their holonomy for $\dim M\geq 7$}
	\begin{tabular}{|c|c|c|c|}\hline
		$\bt$ & Sub-symmetric space & $\hol(\nabla)$ & $\hol(\nabla^\bt)$\\ \hline
		$\bt=0$ & $H^{2m+1}=$  Heisenberg group of dimension $2m+1$ & $\{0\}$ & $\{0\}$\\\cline{2-4}
		&  $\mathbb{S}^1$-bundle over HRSS $N$ without flat factor& $\hol(N)/\t$ & $\hol(N)$\\ \cline{2-4}
		& twisted product of $H^{2m+1}$ and $\mathbb{S}^1$-fibration over HRSS $N$ & $\hol(N)$ & $\hol(N)$\\ \hline
		$\bt\neq 0$ & $\SO(m+2)/\SO(m)$, $\SO(2,m)/\SO(m)$  & $\so(m)$ & $\so(m)\oplus\u(1)$\\
		&  $\SO(1,m+1)/\SO(m)$ with $scal^\bt\neq 0$ &  & \\
		& $\SO(m+1)\ltimes\Real^{m+1} /\SO(m)$, $\SO(1,m)\ltimes\Real^{1,m} /\SO(m)$  &  & \\ \cline{2-4}
		&  $\SO(1,m+1)/\SO(m)$ with $scal^\bt= 0$ & $\so(m)$ & $\so(m)$ \\ \hline 
	\end{tabular}
\end{table}
\end{theorem}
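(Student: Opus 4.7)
The approach is to combine the holonomy classifications of Section~\ref{SecClass} (for $\bt\neq 0$) and Section~\ref{secbt0} (for $\bt = 0$) with the algebraic description via sub-symmetric quadruples $(\g,s,\k,B)$ from Section~\ref{subsym-sec1}. Formula~\eqref{symhol} gives $\hol(\nabla) = \ad_\k$ and $\hol(\nabla^\bt) = \ad_\k\oplus\langle A_\xi\rangle$, where $A_\xi$ is the skew part of $\ad_\xi$. Hence the holonomy pair already determines $\k$ and $A_\xi$; what remains to pin down is the symmetric part $\bt$ of $\ad_\xi$ together with the bracket $[\p,\p]\subseteq\h$, both $\k$-invariant and controlled by $R^W$ via~\eqref{Liebracket}. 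I would split by whether $\bt = 0$ and enumerate the quadruples in each case, then identify the resulting simply connected homogeneous spaces.

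For $\bt\neq 0$, Proposition~\ref{PropSym} forces $\hol(\nabla) = \so(m)$, and Theorem~\ref{ThClass} restricts $\hol(\nabla^\bt)$ to $\so(m)$ or $\so(m)\oplus\u(1)$. Thus $\k = \so(m)$ sits diagonally through a Lagrangian splitting $\p = E\oplus JE$; Lemma~\ref{LetEbt} forces $\bt = \lambda\,\mathrm{diag}(\mathbf{1},-\mathbf{1})$ for a constant $\lambda>0$ and $A_\xi$ to be a multiple of $J$. The $\so(m)$-invariant algebraic curvature tensors on $\p$ compatible with~\eqref{BianchiTWnew} form a one-parameter family, so $R^W$ on $\Lambda^2\p$ is determined by a single real parameter $\kappa$. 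Running through the signs of $\kappa$ and of the Webster scalar, and verifying the Jacobi identity via~\eqref{Liebracket}, produces the Lie algebras $\so(m+2)$, $\so(2,m)$, $\so(1,m+1)$ with $scal^\bt\neq 0$, $\so(m+1)\ltimes\Real^{m+1}$ and $\so(1,m)\ltimes\Real^{1,m}$ when $\hol(\nabla^\bt) = \so(m)\oplus\u(1)$, and the single algebra $\so(1,m+1)$ with $scal^\bt = 0$ when $\hol(\nabla^\bt) = \so(m)$. Matching these with the simply connected isometry groups modulo the isotropy then yields the bottom rows of Table~1.

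For $\bt = 0$ the argument uses Theorems~\ref{thCriter} and~\ref{thholnablaKcontact}. If $\hol(\nabla^\bt) = 0$, then $R^W = 0$ on $\Lambda^2\D$ and the only simply connected model is the Heisenberg group $H^{2m+1}$. Otherwise, by the dichotomy~\eqref{dicho} either $\hol(\nabla) = \hol(\nabla^\bt)$ or $\hol(\nabla)$ is a codimension-one ideal; in the latter case Theorem~\ref{thCriter} provides a $\nabla^\bt$-parallel complex structure, trivial $\D^0$, and~\eqref{conddtheta=}, and Berger's list then identifies each irreducible factor of $\D$ as the tangent representation of a Hermitian Riemannian symmetric space, so $M$ is a circle bundle over such a HRSS $N$ with no flat factor and $\d\theta$ proportional to the K\"ahler form. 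When $\hol(\nabla) = \hol(\nabla^\bt)$ and $\D^0\neq 0$, the flat factor contributes a Heisenberg component and $M$ is the twisted product in the third row of Table~1. The main obstacle is, in the $\bt\neq 0$ case, the sign-branch analysis that identifies each Lie algebra in the list uniquely together with the uniform verification of the Jacobi identity across the branches; the K-contact case reduces more directly to Berger's list combined with the circle-bundle construction.
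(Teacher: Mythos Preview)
Your overall strategy matches the paper's closely, and the case split is the right one, but two points need tightening before the argument goes through.

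First, in the $\bt\neq 0$ case you invoke Proposition~\ref{PropSym}, Theorem~\ref{ThClass}, and Lemma~\ref{LetEbt} directly, but all of these are stated for \emph{pseudo-Hermitian} manifolds. A sub-symmetric contact sub-Riemannian space is not a priori pseudo-Hermitian; the paper first observes that sub-symmetry gives $\nabla\d\theta=0$ (this is part of~\eqref{locsympart}) and then applies Theorem~\ref{dtheta0theo} to conclude that, after a homothety, the structure is pseudo-Hermitian. Without this reduction the pseudo-Hermitian machinery is simply unavailable. Once it is in place, the paper proceeds exactly as you sketch: it uses that $\R(\so(m)\oplus\u(1))$ is one-dimensional to write $R_0=\mu R_1$, computes $C=\mu J$ so that $\ad_\xi=\bt+C$, and then case-splits. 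Note, however, that there are genuinely \emph{two} parameters, the eigenvalue $\lambda>0$ of $\bt$ and the curvature parameter $\mu$ (equivalently $scal^\bt=2\mu m^2$); the five branches are $\mu=0$, $\mu=\pm\lambda$, $0\neq|\mu|<\lambda$, and $|\mu|>\lambda$, not just a sign dichotomy on a single $\kappa$.

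Second, in the $\bt=0$ case your appeal to ``Berger's list'' to identify each irreducible factor as a Hermitian symmetric space is not the correct mechanism. Berger's theorem would only tell you that each $\h_i$ is either one of the special holonomies or that the factor is locally symmetric. What actually forces the factors to be Hermitian symmetric is the sub-symmetry itself: $\nabla R^\bt=0$ means $\hol_x(\nabla^\bt)$ annihilates $R^\bt_x$, hence each $\h_i$ annihilates its piece $R_i\in\R(\h_i)$, and the pair $(\h_i,R_i)$ with $\h_i\subseteq\u(\D^i_x)$ then defines a K\"ahler symmetric space directly via the standard Lie-triple construction. The paper's argument here does not pass through Berger's list at all.
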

We should point out that the table in parts can be derived from \cite{BFG,FG,FGR}. More explicitly,  \cite[Theorem 5.1]{FG} give the classification in the case when $\h$ acts irreducibly on $\p$. Then \cite[Theorem 5.1]{FGR} provides the relation between the horizontal and the adapted holonomy as in~(\ref{symhol}) and based on this  the classification is extended to the case that the adapted holonomy is irreducible \cite[Theorem~6.1]{FGR}. Finally, in  \cite{BFG} also the reducible case is considered and a table similar to ours is provided (assuming that the authors of \cite{BFG}  refer to the holonomy of the adapted connection in their table). We want to stress that
  our proof does not use any of these results, but rather derives the table from the holonomy classification in our Theorem~\ref{ThClass}. Moreover, our table also gives the relation between horizontal and adapted holonomy, which is necessary for a complete classification.  Finally we would like to point out that in the case $\bt\neq 0$, while $\D$ is uniquely determined, there is a $2$-parameter family of metrics on $\D$, see  \cite[Theorem~6.1]{FGR}. This will also be visible from our proof below. 
  
%
%
%{\bf Thomas,
%
%$\bullet$ yes, please, write explicitly  that we do not use results from \cite{BFG,FG,FGR}. We use classification of holonomy algebras and information about the curvature that we obtained above. Probably we may state that this simplifies the proof.
%
%$\bullet$ In \cite[Theorem 5.1, page 332]{FGR} they have a general statement about horizontal and adapted holonomy, we have it stated as (44). The 3rd and 4th columns of the paper give more explicit information.
%In \cite[Page 80]{BFG}, they have a table, where by "holonomy" they probably mean the adapted holonomy. I was inspired by that table, but in our table, I wanted to stress that the comparison of horizontal and adapted holonomy is important. Especially, when $\bt=0$, I like that the both holonomies together classify sub-symmetric spaces.
%
%$\bullet$ this is true that if $\bt\neq 0$, then the space depends on two parameters: $\mu$ (the scalar curvature) and $\lambda$ (the characterization of $\bt$), but on pages 334--338 in \cite{FGR} they considered (almost) the same parametrs $b$ and $\bt_0$, and in Theorem 6.1 they do not mention the parametrs, similarly, in the table from \cite{BFG} they do not speak about the parameters. So, I think that we do not need to add something more.
%}

\medskip

For the proof we proceed in several steps and 
first we recall some facts.

\subsection{Sub-symmetric spaces as $\mathbb{S}^1$-bundles over HRSS} 
Let $N=G/H$ be a HRSS of dimension $2m$. Then $H=K\cdot \mathbb{S}^1$, $K\subseteq\SU(m)$, $\mathbb{S}^1=\Un(1)$. Let $M$  be the bundle of units of the canonical complex line bundle over $N$, which is a $\mathbb{S}^1$-bundle over $N$.
 The Lie group $G$ acts transitively on $M$ with the stability subgroup $K$, i.e., $M= G/K$. Let $\theta$ be the connection 1-form on
 the bundle $M\to N$. Then $\theta$ is a contact form on $M$. The Riemannian metric on $N$ induces a sub-Riemannian metric on  $\D=\ker(\theta)$. 
 The circle fibres  correspond to the flow of the Reeb vector field $\xi$. 
Moreover, the distribution $\D=\ker(\theta)$ is $G$-invariant and the $\mathrm{Ad}_K$-invariant complement to the fibres.

\subsection{$\mathbb{Z}_2$-gradings of pseudo-orthogonal Lie algebras}
		 Let $\mathfrak{b}$ be one of the Lie algebras
	$\so(m+2)$ and $\so(2,m)$.
	Let $\epsilon=+$ and $\epsilon=-$ for the Lie algebras $\so(m+2)$ and $\so(2,m)$, respectively. Let $f_1,f_2,e_1,\dots,e_m$ be an orthonormal basis of the space $\Real^{m+2}$ (if $\epsilon=+$) or of the space $\Real^{2,m}$ (if $\epsilon=-$). Then there is a Lie algebra $\mathbb{Z}_2$-grading 
	$$\mathfrak{b}=(\so(m)\oplus\u(1))\oplus\Real^{2m},$$
	$$\Real^{2m}=\Real^m\otimes \Real^2,\quad \Real^m=\mathrm{span}\{e_1,\dots,e_m\},\quad 
	\Real^2=\mathrm{span}\{f_1,f_2\}$$ and it holds
	$$[X\otimes f_\alpha, Y\otimes f_\beta]_{\h}=\epsilon\delta_{\alpha\beta} X\wedge Y+g(X,Y) f_\alpha\wedge f_\beta,\quad \alpha,\beta=1,2.$$
	Note that with respect to the decomposition $\Real^{2m}=\Real^m\otimes f_1\oplus \Real^m\otimes f_2$ it holds
	$$f_1\wedge f_2=\epsilon \left( \begin{matrix} 0& -\1 \\ \1&0 \end{matrix}\right).$$
	
	Let $\mathfrak{b}$ be one of the Lie algebras
	$\so(m+1)$ and $\so(1,m)$.
	Let $\epsilon=+$ and $\epsilon=-$ for the Lie algebras $\so(m+1)$ and $\so(1,m)$, respectively. Let $e_\epsilon,e_1,\dots,e_n$ be an orthonormal basis of the space $\Real^{m+1}$ (if $\epsilon=+$) or of the space $\Real^{1,m}$ (if $\epsilon=-$). Then there is a Lie algebra $\mathbb{Z}_2$-grading
	$$\mathfrak{b}=\so(m)\oplus\Real^{m},\quad 
	\Real^{m}= \mathrm{span}\{e_1,\dots,e_m\}\otimes e_\epsilon$$ and it holds
	$$[X\otimes e_\epsilon, Y\otimes e_\epsilon]_{\h}=\epsilon X\wedge Y.$$
	Let $p,e_1,...,e_m,q$ be a Witt basis of the Minkowski space $\Real^{1,m+1}$. 
	It defines the $\mathbb{Z}_2$-grading
	$$\so(1,m+1)=(\so(m)\oplus\Real p\wedge q)\oplus(p\wedge \Real^m\, \oplus\, q\wedge \Real^m).$$ It holds that $$[p\wedge X, q\wedge Y]=X\wedge Y-g(X,Y) q\wedge p, \quad [p\wedge X, p\wedge Y]=[q\wedge X, q\wedge Y]=0.$$

	\subsection{Classification (proof of Theorem~\ref{subsymholclass-thm})}

Let $(M,\theta,g)$ be a simply-connected sub-symmetric contact sub-Riemannian space with $\dim M=2m+1\geq 7$.
We will consider two cases.

\subsubsection{Case $\bt=0$} Suppose that $\hol(\nabla^\bt)=\{0\}$.  Using the information from Section \ref{secLocsubsym}, we get 
$$\g=\p\oplus\left<\xi\right>$$ with 
$$[X,Y]=\Theta(X,Y),\quad [\xi,X]=0,\quad \text{for all } X,Y\in\p.$$ We see that $\g$ is the Heisenberg algebra, and $(M,\theta,g)$ is the Heisenberg group
 $H^{2m+1}$. 
 
 Suppose that $\hol(\nabla^\bt)\neq \{0\}$.
Since $(M,\theta,g)$ is sub-symmetric, it holds $\nabla\d\theta =0$. From this and the equality $\bt=0$ it follows that $\nabla^\bt(\d\theta)=0$. We conclude that in decomposition \eqref{deRD} and \eqref{deRh}, each Lie algebra $\h_i$ is contained in $\u(D^i_x)$, and $\hol(\nabla^\bt)\subseteq\u(\D_x)$ with respect to a complex structure $J_x$. It holds $R^\bt_x\in\R(\hol(\nabla^\bt))$. The decomposition \eqref{deRD} defines the decomposition
$$R^\bt_x=R_1+\cdots+R_r,$$
where, for each $i$, $R_i\in\R(\h_i)$.
Since $\hol(\nabla^\bt)$ annihilates $R_x$, each $\h_i$ annihilates $R_i$. We see that the pair $(\h_i,R_i)$ defines a simply-connected Hermitian (K\"ahler) symmetric space $$N_i=F_i/H_i,$$
where $F_i$ is the simply-connected Lie group
with the $\mathbb{Z}_2$-graded Lie algebra 
$$\h_i\oplus \D^i_x,\quad \text{ where $R_i(\D^i_x,\D^i_x)=\mathrm{span}\{ R_i(X,Y)\mid X,Y\in \D^i_x\}$, }$$ 
and $H_i\subseteq F_i$ is the connected Lie subgroup corresponding to $\h_i$.  
The restriction  of the Ricci-form  $\rho$ of $R^\bt$ to $\D^i_x$ coincides with the Ricci-form $\rho^i$, which satisfies 
$$\rho^i=c_i \,\d\theta|_{\D^i_x},\quad \text{ with a constant }c_i\neq 0.$$  
According to Theorem \ref{thCriter}, it holds 
$\hol(\nabla^\bt)=\hol(\nabla)$ if and only if the space $\D_x^0$ is trivial. 

If $\D_x^0=0$, then $\p=\D_x$, $\k=\hol_x(\nabla)$, $\h=\hol_x(\nabla^\bt)=\k\oplus\t_x$,
$$[X,Y]=-R^W_x(X,Y)+\d\theta(X,Y)\xi,\quad \text{for all } X,Y\in \D_x,$$
as in~(\ref{Liebracket}), 
and we see that  $(M,\theta,g)$ is an $\mathbb{S}^1$-bundle over 
the product $$N=N_1\times\cdots\times N_r.$$

If $\D_x^0\neq 0$, then $\k=\hol_x(\nabla)=\hol_x(\nabla^\bt),$
$\h=\k\oplus\Real\xi$, 
$$[X,Y]=d\theta(X,Y)\xi,\quad \text{for all } X,Y\in \D^0_x,$$
$$[X,Y]=-R^W_x(X,Y)+d\theta(X,Y)\xi,\quad \text{for all } X,Y\in \D^i_x,\quad i\geq 1.$$
And we obtain the twisted product of a Heisenberg group with $N$.

\subsubsection{Case $\bt\neq 0$} 
From the results of Section \ref{secparaldtheta} it follows that (after a homothetic change of $g$) $(M,\theta,g)$ is a pseudo-Hermitian space. 
From Proposition \ref{PropSym} it follows that $\hol(\nabla)=\so(m)$.
Fix a point $x\in M$. Let, as in Section \ref{secLocsubsym}, $\p=\D_x$. In what follows we will consider all tensors to be defined at the point $x$.
As in the proof of Theorem \ref{ThClass}, we get
$$R^\bt=R_0+R_\bt,$$ where
$R_0\in\R(\so(m)\oplus\u(1))$, and
$$\bt=\lambda\left( \begin{matrix} \1& 0\\0&-\1\end{matrix}\right),\quad\lambda > 0,$$
with respect to an orthonormal basis $e_1,\dots,e_m,Je_1,\dots, Je_m$ of $\p$.
The space $\R(\so(m)\oplus\u(1))$ is one-dimensional \cite{berger55}  and it is spanned by the curvature tensor $R_1$ of the Riemannian symmetric space $\SO(m+2)/(\SO(m)\cdot\SO(2))$. Consider the above $\mathbb{Z}_2$-grading of $\so(m+2)$. We identify $\p$ with $\Real^{2m}$ in such a way that
$$J(X\otimes f_1)= X\otimes f_2,\quad J(X\otimes f_2)= -X\otimes f_1,\quad \text{ for all }X\in \Real^m.$$
In particular, $$J=f_1\wedge f_2\in\so(m).$$
Under this identification,
$$R_1(X\otimes f_\alpha, Y\otimes f_\beta)=-[X\otimes f_\alpha, Y\otimes f_\beta]_{\so(m+2)}=-\delta_{\alpha\beta} X\wedge Y-g(X,Y) f_\alpha\wedge f_\beta,\quad \alpha,\beta=1,2.$$ Let $\mu\in\Real$ be such that 
$$R_0=\mu R_1.$$
As in Section \ref{secLocsubsym}, we have the 
$\mathbb{Z}_2$-grading
$$\g=(\so(m)\oplus\left<\xi\right>)\oplus \p,\quad \p=\Real^{2m}.$$
Since $\bt\neq 0$, we have $\ad_\xi =N^W=\bt+C\not=0$,  so we identify $\xi$ with $\bt+C$.
Thus it holds %\acom{In \cite{FGR} they have different sign for $\ad_\xi$!}
\begin{equation}\label{bracketgR}
\begin{array}{rcl}
[X\otimes f_\alpha, Y\otimes f_\beta]_\g&=&-R^W(X\otimes f_\alpha, Y\otimes f_\beta)+d\theta(X\otimes f_\alpha, Y\otimes f_\beta)(\bt +C)\\[1mm]
&=&-R(X\otimes f_\alpha, Y\otimes f_\beta),\end{array}\end{equation}
as well as
\begin{equation}\label{RXfaYfa}R(X\otimes f_\alpha, Y\otimes f_\alpha)=R^\bt(X\otimes f_\alpha, Y\otimes f_\alpha)=-\mu X\wedge Y,\quad \alpha=1,2,\end{equation}
and 
\begin{eqnarray*}%\label{RXf1Yf2}
R(X\otimes f_1, Y\otimes f_2)&=&R^\bt(X\otimes f_1, Y\otimes f_2)-d\theta(X\otimes f_1, Y\otimes f_2)\bt\\
&=&-\mu g(X,Y)f_1\wedge f_2+\lambda X\wedge Y+-g(X,Y)\bt.\end{eqnarray*}
Next, $$C=-\frac{1}{2m}R^\bt(J)=-\frac{1}{2m}R_0(J) =\mu f_1\wedge f_2=\mu J.$$
This implies
\begin{equation}\label{RXf1Yf2}R(X\otimes f_1, Y\otimes f_2)=\lambda X\wedge Y-g(X,Y)(\bt+C).\end{equation}
Note that
$$scal^\bt=\mu\cdot scal(R_1)=2\mu m^2.$$ 
We see that
$$\ad_\xi=\bt+C=\left(\begin{matrix} \lambda\1& -\mu \1\\\mu \1& -\lambda \1 \end{matrix}\right).$$
Recall that $\lambda>0$. Similarly as in \cite{FGR} we consider several cases.

\bigskip

{\bf Case $\mu=0$.} 
The equalities \eqref{bracketgR}, \eqref{RXfaYfa}, and \eqref{RXf1Yf2}   show that the map $$X\otimes f_1\mapsto \sqrt{\lambda}\ p\wedge X,\quad  X\otimes f_2\mapsto \sqrt{\lambda} \ q\wedge X$$
defines the isomorphism $\g\cong\so(1,m+1)$. Thus,
$M=G/K$ is the space $\SO(1,m+1)/\SO(m)$. 

\bigskip

{\bf Case $\mu=\lambda$.} It holds
$$\ker (\bt+C)=\{X\otimes f_1+X\otimes f_2\mid X\in\Real^m\}.$$ Let  $\Real^{m+1}:=\ker (\bt+C)\oplus\left<\xi\right>\subseteq\g$. From \eqref{RXfaYfa} and \eqref{RXf1Yf2} it follows that $\Real^{m+1}\subseteq \g$ is a commutative ideal, and 
$$[X\otimes f_1-X\otimes f_2,Y\otimes f_1-Y\otimes f_2]_\g=4\mu X\wedge Y.$$
This implies that 
$$\g\cong \g_1\ltimes \Real^{m+1},\quad \g_1\cong\so(m+1),$$ where
$$\g_1=\so(m)\oplus L_1,\quad L_1=\{X\otimes f_1-X\otimes f_2|X\in\Real^m\}.$$
This shows that 
$M=G/K$ is the space $\SO(m+1)\ltimes\Real^{m+1}/\SO(m)$. 

\bigskip

{\bf Case $\mu=-\lambda$.} This case is similar to the previous one. It holds that
$$\g\cong \so(1,m)\ltimes\Real^{1,m},$$
where 
$$\Real^{1,m}=\ker (\bt+C)\oplus\left<\xi\right>,\quad 
\ker (\bt+C)=\{X\otimes f_1-X\otimes f_2|X\in\Real^m\}.$$
We see that 
$M=G/K$ is the space $\SO(1,m)\ltimes\Real^{1,m}/\SO(m)$. 

\bigskip

{\bf Case $0\neq |\mu|<|\lambda|$.} In that case, $\bt+C$ has real eigenvalues $\pm\sqrt{\lambda^2-\mu^2}$, and 
we obtain the decomposition 
$$\Real^{2m}=L_1\oplus L_2$$ into the direct sum eigenspaces, where
$$L_{1,2}=\left.\left\{ \varphi_{1,2}(X):=X\otimes f_1+\frac{\lambda\mp \sqrt{\lambda^2-\mu^2}}{\mu}X\otimes f_2\right| X\in\Real^m   \right\}.$$
It holds $$[L_\alpha,L_\alpha]=0,\quad \alpha=1,2,$$
$$R(\varphi_1(X),\varphi_2(X))=\frac{2}{\mu}(\lambda^2-\mu^2)\left(X\wedge Y-\frac{1}{\sqrt{\lambda^2-\mu^2}}g(X,Y)(\bt+C)\right).$$
With respect to the basis $\varphi_1(e_1),\dots,\varphi_1(e_n),\varphi_2(e_1),\dots,\varphi_2(e_n)$ of  $\Real^{2m}$ it holds
$$\frac{1}{\sqrt{\lambda^2-\mu^2}}(\bt+C)=\left(\begin{matrix}\1&0\\0&-\1\end{matrix}\right),$$
and we see that $$\g=(\so(m)\oplus\left<\xi\right>)\oplus (L_1\oplus L_2)$$ is isomorphic to $\so(1,m+1)$.
We see that 
$M=G/K$ is the space $\SO(1,m+1)/\SO(m)$. 

\bigskip

{\bf Case $|\mu|>\lambda$.}
Note that $$\frac{1}{\mu^2-\lambda^2}(\bt+C)^2=-\id_{\Real^{2m}}.$$
Let $$L_{1}=\left.\left\{ \varphi_{1}(X):=X\otimes f_1\right| X\in\Real^m   \right\},$$
$$L_{2}=\left.\left\{ \varphi_{2}(X):=\frac{1}{\sqrt{\mu^2-\lambda^2}}(\bt+C)\varphi_1(X)=\frac{1}{\sqrt{\mu^2-\lambda^2}}(\lambda X\otimes f_1+\mu X\otimes f_2)\right| X\in\Real^m   \right\}.$$
It holds 
$$R(\varphi_\alpha(X),\varphi_\alpha(X))=-\mu\frac{1}{\sqrt{\mu^2-\lambda^2}} X\wedge Y,\quad \alpha =1,2,\,\,X\in\Real^m,$$
$$R(\varphi_1(X),\varphi_2(X))=-\mu\frac{1}{\sqrt{\mu^2-\lambda^2}}g(X,Y)(\bt+C).$$
This shows that if $\mu>0$, then $\g$ is isomorphic to $\so(m+2)$, and if 
$\mu<0$, then $\g$ is isomorphic to $\so(2,m)$

\section{Parallel horizontal spinors}\label{secSpinors}

Let $(M,J,\theta)$ be a simply connected pseudo-Hermitian space of dimension $n=2m+1$ with contact distribution $\mathcal{D}$. 
The form $d\theta^m$ defines an orientation on $\D$. 
Let $P_{\SO}$ be the $\SO(2m)$-principle bundle of positively oriented frames on $D$.  
A {\it spin-structure} on $\D$ is a  reduction of $P_{\SO}$ to a $\Spin(2m)$-principle bundle $P_{\Spin}$ 
corresponding to the two-fold covering $$\lambda:\Spin(2m)\to\SO(2m).$$
Suppose that a spin-structure on $\D$ is fixed.
Denote by $\Delta_{2m}$ the complex 
$\Spin(2m)$-spinor module. {\it The spinor bundle} $S$  is defined as the associated bundle,
$$S=P_{\Spin}\times_\lambda \Delta_{2m}.$$
The horizontal connection $\nabla$ is lifted to a horizontal  spinor derivative
$$\nabla^S:\Gamma(\D)\times\Gamma(S)\to\Gamma(S).$$
The holonomy algebra of this connection is the representations of $\hol_x(\nabla)$ 
on $S_x\cong\Delta_{2m}$. By the holonomy principle, there exists a non-zero spinor 
$s\in \Gamma(S)$ such that $\nabla^S s=0$ if and only if $\hol_x(\nabla)$  preserves a non-zero element $s_x\in S_x\cong\Delta_{2m}$.

Parallel horizontal spinors and their relation to holonomy on pseudo-Hermitian manifolds and on K-contact sub-Riemannian manifolds 
were studied in \cite{Leitner18} and \cite{GalHolK}. We use these results and the results of the current paper, 
and obtain the following cases.

\begin{theorem}\label{thparalspin} Let $(M,J,\theta)$ be a simply connected pseudo-Hermitian space of dimension $n=2m+1\geq 7$ 
	with a fixed spin structure on the contact distribution $\D$. 
	Then $(M,J,\theta)$ admits a non-zero parallel horizontal spinor if and only if one of the following conditions holds:
	\begin{itemize}
		\item[{\bf 1.}] $\bt\neq 0$ and $\hol(\nabla)$ is either $\su(m)$ or $\so(m)$; 
		\item[{\bf 2.}]  $\bt=0$, $\hol_x(\nabla)\neq\hol_x(\nabla^\bt)$, and there 
		exists a $\nabla$-parallel $g$-skew-symmetric complex structure $I$ on $\D$ such that $\hol_x(\nabla)\subseteq\su(\D_x,g_x,I_x)$; 
		\item[{\bf 3.}] $\bt=0$, $\hol_x(\nabla)=\hol_x(\nabla^\bt)$, and each 
		Lie algebra in decomposition \eqref{deRh} is one of $\su(k)$, or $\mathfrak{sp}(k)$.
	\end{itemize}
	In cases 1 and 2 the dimension of the space of parallel spinors is $2$.
\end{theorem}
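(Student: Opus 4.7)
The plan is to reduce the existence of a $\nabla^S$-parallel horizontal spinor to an algebraic question about fixed vectors of $\hol_x(\nabla)$ in $\Delta_{2m}$, and then to exhaust the cases given by the holonomy classifications established earlier in the paper. Since $M$ is simply-connected, the holonomy principle for $\nabla^S$ yields a canonical isomorphism between the space of parallel horizontal spinors and the fixed subspace of $\hol_x(\nabla)$ in $\Delta_{2m}$; it therefore suffices to compute this invariant subspace for each possible holonomy algebra and to verify the dimension claim.

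For $\bt\neq 0$ I would apply Theorem~\ref{ThClass}, which restricts $\hol(\nabla)$ to one of $\u(m)$, $\su(m)$, or $\so(m)$. Using the $\nabla^\bt$-parallel complex structure $J$ coming from the pseudo-Hermitian structure, I identify $\Delta_{2m}\cong \Lambda^{\bullet}(\Co^m)$ so that the central element $J\in \u(m)$ acts on $\Lambda^p(\Co^m)$ by the scalar $\mathrm{i}\bigl(p-\tfrac{m}{2}\bigr)$. This immediately rules out parallel spinors for $\hol(\nabla)=\u(m)$. For $\hol(\nabla)=\su(m)$ the fixed subspace is $\Lambda^0(\Co^m)\oplus \Lambda^m(\Co^m)$, of complex dimension two. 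For $\hol(\nabla)=\so(m)$, realised as the stabiliser of a Lagrangian $L\subset\D_x$, I would use the isomorphism of $\so(m)$-modules $\Co^m\cong L\otimes_\Real\Co$ to write $\Lambda^p(\Co^m)\cong \Lambda^p(L)\otimes_\Real\Co$, whence the $\so(m)$-fixed subspace is still $\Lambda^0(L)\otimes\Co\,\oplus\,\Lambda^m(L)\otimes\Co$, again of dimension two, since for $0<p<m$ the $\so(m)$-module $\Lambda^p(L)$ is irreducible and non-trivial.

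For the K-contact case $\bt=0$, I would distinguish two subcases using the results of Section~\ref{secbt0}. If $\hol(\nabla)\neq \hol(\nabla^\bt)$, Theorem~\ref{thCriter} supplies a $\nabla^\bt$-parallel complex structure $I$ on $\D$ with $\hol(\nabla^\bt)\subseteq\u(m)$; restriction gives that $I$ is $\nabla$-parallel with $\hol(\nabla)\subseteq\u(m)$, and the same spin representation analysis as above shows that a parallel horizontal spinor exists exactly when $\hol(\nabla)\subseteq \su(m)$, with a two-dimensional fixed subspace $\Lambda^0\oplus \Lambda^m$. If instead $\hol(\nabla)=\hol(\nabla^\bt)$, then by the first theorem of Section~\ref{secbt0} $\hol(\nabla)$ is a Riemannian holonomy algebra, and the decomposition~\eqref{deRh} together with the tensor-product structure of the spin representation reduces the problem to asking which irreducible Riemannian holonomy algebras $\h_i$ admit a parallel spinor in their half-spin representation; by Wang's classification these are $\su(k)$, $\sp(k)$, $G_2$ and $\spin(7)$.

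The hard part will be eliminating $G_2$ and $\spin(7)$ from this last list. The natural argument is that the holonomy-invariant orthogonal decomposition $\D_x=\D^0_x\oplus\D^1_x\oplus\cdots\oplus \D^r_x$ of~\eqref{deRD} forces each restriction $\d\theta|_{\D^i_x\times \D^j_x}$ to be an $(\h_i\oplus\h_j)$-equivariant pairing, which by Schur's lemma must vanish for $i\neq j$ (on inequivalent factors) and must be $\h_i$-invariant on $\D^i_x$; since $G_2\subset\so(7)$ and $\spin(7)\subset \so(8)$ preserve no non-trivial $2$-form, a factor $\h_i$ of either type would force $\d\theta|_{\D^i_x}=0$, contradicting the non-degeneracy of $\d\theta$ on $\D_x$. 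The key technical step underpinning this is to verify that $\d\theta|_\D$ is preserved by $\hol_x(\nabla)$ in the K-contact setting, which I expect to establish either by showing $\nabla \d\theta=0$ directly from the Koszul formula~\eqref{SchoutenCon} and the vanishing of $\bt$, or by an Ambrose--Singer-type argument based on the structure of the Schouten curvature tensor in~\eqref{Schouten2}.
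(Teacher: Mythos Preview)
Your overall architecture matches the paper's: reduce to the fixed-vector problem in $\Delta_{2m}$ and split according to $\bt\neq 0$ versus the two K-contact subcases. Your treatment of Case~1 is correct and in fact more explicit than the paper, which simply quotes Wang and \cite[Lemma~5]{GalHolK} for the $\so(m)$-invariants. Your handling of Case~3 is also correct: in the pseudo-Hermitian setting $\nabla J=0$ gives $\nabla\d\theta=0$ directly (this is stated in Section~\ref{secPsHerm}), and your Schur-type argument then forces $\d\theta$ to be block-diagonal and non-degenerate on each $\D^i_x$, so every irreducible factor $\h_i$ lies in a unitary algebra, excluding $G_2$ and $\spin(7)$. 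The paper leaves this exclusion implicit in its one-line citation of Wang, so your argument here is a genuine clarification.

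The gap is in Case~2. You write that ``the same spin representation analysis as above shows that a parallel horizontal spinor exists exactly when $\hol(\nabla)\subseteq\su(m)$''. But the analysis ``above'' only computed invariants for the three specific algebras $\u(m)$, $\su(m)$, $\so(m)$; it does not establish, for an arbitrary subalgebra of $\u(m)$, the equivalence between admitting a fixed spinor and lying in some $\su(\D_x,g_x,I_x)$. In Case~2 the holonomy has the form $\hol_x(\nabla)=\h'_1\oplus\cdots\oplus\h'_r\oplus\t_x^\perp$ of Theorem~\ref{thholnablaKcontact}, where each $\h_i=\h'_i\oplus\Real J^i_x$ is an irreducible Riemannian holonomy algebra containing the centre of $\u(\D^i_x)$, and $\t^\perp_x$ is a hyperplane in the torus $\mathrm{span}\{J^1_x,\dots,J^r_x\}$. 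Determining when this fixes a spinor, finding the correct complex structure $I$ (which need not be the pseudo-Hermitian $J$), and showing that the fixed space is \emph{exactly} two-dimensional, all require an explicit analysis of this decomposition against the tensor factorisation of $\Delta_{2m}$. The paper does not reproduce this computation either; it cites \cite[Theorem~8]{GalHolK}, and you should do the same or carry out that analysis rather than appeal to Case~1.
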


\begin{proof} 
It is well-known that the $\u(m)$-module $\Delta_{2m}$ admits the invariant decomposition 

\begin{equation}\label{Delta2n}\Delta_{2m}=\bigoplus_{k=0}^m\Lambda^k\Co^m\end{equation}
such that $\Co^m$ is the standard $\su(m)$-module, and the complex structure $J\in\u(m)$ acts on $\Lambda^k\Co^m$
as multiplication by $(m-2k)i$.

Suppose that $\bt\neq 0$. Consider the holonomy algebras $\hol_x(\nabla)$  from Theorem \ref{ThClass}. 
By the result of Wang \cite{Wang}, $\u(m)$ and $\so(m)\oplus\so(2)$ do not annihilate any non-zero element from $\Delta_{2m}$. 
From \cite[Lemma 5]{GalHolK} it follows that   the subspace of $\Delta_{2m}$   annihilated by $\so(m)$ is 
the sum $\Lambda^0\Co^m\oplus \Lambda^m\Co^m$ of the extremal bundles of the spinor representation. Hence, the dimension of parallel
spinors is 2. The same statement is true for $\su(m)$. Only the extremal spinors are annihilated. 

Suppose that $\bt=0$ and  $\hol_x(\nabla)\neq\hol_x(\nabla^\bt)$. The corresponding statements of the theorem follow  
from \cite[Theorem 8]{GalHolK}.

Suppose that $\bt=0$ and  $\hol_x(\nabla)=\hol_x(\nabla^\bt)$. In that case,  
$\hol_x(\nabla)$ is the holonomy algebra of  a Riemannian manifold, and the result follows from \cite{Wang}.
\end{proof}

% Felipe's remarks +++++++++++++++++++++++++++++++++++++++++++++++++++++++++

Note that any pseudo-Einstein space $(M,J,\theta)$ with or without torsion 
$\bt$ of dimension $n=2m+1$ satisfies $\hol_x(\nabla)\subseteq \su(2m)$.
Hence, any  simply connected pseudo-Einstein space $(M,J,\theta)$ admits 
non-zero parallel horizontal spinors in the extremal bundles
$\Lambda^0\Co^m\oplus \Lambda^m\Co^m$. It is well known that any strictly 
pseudoconvex CR manifold $(M^{2m+1},\mathcal{D},J)$,
which is embeddable into complex space $\Co^{m+1}$, admits a pseudo-Einstein 
structure $\theta$. E.g.~any smooth boundary $M^{2m+1}$ of a domain of holomorphy 
in  $\Co^{m+1}$ admits a pseudo-Einstein structure $\theta$ and non-zero parallel
horizontal spinors, no matter whether the torsion $\bt$ vanishes or not (cf. \cite{Leitner18}).

In the second case of Theorem~\ref{thparalspin} the pseudo-Hermitian 
spaces with parallel spinors  are not pseudo-Einstein (although the 
holonomy algebra $\hol_x(\nabla)$ is contained in the special unitary algebra
$\su(\D_x,g_x,I_x)$ for some complex structure $I\neq J$).
An explicit  construction of such a pseudo-Hermitian space $(M,J,\theta)$                  
is given for  $m$ even in Section~7.2.~of \cite{Habib}. This construction
multiplies two Einstein-Sasakian spaces, both of  dimension $m+1$,
of Riemannian and Lorentzian signature, resp., to produce the  Fefferman 
space of some underlying CR manifold $(M^{2m+1},\mathcal{D},J)$ with 
pseudo-Hermitian structure $\theta$. The CR structure $(\mathcal{D},J)$  
decomposes by construction into $(\mathcal{D}_1,J_1)\oplus(\mathcal{D}_2,J_2)$, and
the holonomy algebra $\hol_x(\nabla)$ is isomorphic to 
$\su(\frac{m}{2})\oplus\su(\frac{m}{2})\oplus \Real J$, which is only contained
in $\su(D_x,g_x,I_x)$ for the mutation $I=J_1-J_2$. Some twistor spinors on the 
Fefferman space (which are related to the Killing spinors
of the Einstein-Sasakian spaces) descend to horizontal parallel spinors on $(M,J,\theta)$, which sit
in the middle slot $\Lambda^{m/2}\Co^m$ of   $\Delta_{2m}$ with respect to $J$. 
With respect to the mutation $I$ these spinors are in the exremals 
$\Lambda^0\Co^m\oplus \Lambda^m\Co^m$. The spinors are not $\nabla^\bt$-constant 
in direction of the Reeb vector $\xi$, and the holonomy algebra of the Tanaka-Webster 
connection properly contains  $\hol_x(\nabla)$.

\end{document}